\documentclass[11pt]{article}

\usepackage[nottoc,notlot,notlof]{tocbibind}

\usepackage{amsmath,amssymb,amsfonts,amsthm}
\usepackage{latexsym}
\usepackage{tikz}
\usetikzlibrary{automata,positioning}
\usetikzlibrary{chains,fit,shapes}
\usetikzlibrary{calc}
\usetikzlibrary{arrows}
\usepackage{tkz-graph}
\usetikzlibrary{positioning,calc}
\usetikzlibrary{graphs}
\usetikzlibrary{graphs.standard}
\usetikzlibrary{arrows,decorations.markings}

\newcount\nodecount
\tikzgraphsset{
  declare={subgraph N}%
  {
    [/utils/exec={\global\nodecount=0}]
    \foreach \nodetext in \tikzgraphV
    {  [/utils/exec={\global\advance\nodecount by1}, 
      parse/.expand once={\the\nodecount/\nodetext}] }
  },
  declare={subgraph C}%
  {
    [cycle, /utils/exec={\global\nodecount=0}]
    \foreach \nodetext in \tikzgraphV
    {  [/utils/exec={\global\advance\nodecount by1}, 
      parse/.expand once={\the\nodecount/\nodetext}] }
  }
}

\newtheorem{definition}{Definition}
\newtheorem{theorem}{Theorem}

\newtheorem{remark}[theorem]{Remark}
\newtheorem{proposition}[theorem]{Proposition}
\newtheorem{corollary}[theorem]{Corollary}

\newcommand\p{\circle*{0.3}}

\title{A Comprehensive Introduction to the Theory of Word-Representable Graphs}

\author{Sergey Kitaev}

\begin{document}  

\maketitle

\abstract{Letters $x$ and $y$ alternate in a word $w$ if after deleting in $w$ all letters but the copies of $x$ and $y$ we either obtain a word $xyxy\cdots$ (of even or odd length) or a word $yxyx\cdots$ (of even or odd length). A graph $G=(V,E)$ is word-representable if and only if there exists a word $w$
over the alphabet $V$ such that letters $x$ and $y$ alternate in $w$ if and only if $xy\in E$.

Word-representable graphs generalize several important classes of graphs such as circle graphs, $3$-colorable graphs and comparability graphs. This paper offers a comprehensive introduction to the theory of word-representable graphs including the most recent developments in the area.}  

\tableofcontents

\section{Introduction}\label{sec1}

The theory of word-representable graphs is a young but very promising research area.  It was introduced by the author in 2004 based on the joint research with Steven Seif \cite{KS08} on the celebrated {\em Perkins semigroup}, which has played a central role in semigroup theory since 1960, particularly as a source of examples and counterexamples. However, the first systematic study of word-representable graphs was not undertaken until the appearance in 2008 of the paper \cite{KP08} by the author and Artem Pyatkin, which started the development of the theory. One of the most significant contributors to the area is Magn\'us M. Halld\'orsson.

Up to date, nearly 20 papers have been written on the subject, and the core of the book \cite{KL15} by the author and Vadim Lozin is devoted to the theory of word-representable graphs. It should also be mentioned that the software produced by Marc Glen \cite{G} is often of great help in dealing with word-representation of graphs.

We refer the Reader to \cite{KL15}, where relevance of word-representable graphs to various fields is explained, thus providing a motivation to study the graphs. These fields are algebra, graph theory, computer science, combinatorics on words, and scheduling. In particular, word-representable graphs are important from graph-theoretical point of view, since they generalize several fundamental classes of graphs (e.g.\ {\em circle graphs}, {\em $3$-colorable graphs} and {\em comparability graphs}).

A graph $G=(V,E)$ is {\em word-representable} if and only if there exists a word $w$
over the alphabet $V$ such that letters $x$ and $y$, $x\neq y$, alternate in $w$ if and only if $xy\in E$ (see Section~\ref{sec2} for the definition of alternating letters). Natural questions to ask about word-representable graphs are:

\begin{itemize}
\item Are all graphs word-representable?
\item If not, how do we characterize word-representable graphs?
\item How many word-representable graphs are there?
\item What is graph's representation number for a given graph? Essentially, what is the minimal length of a word-representant?
\item How hard is it to decide whether a graph is word-representable or not? (complexity) 
\item Which graph operations preserve (non-)word-representability?  
\item Which graphs are word-representable in your favourite class of graphs?  
\end{itemize}

This paper offers a comprehensive introduction to the theory of word-representable graphs. Even though the paper is based on the book~\cite{KL15} following some of its structure, our exposition goes far beyond book's content and it reflects the most recent developments in the area. Having said that, there is a relevant topic on a generalization of the theory of word-representable graphs \cite{JKPR15} that is discussed in \cite[Chapter 6]{KL15}, but we do not discuss it at all.

In this paper we do not include the majority of proofs due to space limitations (while still giving some proofs, or ideas of proofs whenever possible). Also, all graphs we deal with are simple (no loops or multiple edges are allowed), and unless otherwise specified, our graphs are unoriented.

\section{Word-Representable Graphs. The Basics}\label{sec2}

Suppose that $w$ is a word over some alphabet and $x$ and $y$ are two distinct letters in $w$. We say that $x$ and $y$ {\em alternate} in $w$ if after deleting in $w$ {\em all} letters {\em but} the copies of $x$ and $y$ we either obtain a word $xyxy\cdots$ (of even or odd length) or a word $yxyx\cdots$ (of even or odd length). For example, in the word 23125413241362, the letters 2 and 3 alternate. So do the letters 5 and 6, while the letters 1 and 3 do {\em not} alternate.  

\begin{definition}\label{wrg-def} A graph $G=(V,E)$ is {\em word-representable} if and only if there exists a word $w$
over the alphabet $V$ such that letters $x$ and $y$, $x\neq y$, alternate in $w$ if and only if $xy\in E$. (By definition, $w$ {\em must} contain {\em each} letter in $V$.) We say that $w$ {\em represents} $G$, and that $w$ is a {\em word-representant}. \end{definition}

Definition~\ref{wrg-def} works for both vertex-labeled and unlabeled graphs because any labeling of a graph $G$ is equivalent to any other labeling of $G$ with respect to word-representability (indeed, the letters of a word $w$ representing $G$ can always be renamed). For example, the graph to the left in Figure~\ref{wrg-ex} is word-representable because its labeled version to the right in Figure~\ref{wrg-ex} can be represented by 1213423. For another example, each {\em complete graph} $K_n$ can be represented by any permutation $\pi$ of $\{1,2,\ldots,n\}$, or by $\pi$ concatenated any number of times.   Also, the {\em empty graph} $E_n$ (also known as {\em edgeless graph}, or {\em null graph}) on vertices $\{1,2,\ldots,n\}$ can be represented by $12\cdots (n-1)nn(n-1)\cdots 21$, or by any other permutation concatenated with the same permutation written in the reverse order.

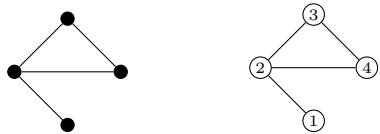
\begin{figure}[h]
\begin{center}
\begin{tabular}{ccc}
\begin{tikzpicture}[node distance=1cm,auto,main node/.style={fill,circle,draw,inner sep=0pt,minimum size=5pt}]

\node[main node] (1) {};
\node[main node] (2) [below left of=1] {};
\node[main node] (3) [below right of=1] {};
\node[main node] (4) [below right of=2] {};

\path
(1) edge (2)
(1) edge (3);

\path
(2) edge (3)
(2) edge (4);
\end{tikzpicture}

& 

\ \ \ \ \ \ \

&

\begin{tikzpicture}[node distance=1cm,auto,main node/.style={circle,draw,inner sep=1pt,minimum size=2pt}]

\node[main node] (1) {{\tiny 3}};
\node[main node] (2) [below left of=1] {{\tiny 2}};
\node[main node] (3) [below right of=1] {{\tiny 4}};
\node[main node] (4) [below right of=2] {{\tiny 1}};

\path
(1) edge (2)
(1) edge (3);

\path
(2) edge (3)
(2) edge (4);

\end{tikzpicture}

\end{tabular}
\end{center}
\vspace{-5mm}
\caption{An example of a word-representable graph}\label{wrg-ex}
\end{figure}

\begin{remark}\label{hereditary} The class of word-representable graphs is {\em hereditary}. That is, removing a vertex $v$ in a word-representable graph $G$ results in a word-representable graph $G'$. Indeed, if $w$ represents $G$ then $w$ with $v$ removed represents $G'$. This observation is crucial, e.g.\ in finding asymptotics for the number of word-representable graphs~\cite{CKL17}, which is the only known enumerative result on word-representable graphs to be stated next. \end{remark}
 
\begin{theorem}[\cite{CKL17}] The number of non-isomorphic word-representable graphs on $n$ vertices is given by $2^{\frac{n^2}{3}+o(n^2)}$.\end{theorem}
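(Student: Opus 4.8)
The plan is to read the asymptotics off the \emph{speed} of the hereditary class of word-representable graphs, using the classical theorem of Alekseev and of Bollob\'as--Thomason: for any hereditary graph property $P$, the number of labeled $n$-vertex graphs in $P$ equals $2^{(1-1/r(P))\binom{n}{2}+o(n^2)}$, where the coloring number (index) $r(P)$ is the largest value of $a+b$ for which every graph whose vertex set can be partitioned into $a$ cliques and $b$ independent sets (with arbitrary edges between the parts) belongs to $P$. Write $\mathcal{D}(a,b)$ for this last class. By Remark~\ref{hereditary} the word-representable graphs form a hereditary class, so it suffices to prove that its index equals $3$: then the exponent becomes $(1-\tfrac13)\binom{n}{2}=\tfrac{n^2}{3}+o(n^2)$. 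Finally, passing from labeled to non-isomorphic graphs changes the count by a factor at most $n!=2^{o(n^2)}$, which is absorbed into the error term, so the same asymptotics hold for non-isomorphic graphs.

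For the bound $r\ge 3$ I would take $a=0$, $b=3$. The class $\mathcal{D}(0,3)$ is exactly the class of $3$-colorable graphs, and every $3$-colorable graph is word-representable --- this is precisely the sense in which word-representable graphs generalize $3$-colorable graphs. Hence $\mathcal{D}(0,3)$ is contained in the word-representable graphs and $r\ge 3$. (Equivalently, and without invoking Alekseev--Bollob\'as--Thomason for this direction, one can count directly: fixing a balanced tripartition and choosing the edges between parts arbitrarily already yields $2^{n^2/3}$ labeled $3$-colorable graphs, all word-representable, and dividing by $n!$ leaves $2^{n^2/3-o(n^2)}$ non-isomorphic ones.)

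For the bound $r\le 3$ I must show that no class $\mathcal{D}(a,b)$ with $a+b=4$ is contained in the word-representable graphs; since the class is hereditary, it suffices to exhibit, for each such $(a,b)$, a single non-word-representable graph lying in $\mathcal{D}(a,b)$. Pleasingly, one graph settles all five cases. Let $W_5$ be the wheel obtained from the $5$-cycle $c_1c_2c_3c_4c_5$ by adding an apex $h$ adjacent to every $c_i$; it is non-word-representable (indeed the smallest such graph), and it is remarkably flexible with respect to these partitions. Its chromatic number is $4$, so $W_5\in\mathcal{D}(0,4)$; its clique-cover number is $3$ (e.g.\ $\{h,c_1,c_2\},\{c_3,c_4\},\{c_5\}$, and two triangles cannot cover it since every triangle must use $h$), so $W_5\in\mathcal{D}(3,0)\subseteq\mathcal{D}(3,1)\cap\mathcal{D}(4,0)$; and $\{h,c_1\}$ is a clique whose removal leaves the path $c_2c_3c_4c_5$, which splits into the independent sets $\{c_2,c_4\}$ and $\{c_3,c_5\}$, so $W_5\in\mathcal{D}(1,2)\subseteq\mathcal{D}(1,3)\cap\mathcal{D}(2,2)$. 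Thus $W_5\in\mathcal{D}(a,b)$ for every $(a,b)$ with $a+b=4$, none of these classes lies inside the word-representable graphs, and hence $r<4$. Together with the previous paragraph this gives $r=3$.

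The main obstacle is supplying the two structural inputs that drive the machine. The lower-bound input --- that every $3$-colorable graph is word-representable --- is itself a theorem, proved by turning a proper $3$-coloring into an explicit word-representant. The upper-bound input --- that $W_5$ is \emph{not} word-representable --- is where the real work sits: the cleanest route is the characterization of word-representable graphs as exactly those admitting a \emph{semi-transitive orientation}, after which one checks that every acyclic orientation of $W_5$ contains a forbidden ``shortcut,'' so that no semi-transitive orientation exists. I expect this non-representability verification, rather than the enumerative bookkeeping, to be the crux; once it is in hand, the observation that the single graph $W_5$ realizes every partition type with $a+b=4$ makes the bound $r\le 3$ on the index immediate.
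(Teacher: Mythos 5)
The survey itself states this theorem without proof, citing \cite{CKL17}, and your proposal is correct and reconstructs essentially the argument of that cited source: apply the Alekseev--Bollob\'as--Thomason speed theorem to the hereditary class of word-representable graphs (Remark~\ref{hereditary}), pinning the index at $3$ from below via word-representability of $3$-colorable graphs (Theorem~\ref{3-col-thm}) and from above by exhibiting the non-word-representable wheel $W_5$ inside every class of graphs partitionable into $a$ cliques and $b$ independent sets with $a+b=4$. Your verifications all check out --- $W_5$ is $4$-chromatic, is covered by the cliques $\{h,c_1,c_2\}$, $\{c_3,c_4\}$, $\{c_5\}$, and splits into the clique $\{h,c_1\}$ plus the bipartite path $c_2c_3c_4c_5$, while $n!=2^{o(n^2)}$ handles the passage from labeled to non-isomorphic counts; the only simplification worth noting is that $W_5$'s non-word-representability requires no orientation analysis, since it follows directly from Theorem~\ref{perm-versus-wr} and the fact that $C_5$ is not a comparability graph.
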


\subsection{$k$-Representability and Graph's Representation Number}

A word $w$ is {\em $k$-uniform} if each letter in $w$ occurs $k$ times. For example, 243321442311 is a 3-uniform word, while 23154 is a 1-uniform word (a permutation). 

\begin{definition}\label{k-repr-def} A graph $G$ is {\em $k$-word-representable}, or {\em $k$-representable} for brevity, if there exists a $k$-uniform word $w$ representing it. We say that $w$ {\em $k$-represents}  $G$. \end{definition}

The following result establishes equivalence of Definitions~\ref{wrg-def} and~\ref{k-repr-def}.

\begin{theorem}[\cite{KP08}]\label{equiv-thm} A graph is word-representable if and only if it is $k$-representable for some $k$. \end{theorem}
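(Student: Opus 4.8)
The plan is to prove the two directions separately; one is immediate and the other carries all the content. If $G$ is $k$-representable, then a $k$-uniform word representing $G$ is in particular a word representing $G$ in the sense of Definition~\ref{wrg-def}, so $G$ is word-representable. The real work is the converse: starting from an arbitrary word $w$ representing $G$, in which different letters may occur different numbers of times, I want to manufacture a $k$-uniform representant. Writing $|w|_x$ for the number of occurrences of a letter $x$ in $w$, I would take $k=\max_{x\in V}|w|_x$ and try to ``top up'' every \emph{deficient} letter (one with $|w|_x<k$) to exactly $k$ occurrences without disturbing which pairs alternate.

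The engine is a single local move that I call \emph{front-insertion}. First I would establish the Front-Insertion Lemma: if $x$ is a letter such that every neighbor $y$ of $x$ in $G$ has its first occurrence in $w$ strictly before the first occurrence of $x$, then the word $xw$ still represents $G$. The proof is a pairwise check. For a neighbor $y$, the letters $x$ and $y$ alternate and the restriction of $w$ to $\{x,y\}$ is alternating beginning with $y$, so prepending one $x$ turns $yxyx\cdots$ into $xyxyx\cdots$, which is still alternating. For a non-neighbor $z$ the restriction to $\{x,z\}$ is already non-alternating, and prepending a single letter cannot remove an internal repeated adjacency, so it stays non-alternating; pairs not involving $x$ are untouched. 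Crucially, $xw$ has one more occurrence of $x$ and the same number of every other letter.

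The subtle point, and the step I expect to be the main obstacle, is guaranteeing that a deficient letter to which the Front-Insertion Lemma applies always exists, since inserting a copy next to an alternating pattern can easily create a forbidden $xx$ and break an edge. This is resolved by a counting observation: if $xy$ is an edge, then $x$ and $y$ alternate, so the restriction is $xyxy\cdots$ or $yxyx\cdots$, and in either case the endpoint whose first occurrence comes earlier occurs at least as many times as the other. Hence, along any edge, the endpoint with strictly fewer occurrences is the one appearing later. Now let $x_0$ be a deficient letter whose first occurrence is latest among all deficient letters. Any neighbor of $x_0$ with a later first occurrence would, by the counting observation, have at most $|w|_{x_0}<k$ occurrences and thus be deficient too, contradicting the choice of $x_0$; so every neighbor of $x_0$ occurs earlier, and the Front-Insertion Lemma applies to $x_0$.

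Finally I would close the argument by induction on the total deficiency $D=\sum_{x\in V}(k-|w|_x)$. When $D=0$ the word is already $k$-uniform, so $G$ is $k$-representable. When $D>0$, front-inserting the letter $x_0$ above raises $|w|_{x_0}$ by one while leaving $k$ and every other count unchanged (the new count is at most $k$ since $x_0$ was deficient), so $D$ drops by exactly one and the word still represents $G$. Iterating reaches a $k$-uniform representant. The only delicate ingredient is the interplay between first-occurrence order and multiplicities captured by the counting observation; once that is in place, the induction is routine.
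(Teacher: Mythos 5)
Your proof is correct and is essentially the paper's own construction: the paper prepends, in a single step, the \emph{initial permutation} $p(w)$ of all deficient letters taken in first-occurrence order, and one full round of your front-insertions (processing deficient letters from latest first occurrence to earliest) reconstructs exactly the word $p(w)w$, after which both arguments iterate until uniformity. The only difference is granularity: you decompose the move into single-letter insertions and supply the pairwise alternation checks and the counting observation justifying correctness, details which the paper illustrates only by example and defers to \cite{KP08}.
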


\begin{proof} Clearly, $k$-representability implies word-representability. For the other direction, we demonstrate on an example how to extend a word-representant to a uniform word representing the same graph. We refer to \cite{KP08} for a precise description of the extending algorithm, and an argument justifying it. 

Consider the word $w=3412132154$ representing a graph $G$ on five vertices. Ignore the letter 1 occurring the maximum number of times (in general, there could be several such letters all of which need to be ignored) and consider the {\em initial permutation} $p(w)$ of $w$ formed by the remaining letters, that is, $p(w)$ records the order of occurrences of the leftmost copies of the letters. For our example, $p(w)=3425$. Then the word $p(w)w=34253412132154$ also represents $G$, but it contains more occurrences of the letters occurring not maximum number of time in $w$. This process can be repeated a number of times until each letter occurs the same number of times. In our example, we need to apply the process one more time by appending 5, the initial permutation of $p(w)w$, to the left of $p(w)w$ to obtain a uniform representation of $G$:  534253412132154.\end{proof}

Following the same arguments as in Theorem~\ref{equiv-thm}, one can prove the following result showing that there are {\em infinitely many} representations for any word-representable graph.

\begin{theorem}[\cite{KP08}]\label{k-implies-k-plus-1} If a graph is $k$-representable
  then it is also $(k+1)$-representable. \end{theorem}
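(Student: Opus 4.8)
The plan is to reuse the \emph{initial permutation} device from the proof of Theorem~\ref{equiv-thm}, but now applied to \emph{all} letters at once. Let $w$ be a $k$-uniform word representing $G$, and let $p=p(w)$ be its initial permutation, i.e.\ the permutation of $V$ recording the order in which the letters make their first appearance in $w$. Since $w$ is $k$-uniform and $p$ contains each letter of $V$ exactly once, the word $pw$ contains each letter exactly $k+1$ times, so $pw$ is $(k+1)$-uniform by construction. Thus it only remains to show that $pw$ represents the same graph $G$. Note that here, unlike in Theorem~\ref{equiv-thm}, we must not discard any letters when forming the initial permutation, precisely because we want to raise the multiplicity of every letter by exactly one.

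Representability is a pairwise condition, so I would fix two distinct letters $x,y\in V$ and compare their alternation in $w$ and in $pw$. Let $u$ denote the restriction of $w$ to $\{x,y\}$, and assume without loss of generality that the first occurrence of $x$ precedes the first occurrence of $y$ in $w$, so that $u$ begins with $x$. By the definition of the initial permutation, $x$ then precedes $y$ in $p$ as well, hence the restriction of $p$ to $\{x,y\}$ is exactly $xy$, and the restriction of $pw$ to $\{x,y\}$ equals $xyu$. The key point, which is what makes the whole argument work, is that the two prepended letters appear in the \emph{same} order $xy$ as the opening of $u$.

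The proof then splits into two cases according to whether $x$ and $y$ alternate in $w$. If they do, then since $u$ starts with $x$ we have $u=xyxy\cdots$, and prepending $xy$ yields $xyu=xyxyxy\cdots$, which is again alternating; so $x$ and $y$ alternate in $pw$. If they do not alternate in $w$, then $u$ contains two consecutive equal letters (a factor $xx$ or $yy$ after restriction), and prepending $xy$ cannot destroy this factor, so $xyu$ is still non-alternating and $x,y$ do not alternate in $pw$. Hence $x$ and $y$ alternate in $pw$ if and only if they alternate in $w$, i.e.\ if and only if $xy\in E$; as $x$ and $y$ were arbitrary, $pw$ represents $G$. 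I expect the only delicate step to be the bookkeeping in this case analysis, in particular verifying that the order of $x$ and $y$ in the prepended permutation always matches the first letter of $u$, which is guaranteed by the very definition of $p(w)$.
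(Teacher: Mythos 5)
Your proposal is correct and follows essentially the same route as the paper, which proves this theorem ``following the same arguments as in Theorem~\ref{equiv-thm}'': prepend the initial permutation $p(w)$ (here necessarily of \emph{all} letters, since in a $k$-uniform word every letter occurs the maximum number of times) to obtain a $(k+1)$-uniform word-representant. Your pairwise case analysis correctly supplies the justification that the paper delegates to \cite{KP08}.
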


By Theorem~\ref{equiv-thm}, the following notion is well-defined. 

\begin{definition} {\em Graph's representation number} is the {\em least} $k$ such that the graph is $k$-representable. For non-word-representable graphs (whose existence will be discussed below), we let $k=\infty$. Also, we let $\mathcal{R}(G)$ denote $G$'s representation number and $\mathcal{R}_k=\{ G : \mathcal{R}(G)=k\}$.\end{definition}

Clearly, $\mathcal{R}_1=\{ G : G \mbox{ is a complete graph}\}$. Next, we discuss 
$\mathcal{R}_2$. 

\subsection{Graphs with Representation Number 2}

We begin with discussing five particular classes of graphs having representation number 2, namely, {\em empty graphs}, {\em trees}, {\em forests}, {\em cycle graphs} and {\em ladder graphs}. Then we state a result saying that graphs with representation number 2 are exactly the class of {\em circle graphs}. 

\subsubsection{Empty Graphs}\label{empty-sec} No empty graph $E_n$ for $n\geq 2$ can be represented by a single copy of each letter, so  $\mathcal{R}(E_n)\geq 2$. On the other hand, as discussed above, $E_n$ can be represented by concatenation of two permutations, and thus $\mathcal{R}(E_n)= 2$.

\subsubsection{Trees and Forests} 
A simple inductive argument shows that {\em any tree} $T$ can be represented using two copies of each letter, and thus, if the number of vertices in $T$ is at least $3$, $\mathcal{R}(T)= 2$. Indeed, as the base case we have the edge labeled by 1 and 2 that can be 2-represented by 1212. Now, suppose that any tree on at most $n-1$ vertices can be 2-represented for $n\geq 3$, and consider a tree $T$ with $n$ vertices and with a leaf $x$ connected to a vertex $y$. Removing the leaf $x$, we obtain a tree $T'$ that can be 2-represented by a word $w_1yw_2yw_3$ where $w_1$, $w_2$ and $w_3$ are possibly empty words not containing $y$. It is now easy to see that the word $w_1yw_2xyxw_3$ 2-represents $T$ (obtained from $T'$ by inserting back the leaf $x$). Note that the word $w_1xyxw_2yw_3$ also represents $T$. 

Representing each tree in a forest by using two letters (trees on one vertex $x$ and two vertices $x$ and $y$ can be represented by $xx$ and $xyxy$, respectively) and concatenating the obtained word-representants, we see that for any forest $F$ having at least two trees, $\mathcal{R}(F)= 2$. Indeed, having two letters in a word-represent for each tree guarantees that no pair of trees will be connected by an edge.

\subsubsection{Cycle Graphs}\label{cycle-sec} 
Another class of 2-representable graphs is {\em cycle graphs}. Note that a cyclic shift of a word-representant may not represent the same graph, as is the case with, say, the word 112. However, if a word-representant is uniform, a cyclic shift does represent the same graph, which is recorded in the following proposition. 

\begin{proposition}[\cite{KP08}]\label{cyclic-shift} Let $w = uv$ be a $k$-uniform word representing a graph $G$, where $u$ and $v$ are two, possibly empty, words. Then the word $w' = vu$ also represents $G$. \end{proposition}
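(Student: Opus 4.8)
The plan is to prove that cyclically shifting a $k$-uniform word-representant preserves the represented graph by showing that the alternation status of every pair of letters is unchanged when we move a prefix $u$ to the end to form $w' = vu$ from $w = uv$. Since $w$ and $w'$ represent graphs on the same vertex set (both contain each letter exactly $k$ times by uniformity), it suffices to fix an arbitrary pair of distinct letters $x,y$ and argue that $x,y$ alternate in $w$ if and only if they alternate in $w'$.

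The key observation I would carry out is to reduce to the two-letter projection. Let $s$ denote the subword of $w$ obtained by deleting all letters other than $x$ and $y$; by $k$-uniformity, $s$ contains exactly $k$ copies of $x$ and $k$ copies of $y$, so $s$ has even length $2k$. The cyclic shift on $w$ induces a cyclic shift on this projection: the subword of $w' = vu$ formed by $x$ and $y$ is precisely $s'$, the corresponding cyclic rotation of $s$ (the copies of $x,y$ appearing in $v$ come first, followed by those in $u$). So the whole problem collapses to a statement about a word $s$ on the two-letter alphabet $\{x,y\}$: if $s$ alternates, does every cyclic shift of $s$ alternate? First I would note that a word on $\{x,y\}$ alternates exactly when it equals $(xy)^k$ or $(yx)^k$ for some $k$, i.e.\ it is a power of a length-$2$ alternating block.

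The main step is then the elementary but crucial fact that the set of alternating two-letter words $\{(xy)^k, (yx)^k\}$ is closed under cyclic shifts, which follows because these words are powers of the primitive word $xy$ (respectively $yx$), and a cyclic shift of a periodic word either returns the same word or swaps $(xy)^k \leftrightarrow (yx)^k$ — in both cases the result is still alternating. Conversely, if $x$ and $y$ do \emph{not} alternate in $w$, then $s$ contains a factor $xx$ or $yy$ at some position; under the cyclic rotation this factor is either preserved intact or, if it straddles the cut point between $v$ and $u$, it is broken apart — but in the latter case the two equal letters end up at the extreme ends of $s'$, and because $s'$ has the same multiset of letters in a genuinely non-alternating arrangement, one can check a non-alternating factor necessarily persists. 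This gives the contrapositive, completing the equivalence.

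I expect the main obstacle to be the careful bookkeeping in the converse (non-alternating) direction, specifically handling the boundary between $u$ and $v$: a repeated pair $xx$ that sits across the cut in $w$ does not appear as an adjacent factor in $s'$, so one cannot simply say ``the bad factor moves rigidly.'' The clean way around this, which I would adopt, is to avoid the factor-tracking argument entirely and instead use the characterization directly: since $w$ is $k$-uniform, alternation of $x,y$ in $w$ is equivalent to $s \in \{(xy)^k,(yx)^k\}$, and since cyclic rotation is a bijection on words that maps this two-element set into itself, $x,y$ alternate in $w$ if and only if they alternate in $w'$. Framing the whole argument through the ``power of a primitive length-$2$ word'' characterization sidesteps the boundary case and makes the proof essentially a one-line consequence of closure under rotation, with uniformity being exactly the hypothesis that guarantees the projection is balanced and hence a genuine power.
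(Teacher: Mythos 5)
Your proof is correct. Note that the paper states this proposition without proof, deferring to \cite{KP08}; your argument --- reducing to the two-letter projection, observing that the projection of $vu$ onto $\{x,y\}$ is a cyclic rotation of the projection of $uv$, and then using that rotation is a bijection mapping the set $\{(xy)^k,(yx)^k\}$ into (hence onto) itself, so that membership is preserved in both directions --- is precisely the standard argument for this fact, with $k$-uniformity entering exactly where it must, to guarantee the projection is balanced. Your middle paragraph's factor-tracking attempt for the converse is indeed flawed for the reason you identify (a repeated-letter factor straddling the cut does not move rigidly), but you correctly abandon it in favor of the bijection argument, which handles both directions at once and needs no boundary case analysis.
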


Now, to represent a cycle graph $C_n$ on $n$ vertices, one can first represent the {\em path graph} $P_n$ on $n$ vertices using the technique to represent trees, then make a 1-letter cyclic shift still representing $P_n$ by Proposition~\ref{cyclic-shift}, and swap the first two letters. This idea is demonstrated for the graph in Figure~\ref{cycle-rep} as follows. The steps in representing the path graph $P_6$ obtained by removing the edge 16 from $C_6$ are
$$1212\rightarrow 121323 \rightarrow 12132434 \rightarrow 1213243545 \rightarrow 121324354656.$$
The 1-letter cyclic shift gives the word $612132435465$ still representing $P_6$ by Proposition~\ref{cyclic-shift}, and swapping the first two letters gives the sought representation of $C_6$: $162132435465$.  

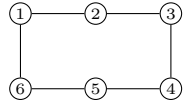
\begin{figure}[h]
\begin{center}
\begin{tikzpicture}[node distance=1cm,auto,main node/.style={circle,draw,inner sep=1pt,minimum size=2pt}]

\node[main node] (1) {{\tiny 1}};
\node[main node] (2) [right of=1] {{\tiny 2}};
\node[main node] (3) [right of=2] {{\tiny 3}};
\node[main node] (4) [below of=3] {{\tiny 4}};
\node[main node] (5) [left of=4] {{\tiny 5}};
\node[main node] (6) [left of=5] {{\tiny 6}};

\path
(1) edge (2)
(2) edge (3)
(3) edge (4)
(4) edge (5)
(5) edge (6)
(6) edge (1);

\end{tikzpicture}

\end{center}
\vspace{-5mm}
\caption{Cycle graph $C_6$}\label{cycle-rep}
\end{figure}

\subsubsection{Ladder Graphs} 

The ladder graph $L_n$ with $2n$ vertices, labeled $1,\ldots, n, 1',\ldots,n'$, and $3n -2$ edges is constructed following the pattern for $n=4$ presented in Figure~\ref{ladder-pic}. An inductive argument given in \cite{K13} shows that for $n\geq 2$, $\mathcal{R}(L_n)=2$. Table~\ref{ex-2-repr-Ln} gives 2-representations of $L_n$ for $n=1,2,3,4$. 

\begin{figure}[h]
\begin{center}
\begin{tikzpicture}[node distance=1cm,auto,main node/.style={circle,draw,inner sep=2pt}]

\node[main node] (1) {{\tiny 1}};
\node[main node] (2) [right of=1] {{\tiny 2}};
\node[main node] (3) [right of=2] {{\tiny 3}};
\node[main node] (4) [below of=3] {{\tiny $3'$}};
\node[main node] (5) [left of=4] {{\tiny $2'$}};
\node[main node] (6) [left of=5] {{\tiny $1'$}};
\node[main node] (7) [right of=3] {{\tiny 4}};
\node[main node] (8) [right of=4] {{\tiny $4'$}};

\path
(2) edge (5)
(1) edge (2)
(2) edge (3)
(3) edge (4)
(3) edge (7)
(4) edge (5)
(4) edge (8)
(7) edge (8)
(5) edge (6)
(6) edge (1);

\end{tikzpicture}

\end{center}
\vspace{-5mm}
\caption{The ladder graph $L_4$}\label{ladder-pic}
\end{figure}
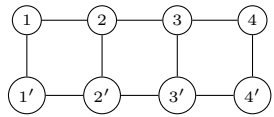

\begin{table}
\begin{center}
\begin{tabular}{c|c}
$n$ & $2$-representation of the ladder graph $L_n$ \\
\hline
1 &  $1{\bf 1'1}1'$\\
\hline
2 &  $1'21 {\bf 2'2} 1'2'1$\\
\hline
3 & $12'1'32 {\bf 3'3} 2'3'121'$\\ 
\hline
4 & $1'213'2'43 {\bf 4'4} 3'4'231'2'1$\\
\hline
\end{tabular}
\caption{$2$-representations of the ladder\index{ladder graph} graph $L_n$ for $n=1,2,3,4$}\label{ex-2-repr-Ln}
\end{center}
\end{table}

\subsubsection{Circle Graphs} 
A {\em circle graph} is the {\em intersection graph} of a set of chords of a circle. That is, it is an unoriented graph whose vertices can be associated with chords of a circle such that two vertices are adjacent if and only if the corresponding chords cross each other. See Figure~\ref{circle-graph-pic} for an example of a circle graph on four vertices and its associated chords. 

The following theorem provides a complete characterization of $\mathcal{R}_2$.

\begin{figure}[h]
\begin{center}

\begin{tabular}{ccc}

%
%
%
%
%

\begin{tikzpicture}[node distance=1cm,auto,main node/.style={circle,draw,inner sep=1pt,minimum size=2pt}]

\node[main node] (1) {{\tiny 3}};
\node[main node] (2) [below left of=1] {{\tiny 1}};
\node[main node] (3) [below right of=1] {{\tiny 2}};
\node[main node] (4) [below right of=2] {{\tiny 4}};

\path
(1) edge (2)
(1) edge (3);

\end{tikzpicture}

&

\ \ \ \ \ \ \

&

\begin{tikzpicture}[node distance=1cm,auto,main node/.style={circle,draw,inner sep=1pt,minimum size=2pt}]

 \node[draw,circle, minimum size=2cm,]  (a) {};
\foreach \x in {0,...,7}
\node  at (45*\x:1cm) [fill=white,circle, minimum size=0.015cm]{};
 
 \node  at (a.0) [draw,circle,inner sep=1pt, minimum size=2pt](a0) {\tiny{$3$}};
\node  at (a.45) [draw,circle,inner sep=1pt, minimum size=2pt](a1) {\tiny{$2$}};

  \node  at (a.90) [draw,circle,inner sep=1pt, minimum size=2pt](a2) {\tiny{$1$}};

\node  at (a.135) [draw,circle,inner sep=1pt, minimum size=2pt](a3) {\tiny{$4$}};

 \node  at (a.180) [draw,circle,inner sep=1pt, minimum size=2pt](a4) {\tiny{$4$}};

  \node  at (a.225) [draw,circle,inner sep=1pt, minimum size=2pt](a5) {\tiny{$3$}};

   \node  at (a.270) [draw,circle,inner sep=1pt, minimum size=2pt](a6) {\tiny{$1$}};

    \node  at (a.315) [draw,circle,inner sep=1pt, minimum size=2pt](a7) {\tiny{$2$}};
    \draw (a3)--(a4);

   \draw (a0)--(a5);
   \draw (a6)--(a2);
    \draw (a1)--(a7);
\end{tikzpicture}

\end{tabular}

\end{center}
\vspace{-5mm}
\caption{A circle graph on four vertices and its associated chords}\label{circle-graph-pic}
\end{figure}
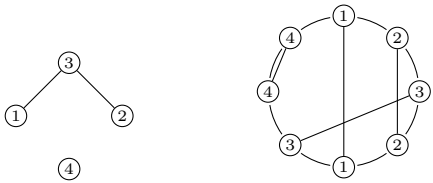

\begin{theorem}\label{circle-gr-thm} We have 
$$\mathcal{R}_2=\{ G : G \mbox{ is a circle graph different from a complete graph}\}.$$ \end{theorem}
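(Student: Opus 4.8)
The plan is to prove the stronger fact that a graph is $2$-representable if and only if it is a circle graph, and then to strip off the complete graphs using the already-established equality $\mathcal{R}_1=\{G:G\text{ is complete}\}$. The heart of the argument is a direct dictionary between $2$-uniform words and chord diagrams of a circle.

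First I would set up the correspondence. Given a $2$-uniform word $w=a_1a_2\cdots a_{2n}$ over $V$ (so each letter of $V$ occurs exactly twice), place the $2n$ positions $1,2,\ldots,2n$ as points in this cyclic order around a circle, and for each letter $x\in V$ draw the chord joining the two points at which $x$ occurs. Conversely, any chord diagram yields a $2$-uniform word by reading the chord labels around the circle starting from an arbitrary point. Because Proposition~\ref{cyclic-shift} guarantees that cyclic shifts of a uniform word represent the same graph, the choice of starting point is immaterial, and the two constructions are mutually inverse up to rotation of the circle.

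Next I would verify that this dictionary translates alternation into crossing. For a $2$-uniform word, deleting all letters but $x$ and $y$ leaves exactly two copies of each, so the outcome is one of the six arrangements $xyxy$, $yxyx$, $xxyy$, $yyxx$, $xyyx$, $yxxy$. Exactly the first two are alternating, and these are precisely the arrangements in which the four endpoints interleave around the circle, i.e.\ the arrangements in which the chords of $x$ and $y$ cross; the remaining four give chords that are either separated or nested, hence disjoint. Thus $x$ and $y$ alternate in $w$ if and only if their chords cross, so $w$ represents $G$ if and only if $G$ is the circle graph of the associated chord diagram. This shows that the $2$-representable graphs are exactly the circle graphs.

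Finally I would assemble the statement. By definition $\mathcal{R}(G)=2$ means that $G$ is $2$-representable but not $1$-representable; by the previous paragraph the first condition says that $G$ is a circle graph, and since $\mathcal{R}_1$ consists precisely of the complete graphs, the second condition says that $G$ is not complete. (Every complete graph is itself a circle graph and, being $1$-representable, is $2$-representable by Theorem~\ref{k-implies-k-plus-1}, so the complete graphs genuinely sit inside the circle graphs and must be excised.) The step requiring the most care is the cyclic-versus-linear discrepancy: a word is linear whereas a chord diagram is rotationally symmetric, and it is exactly Proposition~\ref{cyclic-shift} that makes the passage between the two well defined, ensuring that reading a chord diagram off at different starting points never alters the represented graph.
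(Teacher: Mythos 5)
Your proof is correct and takes essentially the same approach as the paper's: the dictionary between $2$-uniform words and chord diagrams read around the circle, with alternation of two letters corresponding exactly to crossing of the two chords. You merely make explicit what the paper leaves implicit (the converse direction, the six-arrangement case check, and the excision of complete graphs via $\mathcal{R}_1$), but the underlying argument is identical.
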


\begin{proof} Given a circle graph $G$, consider its representation on a circle by intersecting chords. Starting from any chord's endpoint, go through all the endpoints in clock-wise direction recording chords' labels. The obtained word $w$ is 2-uniform and it has the property that a pair of letter $x$ and $y$ alternate in $w$ if and only if the pair of chords labeled by $x$ and $y$ intersect, which happens if and only if the vertex $x$ is connected to the vertex $y$ in $G$. For the graph in Figure~\ref{circle-graph-pic}, the chords' labels can be read starting from the lower 1 as 13441232, which is a 2-uniform word representing the graph. Thus, $G$ is a circle graph if and only if $G\in \mathcal{R}_2$ with the only exception if $G$ is a complete graph, in which case $G\in \mathcal{R}_1$. \end{proof}

\subsection{Graphs with Representation Number 3}

Unlike the case of graphs with representation number 2, no characterization of graphs with representation number 3 is know. However, there is a number of interesting results on this class of graphs to be discussed next. 

\subsubsection{The Petersen Graph}\label{Petersen-sec}

In 2010, Alexander Konovalov and Steven Linton not only showed that the Petersen graph in Figure~\ref{Petersen-graph} is not 2-representable, but also provided two {\em non-equivalent} (up to renaming letters or a cyclic shift) 3-representations of it:
\begin{itemize}
\item 1387296(10)7493541283(10)7685(10)194562 and 
\item 134(10)58679(10)273412835(10)6819726495.
\end{itemize}

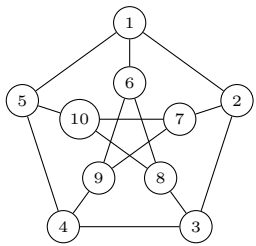
\begin{figure}
\begin{center}
\begin{tiny}
\begin{tikzpicture}[every node/.style={draw,circle}]
  \graph [clockwise,math nodes] {     
    subgraph C [V={ {1}, {2}, {3}, {4}, {5} }, name=A, radius=1.5cm]; 
    subgraph N [V={ {6}, {7}, {8}, {9}, {10} }, name=B, radius=0.7cm];
    \foreach \i [evaluate={\j=int(mod(\i+1,5)+1);}] in {1,...,5}{
      A \i -- B \i;  
      B \i -- B \j;
    }
  }; 
\end{tikzpicture}
\end{tiny}
\end{center}
\vspace{-5mm}
\caption{The Petersen graph}\label{Petersen-graph}
\end{figure}

The fact that the Petersen graph does not belong to $\mathcal{R}_2$ is also justified by the following theorem. 

\begin{theorem}[\cite{HKP10}] Petersen's graph is {\em not} $2$-representable. \end{theorem}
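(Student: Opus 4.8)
The plan is to reduce the statement to a question about circle graphs and then rule out a chord realization. By Theorem~\ref{circle-gr-thm} and the correspondence exhibited in its proof, a $2$-uniform word over $V$ is precisely the datum of a chord diagram on $|V|$ chords, and a graph is $2$-representable if and only if it is a circle graph (complete graphs being circle graphs as well). Since the Petersen graph is not complete, it suffices to show that it is \emph{not} a circle graph, i.e.\ that no system of $10$ chords on a circle has crossing graph equal to the Petersen graph.

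First I would set up the chord picture. Assume for contradiction that such a diagram exists; by Proposition~\ref{cyclic-shift} we may read it cyclically. Each chord $v$ splits the circle into two arcs, and a chord $u$ crosses $v$ if and only if $u$ has exactly one endpoint in each arc. I would exploit the rigid local structure of the Petersen graph: it is $3$-regular, triangle-free and of girth $5$, so for every vertex $v$ its three neighbours are pairwise non-adjacent. Hence, in the diagram, the three neighbour-chords of $v$ each cross $v$ yet pairwise do not cross, which forces them to be nested or separated within the two arcs determined by $v$ in a very constrained way.

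Next I would propagate these constraints one step further. Each neighbour $u$ of $v$ has two further neighbours, none of which is adjacent to $v$ (girth $5$), and which together with $v$'s neighbourhood form the characteristic $5$-cycle/matching structure of the Petersen graph. Translating each such adjacency and non-adjacency into a ``crosses''/``does not cross'' condition on endpoints relative to the already-placed chords should over-determine the cyclic order of endpoints and yield a contradiction. To keep the casework finite I would quotient by symmetry: the Petersen graph is vertex- and arc-transitive, so I may fix the chord $v$ and the cyclic position of one neighbour, normalising the diagram before the case analysis.

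The main obstacle is the combinatorial explosion of possible nestings: even after normalisation there are many ways to interleave the endpoints of the second neighbourhood, and the argument must show that each leads to a forbidden crossing pattern. A cleaner alternative, which I would pursue in parallel, is to use that the class of circle graphs is closed under vertex deletion and under \emph{local complementation} (which corresponds to a natural involution on chord diagrams). One then only needs to reduce the Petersen graph, by a short sequence of local complementations and deletions, to a graph already known not to be a circle graph (for instance one of Bouchet's obstructions such as $W_5$); since circle graphs are preserved by these operations, the Petersen graph cannot be a circle graph either. The remaining work is to exhibit and verify that reduction sequence, which is finite and checkable.
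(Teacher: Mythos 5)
Your reduction is legitimate: by Theorem~\ref{circle-gr-thm} (together with the fact that complete graphs are circle graphs and lie in $\mathcal{R}_1$), a graph is $2$-representable precisely when it is a circle graph, so the statement is indeed equivalent to ``the Petersen graph is not a circle graph.'' The problem is that you never actually prove that equivalent statement; everything after the reduction is a plan rather than an argument. Your first branch (propagating crossing/nesting constraints from a vertex's neighbourhood outward) ends exactly where the real work begins --- you yourself flag the ``combinatorial explosion of possible nestings'' and do not carry out, or even bound, the case analysis. Your second branch invokes machinery entirely outside the paper: closure of circle graphs under local complementation and Bouchet's obstruction list ($W_5$, $W_7$, $BW_3$ as vertex-minors). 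Even granting those results, the proof still requires exhibiting a concrete sequence of local complementations and deletions taking the Petersen graph to an obstruction, and you do not produce it. So in both branches the essential finite verification is missing; as it stands this is a proof strategy, not a proof.

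For comparison, the paper's proof is short, self-contained, and works directly on a hypothetical $2$-uniform word $w$: pick a letter $x$ whose two copies have the minimal number of letters between them; $3$-regularity forces exactly three pairwise distinct, pairwise non-adjacent letters between the two copies of $x$; vertex- and edge-transitivity (plus Proposition~\ref{cyclic-shift}) normalise $w$ to the form $12561w_16w_25w_32w_4$; then the alternation requirements force $8$ to occur in $w_1$ and $w_2$ and $3$ to occur in $w_3$ and $w_4$, producing the subsequence $8833$, which contradicts the adjacency of $8$ and $3$. If you want to salvage your approach, the same style of normalisation (fix a chord, use transitivity to fix one neighbour) followed by a genuinely finite, written-out case analysis of the endpoint orderings would be needed; alternatively, translate the paper's minimal-gap argument into chord language, where ``minimal gap'' becomes a chord cutting off an arc containing the fewest endpoints.
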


\begin{proof} Suppose that the graph is 2-representable and $w$ is a 2-uniform word representing it. Let $x$ be a letter in $w$ such that there is a minimal number of letters between the two occurrences
of $x$. Since Petersen's graph is regular of degree 3,  it is not difficult to see that there must be exactly three letters, which are all different, between the $x$s (having more letters between $x$s would lead to having two equal letters there, contradicting the choice of $x$). 

By symmetry, we can assume that $x=1$, and by
Proposition~\ref{cyclic-shift} we can assume that $w$ starts with 1. So,
the letters 2, 5 and 6 are between the two 1s, and because of symmetry,
the fact that Petersen's graph is {\em edge-transitive} (that is, each of
its edges can be made ``internal''), and taking into account that
the vertices 2, 5 and 6 are pairwise non-adjacent, we can assume that
$w=12561w_16w_25w_32w_4$ where the $w_i$s are some, possibly empty words for
$i\in\{1,2,3,4\}$. To alternate with 6 but not to alternate with 5, the letter 8 must occur in 
$w_1$ and $w_2$. Also, to alternate with 2 but
not to alternate with 5, the letter 3 must occur in $w_3$ and $w_4$. But
then 8833 is a subsequence in $w$, and thus 8 and 3 must be non-adjacent
in the graph, a contradiction. \end{proof}

\subsubsection{Prisms}\label{prisms-sec}

A {\em prism} $\mbox{Pr}_n$ is a graph consisting of two cycles $12\cdots n$ and $1'2'\cdots n'$, where $n\geq 3$, connected by the edges $ii'$ for $i=1,\ldots ,n$. In particular, the 3-dimensional cube to the right in Figure~\ref{prisms-3-4} is the prism $\mbox{Pr}_4$.

\begin{figure}
\begin{center}
\begin{tabular}{ccc}
\begin{tikzpicture}[node distance=1cm,auto,main node/.style={fill,circle,draw,inner sep=0pt,minimum size=5pt}]

\node[main node] (1) {};
\node[main node] (2) [below left of=1] {};
\node[main node] (3) [below right of=1] {};
\node[main node] (4) [below right of=2] {};
\node[main node] (5) [below left of=4] {};
\node[main node] (6) [below right of=4] {};

\path
(1) edge (2)
(1) edge (4)
(1) edge (3)
(2) edge (3)
(2) edge (5)
(3) edge (6)
(4) edge (5)
(4) edge (6)
(5) edge (6);

\end{tikzpicture}

& 

\ \ \ \ \ \ 

&

\begin{tikzpicture}[node distance=1cm,auto,main node/.style={fill,circle,draw,inner sep=0pt,minimum size=5pt}]

\node[main node] (1) {};
\node[main node] (2) [below left of=1] {};
\node[main node] (3) [below right of=1] {};
\node[main node] (4) [below right of=2] {};
\node[main node] (5) [below left of=4] {};
\node[main node] (6) [below right of=4] {};
\node[main node] (7) [above right of=3] {};
\node[main node] (8) [above right of=6] {};

\path
(1) edge (2)
(1) edge (4)
(2) edge (3)
(2) edge (5)
(7) edge (1)
(7) edge (3)
(7) edge (8)
(3) edge (6)
(8) edge (6)
(8) edge (4)
(4) edge (5)
(5) edge (6);

\end{tikzpicture}
\end{tabular}
\end{center}
\vspace{-5mm}
\caption{Prisms $\mbox{Pr}_3$ and $\mbox{Pr}_4$}\label{prisms-3-4}
\end{figure}

\begin{theorem}[\cite{KP08}]\label{every-prism} {\em Every} prism $\mbox{Pr}_n$ is $3$-representable. \end{theorem}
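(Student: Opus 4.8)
The plan is to give an explicit $3$-uniform word representing $\mathrm{Pr}_n$ and verify that it encodes exactly the required adjacencies. Since each vertex must appear three times and the prism has $2n$ vertices, the word will have length $6n$. The key design principle is that the two letters $i$ and $j$ of the word alternate precisely when $ij$ is an edge: the outer cycle edges $i(i+1)$, the inner cycle edges $i'(i+1)'$, and the rungs $ii'$.

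First I would look for a word built from repeating blocks indexed by $i = 1, \ldots, n$, so that the cyclic symmetry of the prism is mirrored by the block structure; by Proposition~\ref{cyclic-shift} we are free to work with the word up to cyclic shifts, which is convenient for a cyclically symmetric construction. A natural attempt is to interleave the primed and unprimed letters so that within one ``period'' the pattern responsible for the rung $ii'$, the outer edge, and the inner edge all appear. Concretely, I would try a word of the shape $w = B_1 B_2 \cdots B_n \, C_1 C_2 \cdots C_n \cdots$ where each block contributes copies of $i$ and $i'$ arranged to alternate with exactly their neighbours. The guiding local constraint is that $i$ should alternate with $i-1$, $i+1$, and $i'$ but with nothing else; similarly for $i'$. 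Because alternation is determined only by the relative order of the two chosen letters after deleting all others, I can check each adjacency and each non-adjacency by restricting the candidate word to a single pair of letters.

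The verification then splits into a small number of cases by symmetry: (i) the pair $i, i'$ must alternate (rung); (ii) the consecutive outer pair $i, i+1$ and consecutive inner pair $i', (i+1)'$ must alternate (cycle edges); (iii) any ``cross'' pair such as $i, (i+1)'$ or $i, j'$ with $j \neq i$ must fail to alternate; and (iv) any non-consecutive pair on the same cycle must fail to alternate. Thanks to the block structure, checking (i)--(iv) reduces to inspecting the order of three occurrences of each of two letters, which is a finite bounded pattern independent of $n$, so a single generic computation per case suffices. The indices $n$ and $1$ wrap around, so I would treat the ``seam'' between block $n$ and block $1$ explicitly to confirm the cyclic edges $n1$ and $n'1'$ behave like any other consecutive pair, using Proposition~\ref{cyclic-shift} to rotate the seam into the interior if that makes the check cleaner.

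The main obstacle I expect is the simultaneous satisfaction of the alternation and non-alternation conditions at the boundaries between blocks: it is easy to force the intended edges to alternate while accidentally creating spurious alternations between an unprimed letter and a non-neighbouring primed letter (a ``diagonal'' of the prism), or destroying an intended inner-cycle alternation at the wrap-around. Getting a single block template whose three-fold repetition handles all $2n$ letters uniformly — rather than patching individual cases — is the delicate part. If a clean uniform block proves elusive, the fallback is to start from the $3$-representations exhibited for small prisms and prove the inductive step separately, but I would first commit to finding the explicit periodic $3$-uniform word, since its existence directly yields $\mathcal{R}(\mathrm{Pr}_n) \le 3$, and combined with the fact that prisms are not circle graphs for $n \geq 3$ (hence by Theorem~\ref{circle-gr-thm} not in $\mathcal{R}_2$) it would pin the representation number at exactly $3$.
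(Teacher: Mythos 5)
The survey you are working from does not actually prove Theorem~\ref{every-prism}; it cites \cite{KP08}, where the result is established by exhibiting an explicit $3$-uniform word and verifying its alternation properties. Your proposal correctly identifies that goal, and the surrounding scaffolding is sensible: using Proposition~\ref{cyclic-shift} to handle the wrap-around, reducing the verification to finitely many pair types, and invoking Theorem~\ref{none-of-prisms} together with Theorem~\ref{circle-gr-thm} for the (not actually required) lower bound. But the proof itself is missing: you never produce the word, nor a concrete block template, nor the fallback induction step. Since the entire content of the theorem is that construction, what you have written is a plan for a proof with the one genuinely hard step deferred --- and you acknowledge as much.

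Worse, the design you commit to provably cannot work. In your shape $w=B_1\cdots B_n\,C_1\cdots C_n\cdots$, where each block $B_i$ (respectively $C_i$, etc.) contributes one copy of $i$ and one of $i'$, each of the three rounds is a permutation of all $2n$ letters, so $w$ is a concatenation of three permutations; the graph it represents is then permutationally representable, hence a comparability graph by Theorem~\ref{Kit-Sei-thm}. But $\mbox{Pr}_3$ is not a comparability graph: a transitive orientation would make the outer triangle a transitive tournament, say $1\rightarrow 2\rightarrow 3$ and $1\rightarrow 3$; then the rung at $2$ must be oriented $2'\rightarrow 2$ (otherwise $1\rightarrow 2\rightarrow 2'$ forces the non-edge $12'$), whence $2'\rightarrow 2\rightarrow 3$ forces the non-edge $2'3$, a contradiction. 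The fully cyclic alternative --- one block per vertex, each letter occurring once in each of three consecutive blocks of a common template --- fails as well: forcing every rung $i,i'$ to alternate pins down a single primed/unprimed orientation throughout the template, and either choice of that orientation then makes one family of diagonals, $i,(i+1)'$ or $i',(i+1)$, alternate. In other words, the ``spurious alternations'' you flag as the delicate part are not a nuisance to be tuned away but an impossibility for any such symmetric design; a correct construction (such as the explicit word in \cite{KP08}) has to break this symmetry. Alternatively, your fallback of induction on $n$ would need an explicit step passing from a $3$-uniform word for $\mbox{Pr}_n$ to one for $\mbox{Pr}_{n+1}$, which you do not supply.
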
 

The fact that the triangular prism $\mbox{Pr}_3$ is not 2-representable was shown in \cite{KP08}. The following more general result holds.

\begin{theorem}[\cite{K13}]\label{none-of-prisms} {\em None} of prisms $\mbox{Pr}_n$ is $2$-representable. \end{theorem}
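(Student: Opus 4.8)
The plan is to prove by contradiction, following the same strategy that works for the Petersen graph (Theorem before it), but now leveraging the fact that $\mbox{Pr}_n$ is $3$-regular for every $n\geq 3$. Suppose $\mbox{Pr}_n$ is $2$-representable and let $w$ be a $2$-uniform word representing it. As in the Petersen argument, I would choose a letter $x$ whose two occurrences in $w$ enclose the fewest letters, and observe that this minimal ``gap'' must contain letters that are pairwise distinct (else two equal letters inside the gap would give a strictly smaller gap, contradicting minimality). Since the prism is $3$-regular, each vertex $x$ has exactly three neighbors, and every letter strictly between the two copies of $x$ must be a neighbor of $x$ (a letter appearing only once between the $x$s, with its other copy outside, alternates with $x$). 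So the gap contains at most three letters, all neighbors of $x$, all distinct.

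Next I would exploit the local structure at $x$. In $\mbox{Pr}_n$ a vertex $i$ on the outer cycle has neighbors $i-1$, $i+1$ (indices mod $n$) and $i'$; the three neighbors of any vertex are never mutually all adjacent — in fact in a prism the three neighbors of a vertex induce at most one edge (the two cycle-neighbors $i-1,i+1$ are adjacent only when $n=3$, and $i'$ is adjacent to neither of them for $n\geq 4$, while for $n=3$ the neighbors form a path, not a triangle). Using Proposition~\ref{cyclic-shift} I may cyclically shift so that $w$ begins with $x$, and by the vertex-transitivity of the prism I may take $x$ to be a specific vertex, say $1$. Writing $w = 1\,a\,b\,c\,1\,w'$ where $\{a,b,c\}$ are the three neighbors of $1$ (with the remaining copies of $a,b,c$ lying in $w'$, since they alternate with $1$), I would then track a non-neighbor of $1$ that is forced into $w'$ in a way that creates a forbidden subsequence.

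The key mechanism, exactly as in the Petersen proof, is this: if $u$ and $v$ are two of the enclosed neighbors of $x$ that are themselves non-adjacent in the graph, then to make some further vertex $t$ alternate with $u$ but fail to alternate with $v$, the letter $t$ is forced to appear in particular positions relative to the single remaining copies of $u$ and $v$. Arranging two such forced letters $s,t$ (each a neighbor of exactly one of two non-adjacent enclosed vertices) produces a subsequence of the form $sstt$ in $w$, which forces $s$ and $t$ to be non-adjacent; but the geometry of the prism will have made $s$ and $t$ adjacent, the desired contradiction. The main case split is $n=3$ versus $n\geq 4$, since the induced structure on the neighborhood of $x$ differs slightly between the triangular prism and the larger prisms; the $n=3$ case is already handled in \cite{KP08}, so the real content is packaging the $n\geq 4$ argument uniformly.

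The hard part will be handling the degenerate adjacencies cleanly as $n$ ranges over all values: for small $n$ the outer and inner cycles are short, so a ``non-neighbor'' forced into the gap-analysis might coincide with an unexpected vertex, and one must verify that the forced subsequence $sstt$ genuinely involves two vertices that are adjacent in $\mbox{Pr}_n$. I expect the cleanest route is to reduce everything to the minimal-gap vertex together with its two cycle-neighbors, show these two cycle-neighbors are non-adjacent (true precisely for $n\geq 4$, with $n=3$ cited separately), and then derive the contradiction from a single well-chosen non-neighbor, rather than attempting a full enumeration of placements of $w'$.
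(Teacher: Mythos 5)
Your setup is sound and matches the standard approach: in a $2$-uniform word the letters strictly between the two copies of a gap-minimal letter are exactly its neighbours, each occurring once (so the gap has \emph{exactly} three letters, not just ``at most three''), and after a cyclic shift (Proposition~\ref{cyclic-shift}) one may write $w=1\,a\,b\,c\,1\,w_1\,c\,w_2\,b\,w_3\,a\,w_4$ with $\{a,b,c\}=\{2,n,1'\}$, the second copies forced into reverse order because these three vertices are pairwise non-adjacent for $n\geq 4$. (For the record, the paper gives no proof of Theorem~\ref{none-of-prisms}; it only cites \cite{K13}, so your argument must stand on its own.) The genuine gap is the contradiction step, which you assert rather than prove, and which in fact fails. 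Note first that vertex-transitivity does not let you normalise the order of $a,b,c$: the stabiliser of the vertex $1$ only swaps $2\leftrightarrow n$ (prisms are not edge-transitive for $n\neq 4$, unlike the Petersen graph), so three genuinely different cases remain, according to whether the spoke $1'$ sits in position $a$, $b$ or $c$; your sketch never registers this split.

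The fatal point is the claim that ``the geometry of the prism will have made $s$ and $t$ adjacent.'' Your mechanism needs two distinct letters, each adjacent to exactly one of the enclosed vertices $\{2,n,1'\}$, that are adjacent to each other. In $\mbox{Pr}_n$ the only vertices adjacent to exactly one member of $\{2,n,1'\}$ are $3$ and $n-1$, and these are adjacent to each other only when $n=5$ (they coincide when $n=4$). So for every $n\geq 6$, in every case, the $sstt$ pattern you construct relates two \emph{non-adjacent} vertices and yields no contradiction; your worry about small $n$ is exactly backwards, since it is large $n$ that kills the mechanism. The missing idea is to use the spoke vertices $2'$ and $n'$, which are adjacent to \emph{two} of the enclosed vertices. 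If $c=1'$, then $2'\sim 1'$ and $2'\not\sim n$ force both copies of $2'$ into $w_1$ and $w_2$, hence both lie strictly between the two copies of $2$, contradicting $2'\sim 2$ -- a conflict of forcings on a single letter (the case $a=1'$ is symmetric, using $n'$). If $b=1'$, then $n'$ is forced into $w_1,w_3$ and $2'$ into $w_2,w_4$, so the word makes $n'$ and $2'$ alternate although they are non-adjacent in $\mbox{Pr}_n$ for $n\geq 4$ -- a forced-alternation contradiction. Neither of these is the $sstt$ mechanism you describe, and without them (together with the citation of \cite{KP08} for $n=3$) the proof does not go through.
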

 
Theorems~\ref{every-prism} and~\ref{none-of-prisms} show that $\mbox{Pr}_n\in \mathcal{R}_3$ for any $n\geq 3$.
 
\subsubsection{Colorability of Graphs in $\mathcal{R}_3$}
Theorem~\ref{crown-thm-main} below shows that  $\mathcal{R}_3$ does not even include $2$-colorable graphs, and thus any class of $c$-colorable graphs for $c\geq 3$. Indeed, any $c$-colorable non-3-representable graph can be extended to a $(c+1)$-colorable graph by adding an apex (all-adjacent vertex), which is still non-3-representable using the hereditary nature of word-representability (see Remark~\ref{hereditary}). 

A natural question to ask here is: Is $\mathcal{R}_3$ properly included in a class of $c$-colorable graphs for a constant $c$? A simple argument of replacing a vertex in the 3-representable triangular prism $\mbox{Pr}_3$ by a complete graph of certain size led to the following theorem.

\begin{theorem}[\cite{K13}]\label{thm-R3-c-colorable} The class  $\mathcal{R}_3$ is not included in a class of $c$-colorable graphs for some constant~$c$. \end{theorem}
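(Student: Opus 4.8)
The plan is to exhibit an explicit family of graphs $\{G_m\}_{m\geq 1}$, all lying in $\mathcal{R}_3$, whose chromatic numbers grow without bound; this immediately shows that no single constant $c$ can make every graph in $\mathcal{R}_3$ be $c$-colorable. Following the hint in the text, each $G_m$ is obtained from the triangular prism $\mbox{Pr}_3$ (with triangles $123$ and $1'2'3'$ and matching edges $ii'$) by \emph{substituting} a clique for one vertex: replace vertex $1$ by pairwise-adjacent new vertices $a_1,\dots,a_m$, and join every $a_i$ to each neighbour $2,3,1'$ of the original vertex $1$. Since the vertices $a_1,\dots,a_m$ together with $2$ and $3$ then form a clique of size $m+2$, we obtain $\chi(G_m)\geq m+2$, which is unbounded as $m\to\infty$.

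First I would show that $G_m$ is $3$-representable. By Theorem~\ref{every-prism}, $\mbox{Pr}_3$ has a $3$-uniform word-representant $w$, in which the letter $1$ occurs exactly three times. I would form a word $w'$ from $w$ by replacing each of the three occurrences of $1$ by the fixed block $a_1a_2\cdots a_m$ (the same ordering each time), leaving all other letters untouched. The verification splits into two routine checks. For a pair $a_i,a_j$ with $i<j$, the induced subsequence across the three blocks is $a_ia_ja_ia_ja_ia_j$, so $a_i$ and $a_j$ alternate and the $a_i$ form a clique. For a pair consisting of some $a_i$ and any other letter $y$, the copies of $a_i$ sit exactly where the copies of $1$ sat relative to $y$ (no $y$ lies inside a block), so $a_i$ alternates with $y$ precisely when $1$ alternated with $y$ in $w$, i.e.\ precisely when $1y$ was an edge of $\mbox{Pr}_3$. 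Hence $a_i$ is adjacent exactly to $2,3,1'$, as required, and since each $a_i$ occurs three times the word $w'$ is $3$-uniform. Thus $G_m$ is $3$-representable.

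Next I would argue that $G_m$ is \emph{not} $2$-representable, so that its representation number is exactly $3$ and $G_m\in\mathcal{R}_3$. The key observation is that deleting all copies of a letter from a $k$-uniform representant leaves a $k$-uniform representant of the graph with that vertex removed; this is the $k$-uniform refinement of the hereditary property in Remark~\ref{hereditary}. Deleting $a_2,\dots,a_m$ from $G_m$ yields exactly $\mbox{Pr}_3$ (with $a_1$ in the role of $1$), because $a_1$ retains precisely the neighbours $2,3,1'$. Since $\mbox{Pr}_3$ is not $2$-representable (as noted just before Theorem~\ref{none-of-prisms}), neither is $G_m$. As $G_m$ is not complete it is not $1$-representable either, whence $\mathcal{R}(G_m)=3$.

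Combining the three steps, every $G_m$ belongs to $\mathcal{R}_3$ while $\chi(G_m)\geq m+2$, so $\mathcal{R}_3$ is not contained in the class of $c$-colorable graphs for any constant $c$. The only point demanding genuine care is the $3$-representability step: one must check that substituting a whole block for a single letter neither creates nor destroys alternations except for the intended clique edges among $a_1,\dots,a_m$. This is exactly what the two subsequence computations above confirm, and it is where I would expect a careless argument to slip.
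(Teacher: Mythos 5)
Your proof is correct and takes essentially the same approach as the paper: replacing a vertex of the $3$-representable triangular prism $\mbox{Pr}_3$ by a clique of arbitrary size (a special case of the module substitution of Theorem~\ref{module-thm}), which keeps the representation number equal to $3$ while the clique forces an unbounded chromatic number. Your block-substitution argument and the uniform-hereditary deletion argument correctly supply the details that the paper leaves to \cite{K13}.
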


\subsubsection{Subdivisions of Graphs}\label{subdiv-graphs-3}

The following theorem gives a useful tool for constructing $3$-representable graphs, that is, graphs with representation number at most $3$.

\begin{theorem}[\cite{KP08}]\label{add-path-to-graph}
Let $G=(V,E)$ be a $3$-representable graph and $x,y\in V$. Denote by
$H$ the graph obtained from $G$ by adding to it a path of length
at least $3$ connecting $x$ and $y$. Then $H$ is also
$3$-representable.
\end{theorem}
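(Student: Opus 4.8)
The plan is to prove Theorem~\ref{add-path-to-graph} constructively, by exhibiting a $3$-uniform word for $H$ built from a given $3$-uniform word-representant for $G$. By Theorem~\ref{equiv-thm}, since $G$ is $3$-representable, fix a $3$-uniform word $w$ representing $G$. Write the new internal path vertices as $z_1,z_2,\ldots,z_m$, so that $H$ adds the path $x,z_1,z_2,\ldots,z_m,y$ with $m\geq 2$ (the hypothesis that the path has length at least $3$ means at least two internal vertices). The goal is to insert the letters $z_1,\ldots,z_m$ (three copies of each) into $w$, together possibly with a few extra copies of $x$ and $y$ arranged so that adjacencies are controlled. The key design principle is \emph{locality}: each $z_i$ should be made to alternate only with its two path-neighbours and with no vertex of $G$, while the endpoints $x$ and $y$ retain exactly their old neighbourhoods in $G$ plus the single new edges $xz_1$ and $z_my$.

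The main steps I would carry out are as follows. First, I would exploit Proposition~\ref{cyclic-shift}: since $w$ is $k$-uniform, any cyclic shift represents the same graph, so I may assume $w$ begins and ends with convenient letters (for instance, arrange an occurrence of $x$ near one end). Second, I would insert the path letters into a localized block of the word so that, upon deleting all other letters, the consecutive path vertices $z_i,z_{i+1}$ produce an alternating pattern while non-consecutive ones produce a non-alternating pattern such as $z_iz_iz_{i+2}z_{i+2}$. The standard trick, analogous to the tree construction earlier in the paper where inserting $w_1yw_2xyxw_3$ adds a pendant edge, is to place consecutive path letters in an interleaved ``staircase'' $x z_1 x z_1 z_2 z_1 z_2 z_3 z_2 \cdots$ so that each $z_i$ alternates with $z_{i-1}$ and $z_{i+1}$ but with nothing else. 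Third, I would verify that this insertion does not disturb any existing alternation relation among the letters of $G$: because the inserted block contributes at most a controlled number of copies of $x$ and $y$ and nothing of the other $G$-vertices, one checks that for $u,v\in V(G)$ the alternation status in the new word equals that in $w$. Fourth, I would confirm that $x$ does not accidentally alternate with $z_2,\ldots,z_m$ and that $y$ does not alternate with $z_1,\ldots,z_{m-1}$, and that no $z_i$ alternates with any $G$-vertex. Finally, if the resulting word is not $3$-uniform, I would re-uniformize it via the initial-permutation prepending procedure of Theorem~\ref{equiv-thm}, which preserves the represented graph; this is why the statement can claim $3$-representability rather than merely some higher uniformity.

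The hard part will be the bookkeeping that guarantees the endpoints $x$ and $y$ keep \emph{exactly} their old $G$-adjacencies while simultaneously gaining the new path edges and gaining nothing else. Adding extra copies of $x$ is dangerous: each new copy can destroy an alternation $xu$ for some $u\in V(G)$ that held in $w$, or create a spurious one. The reason the hypothesis ``length at least $3$'' is essential is precisely here---with at least two internal vertices one has enough room to buffer $x$ and $y$ away from each other and from the foreign $z_i$'s, whereas a single internal vertex (length $2$) would force $x$ and $y$ into an interaction that cannot always be realized with three copies. So the crux is to choose the insertion pattern and the placement of the auxiliary $x,y$ copies so that every old edge and non-edge of $G$ incident to $x$ or $y$ is provably preserved; once that delicate case-check is done, the alternation conditions for the purely-path letters are routine, and the uniformization step is automatic from the earlier theorems.
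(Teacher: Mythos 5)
The survey states this theorem without proof (it is cited from \cite{KP08}), so your attempt has to stand on its own, and it does not: the central design principle is unworkable. In a $3$-uniform word representing $H$, the edge $xz_1$ forces the three copies of $z_1$ to interleave with the three copies of $x$ as $xz_1xz_1xz_1$ or $z_1xz_1xz_1x$. The occurrences of $x$ sit at fixed positions of $w$, in general separated by many letters of $G$, so the copies of $z_1$ must be \emph{spread across the whole word}, one in each gap between consecutive occurrences of $x$; they cannot be confined to a ``localized block'' (if all three copies of $z_1$ were contiguous, the restriction to $\{x,z_1\}$ would contain the factor $z_1z_1$ and the edge $xz_1$ would be lost). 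The same applies to $z_m$ versus the fixed occurrences of $y$. Your staircase gadget evades this only by introducing \emph{new} copies of $x$ inside the block, and it is also internally wrong: restricting $xz_1xz_1z_2z_1z_2z_3z_2\cdots$ to $\{x,z_1\}$ gives $xz_1xz_1z_1$, which does not alternate, so the gadget fails to create the one edge it was designed for. Moreover, inserting extra copies of $x$ next to each other destroys the alternation of $x$ with its old neighbours in $G$, precisely the danger you flag but never resolve.

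The fallback step is invalid for a structural reason: the initial-permutation prepending of Theorem~\ref{equiv-thm} only \emph{adds} copies of letters occurring fewer than the maximum number of times; it can never remove copies. So once $x$ or $y$ occurs four or more times, no re-uniformization can yield a $3$-uniform word -- you would prove $k$-representability for some $k\geq 4$, i.e.\ mere word-representability, not $3$-representability. Your parenthetical claim that this step ``is why the statement can claim $3$-representability'' is exactly backwards. A correct argument keeps every letter at exactly three copies and distributes the new ones: for instance, place the three copies of $z_1$ one per $x$-gap, with two of them sandwiching a single occurrence of $x$ (a factor $z_1xz_1$); this yields $x\sim z_1$ while simultaneously killing any alternation of $z_1$ with every $u\in V(G)\setminus\{x\}$, since those two copies of $z_1$ become adjacent after deleting all letters other than $u$ and $z_1$. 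Do the symmetric construction for $z_m$ around an occurrence of $y$ (using Proposition~\ref{cyclic-shift} to normalize where $w$ starts), and chain $z_2,\ldots,z_{m-1}$ between the two gadgets, with a case analysis on how the occurrences of $x$ and $y$ interleave in $w$; the hypothesis of at least two internal vertices is exactly what lets the $x$-gadget and the $y$-gadget use \emph{distinct} letters. None of this bookkeeping -- which is the entire content of the theorem -- is present in your plan.
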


\begin{definition}\label{def-subdivision} A {\it subdivision} of a graph $G$ is a graph obtained from $G$ by replacing each edge $xy$ in $G$ by a {\em simple} path (that is, a path without self-intersection) from $x$ to $y$. A subdivision is called a {\em $k$-subdivision} if each of these paths is of length at least $k$.\end{definition}

\begin{definition}\label{edge-contraction-undirected} An {\em edge contraction} is an operation which removes an edge from a graph while gluing the two vertices it used to connect. An unoriented graph $G$ is a {\em minor} of another unoriented graph $H$ if a graph isomorphic to $G$ can be obtained from $H$ by contracting some edges, deleting some edges, and deleting some isolated vertices. \end{definition}

\begin{theorem}[\cite{KP08}]\label{minor-3-word-repr} For every graph $G$ there are infinitely many $3$-representable graphs $H$
that contain $G$ as a minor. Such a graph $H$ can be obtained from $G$ by subdividing {\em each} edge into {\em any} number of, but {\em at least} three edges. 
 \end{theorem}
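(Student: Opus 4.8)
The plan is to build $H$ as an appropriate subdivision of $G$ and to certify its $3$-representability by repeatedly invoking Theorem~\ref{add-path-to-graph}, starting from a trivially $3$-representable base graph. Let $V(G)=\{v_1,\dots,v_n\}$ and $E(G)=\{e_1,\dots,e_m\}$. First I would take $H_0$ to be the empty graph $E_n$ on the vertex set $V(G)$; as noted in Section~\ref{empty-sec}, $E_n$ is $2$-representable, hence $3$-representable by Theorem~\ref{k-implies-k-plus-1}. This graph already contains all vertices of $G$ but none of its edges, which is exactly the setting in which Theorem~\ref{add-path-to-graph} can be applied.

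Next I would process the edges of $G$ one at a time. Having constructed a $3$-representable graph $H_{i-1}$, and writing $e_i=xy$, I obtain $H_i$ from $H_{i-1}$ by adding a fresh simple path of some chosen length $\ell_i\ge 3$ joining $x$ and $y$, where the internal vertices of this path are new and shared with no other part of the graph. By Theorem~\ref{add-path-to-graph}, $H_i$ is again $3$-representable. After $m$ steps the graph $H:=H_m$ is precisely the subdivision of $G$ in which each edge $e_i$ has been replaced by a simple path of length $\ell_i\ge 3$, and the induction shows that $H$ is $3$-representable. This is the construction promised in the statement.

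It remains to check the minor relationship and the count. For the minor, I would observe that contracting all but one edge of each subdividing path collapses that path back to a single edge between its original endpoints; performing this for every path recovers exactly $G$, so $G$ is a minor of $H$ in the sense of Definition~\ref{edge-contraction-undirected}. For the ``infinitely many'' clause, note that whenever $G$ has at least one edge, the lengths $\ell_i$ may be chosen independently from $\{3,4,5,\dots\}$, and increasing any single $\ell_i$ strictly increases the number of vertices of $H$; this yields infinitely many pairwise non-isomorphic $3$-representable graphs $H$, each containing $G$ as a minor.

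The main obstacle is essentially already absorbed into Theorem~\ref{add-path-to-graph}, so what remains is mostly bookkeeping: I would need to ensure that at each step the two endpoints $x,y$ genuinely lie in the current graph (guaranteed, since all original vertices are present from $H_0$ onward) and that the newly introduced path interiors create no unintended adjacencies (guaranteed, since their internal vertices are fresh). The only genuinely delicate point is the ``infinitely many'' assertion in the degenerate case where $G$ is edgeless: there the subdivision construction returns $G$ itself and produces only one graph, so that clause should be read as applying to graphs possessing at least one edge, for which the freedom in choosing the $\ell_i$ immediately gives the infinite family.
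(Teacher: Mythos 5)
Your proof is correct and takes essentially the same route the paper attributes to \cite{KP08}: the discussion immediately following Theorem~\ref{minor-3-word-repr} confirms that the intended argument is exactly an iterated application of Theorem~\ref{add-path-to-graph}, starting from an edgeless (hence $3$-representable) graph and inserting fresh simple paths of length at least three for each edge of $G$. Your observations about recovering $G$ by contraction, varying the path lengths to get infinitely many non-isomorphic graphs $H$, and the degenerate edgeless case are sound bookkeeping on top of that same core argument.
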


Note that $H$ in Theorem~\ref{minor-3-word-repr} does not have to be a $k$-subdivision for some $k$, that it, edges of $G$ can be subdivided into different number (at least 3) of edges. In either case, the 3-subdivision of any graph $G$ is always 3-representable. Also, it follows from Theorem~\ref{add-path-to-graph} and the proof of Theorem~\ref{minor-3-word-repr} in \cite{KP08} that a graph obtained from an edgeless graph by inserting simple paths of length at least $3$ between (some) pairs of vertices of the graph is $3$-representable.

Finally, note that subdividing {\em each} edge in any graph into {\em exactly} two edges gives a bipartite graph, which is word-representable by Theorem~\ref{Kit-Sei-thm}  (see the discussion in Section~\ref{crown-graph-sec} on why a bipartite graph is word-representable).

\subsection{Graphs with High Representation Number}

In Theorem~\ref{main-charact} below we will see that the upper bound on a shortest word-representant for a graph $G$ on $n$ vertices is essentially $2n^2$, that is, one needs at most $2n$ copies of each letter to represent $G$. Next, we consider two classes of graphs that require essentially $n/2$ copies of each letter to be represented, and these are the longest known shortest word-representants.

\subsubsection{Crown Graphs}\label{crown-graph-sec}
\begin{definition}  A {\em crown graph} (also known as a {\em cocktail party graph}) $H_{n,n}$ is obtained from the complete bipartite graph $K_{n,n}$ by removing a {\em perfect matching}. That is, $H_{n,n}$ is obtained from $K_{n,n}$ by removing  $n$ edges such that {\em each} vertex was incident to {\em exactly one} removed edge. \end{definition}

\begin{figure}
\begin{center}

\begin{tabular}{ccccc}

\begin{tikzpicture}[node distance=1cm,auto,main node/.style={circle,draw,inner sep=1pt,minimum size=5pt}]

\node[main node] (1) {1};
\node[main node] (2) [below of=1] {$1'$};
\node (a) [below of=2,yshift=2mm] {$H_{1,1}$};


\end{tikzpicture}

&

\ \ \ 

&

\begin{tikzpicture}[node distance=1cm,auto,main node/.style={circle,draw,inner sep=1pt,minimum size=5pt}]

\node[main node] (1) {1};
\node[main node] (2) [below of=1] {$1'$};
\node[main node] (3) [right of=1] {2};
\node[main node] (4) [below of=3] {$2'$};
\node (a) [below of=2,yshift=2mm,xshift=5mm] {$H_{2,2}$};

\path
(1) edge node  {} (4);

\path
(3) edge node  {} (2);

\end{tikzpicture}

&

\ \ \ 

&

\begin{tikzpicture}[node distance=1cm,auto,main node/.style={circle,draw,inner sep=1pt,minimum size=5pt}]

\node[main node] (1) {1};
\node[main node] (2) [below of=1] {$1'$};
\node[main node] (3) [right of=1] {2};
\node[main node] (4) [below of=3] {$2'$};
\node[main node] (5) [right of=3] {3};
\node[main node] (6) [below of=5] {$3'$};
\node (a) [below of=4,yshift=2mm] {$H_{3,3}$};

\path
(1) edge node  {} (4)
     edge node  {} (6);

\path
(3) edge node  {} (2)
     edge node  {} (6);

\path
(5) edge node  {} (2)
     edge node  {} (4);

\end{tikzpicture}

\vspace{-4mm}

\ 

\end{tabular}

\end{center}
\vspace{-2mm}
\caption{Crown graphs}\label{crown-graphs-fig}
\end{figure}
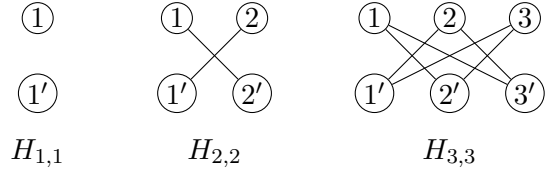

See Figure~\ref{crown-graphs-fig} for examples of crown graphs. 

By Theorem~\ref{Kit-Sei-thm} below, $H_{n,n}$ can be represented by a concatenation of permutations, because $H_{n,n}$ is a comparability graph defined in Section~\ref{perm-rep-graphs-sec} (to see this, just orient all edges from one part to the other). In fact, $H_{n,n}$ is known to require $n$ permutations to be represented. However, can we provide a shorter representation for $H_{n,n}$? It turns out that we can, to be discussed next, but such representations are still long (linear in $n$). 

Note that $H_{1,1}\in\mathcal{R}_2$ by Section~\ref{empty-sec}. Further, $H_{2,2}\neq K_4$, the complete graph on $4$ vertices, and thus $H_{2,2}\in\mathcal{R}_2$ because it can be 2-represented by $121'2'212'1'$. Also, $H_{3,3}=C_6\in\mathcal{R}_2$ by Section~\ref{cycle-sec}. Finally, $H_{4,4}=\mbox{Pr}_4\in\mathcal{R}_3$ by Section~\ref{prisms-sec}. The following theorem gives the representation number $\mathcal{R}(H_{n,n})$ in the remaining cases.

\begin{theorem}[\cite{G16}]\label{crown-thm-main} If $n\geq 5$ then the representation number of $H_{n,n}$ is $\lceil n/2 \rceil$ (that is, one needs $\lceil n/2 \rceil$ copies of each letter to represent $H_{n,n}$, but {\em not fewer}).\end{theorem}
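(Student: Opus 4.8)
\emph{Plan.} The plan is to prove two matching inequalities. Write the vertex classes as $U=\{1,\dots,n\}$ and $U'=\{1',\dots,n'\}$, so that in $H_{n,n}$ the edges are exactly the \emph{cross} pairs $\{i,j'\}$ with $i\neq j$, while the \emph{matched} pairs $\{i,i'\}$ and all within-class pairs are non-edges. Since the statement is an equality, I would split it into an upper bound $\mathcal{R}(H_{n,n})\le\lceil n/2\rceil$ (exhibit a suitable word) and a matching lower bound $\mathcal{R}(H_{n,n})\ge\lceil n/2\rceil$ (show no shorter uniform word works). By Theorem~\ref{equiv-thm} it suffices to argue about $k$-uniform words throughout, and by Proposition~\ref{cyclic-shift} I may cyclically shift any candidate representation at will.

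\emph{Upper bound.} First I would exhibit an explicit $\lceil n/2\rceil$-uniform word $w_n$ representing $H_{n,n}$. The design principle is to separate the two kinds of non-edges. All within-class non-adjacencies are cheap: a palindromic skeleton (as in the representation $12\cdots n\,n\cdots21$ of the empty graph) already forces every pair inside $U$, and every pair inside $U'$, to fail to alternate, while leaving the cross pairs free. The expensive constraints are the $n$ matched non-edges $\{i,i'\}$, which must be broken \emph{without} disturbing any cross alternation $\{i,j'\}$, $i\neq j$. I would pair up the indices and let each of the $\lceil n/2\rceil$ passes be responsible for breaking the alternation of (up to) two matched pairs via a local transposition or reversal, so that two matched pairs are ``resolved'' per copy; this is exactly what makes $\lceil n/2\rceil$ copies suffice. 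Once the word is written down, the verification is a finite, if tedious, case check over the three pair-types (cross, within-class, matched), confirming alternation precisely on the cross pairs with $i\neq j$.

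\emph{Lower bound.} Suppose $w$ is $k$-uniform and represents $H_{n,n}$; I must force $2k\ge n$. The first step is a rigid interleaving lemma extracted from the edges: since each $i\in U$ alternates with every $j'$ ($j\neq i$), the $k$ occurrences of $i$ cut $w$ into $k-1$ internal gaps, each of which contains \emph{exactly one} copy of every $j'$ with $j\neq i$, while one additional copy of each such $j'$ lies in one of the two outer gaps; the symmetric statement holds with the roles of $U$ and $U'$ exchanged. The second step uses the matched non-edges: because $i$ and $i'$ do \emph{not} alternate, their $2k$ occurrences interleave with at least one ``collision'' ($ii$ or $i'i'$ consecutive in the induced subword), and this collision must be compatible with the rigid interleaving forced above. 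The heart of the argument is then to show that these $n$ collisions cannot be packed into a $k$-uniform word unless $k\ge\lceil n/2\rceil$.

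The cleanest way to see why the extremal value is $\lceil n/2\rceil$ is to note that the transitive orientation of $H_{n,n}$ (orient every edge from $U$ to $U'$) is precisely the \emph{standard example} poset $S_n$, whose order dimension is well known to equal $n$: in any linear extension at most one matched pair can be inverted, so $n$ extensions are required. A word is strictly more expressive than a family of linear extensions because it may \emph{cluster} repeated letters, and clustering is exactly what lets a single ``occurrence layer'' invert two matched pairs rather than one. The main obstacle, and the technical core of the whole theorem, is to make this ``at most two matched inversions per layer'' bound precise in the word model: one must control how the single extra copy of each letter in the outer gaps, together with the non-block nature of the interleaving, limits the number of matched pairs that can be broken simultaneously, and then sum this over the $k$ layers to obtain $2k\ge n$, that is $k\ge\lceil n/2\rceil$. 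I expect this packing/counting step to be where essentially all the difficulty lies; the interleaving lemma and the upper-bound construction are comparatively routine.
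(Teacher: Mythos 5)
The paper itself offers no proof of this theorem (it is quoted from \cite{G16}), so your proposal must stand on its own, and as it stands it is an outline rather than a proof: both halves stop exactly where the real work begins. For the upper bound you describe a ``design principle'' (palindromic skeleton to kill within-class non-edges, two matched pairs resolved per pass) but never write down the word $w_n$, and the verification you dismiss as ``a finite, if tedious, case check'' is precisely the missing content: it is not automatic that breaking the alternations of $\{i,i'\}$ and $\{j,j'\}$ inside one pass leaves every cross pair involving $i,i',j,j'$ alternating across \emph{all} $\lceil n/2\rceil$ passes, and this interaction between passes is where such constructions typically fail. Note also that the small cases in the paper ($H_{2,2}, H_{3,3}\in\mathcal{R}_2$, $H_{4,4}\in\mathcal{R}_3$) show the formula $\lceil n/2\rceil$ is false for $n\le 4$, so any correct construction and any correct lower-bound argument must genuinely use $n\ge 5$ somewhere; your sketch never does, which is a warning sign.

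For the lower bound, your two preliminary observations are correct: for an edge $\{i,j'\}$ in a $k$-uniform word each of the $k-1$ internal gaps of $i$ contains exactly one $j'$ with the remaining copy in an outer gap, and for the non-edge $\{i,i'\}$ the induced subword must contain $ii$ or $i'i'$ as a factor. But the step you yourself label ``the technical core'' --- that at most two matched pairs can be inverted ``per layer,'' summed over layers to give $2k\ge n$ --- is never proved, and in fact is not even well-posed: a general $k$-uniform word does not decompose into $k$ permutation layers. That decomposition exists precisely for concatenations of permutations, which is the setting in which the standard-example/dimension argument you invoke (each linear extension inverts at most one matched pair, hence $n$ permutations are needed) is valid; the whole point of the theorem is that word-representants are \emph{not} of this form, so the ``layer'' heuristic is the thing that must be replaced by an actual counting mechanism tied to your gap structure and collision positions. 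Without a precise definition of a layer and a proof of the two-inversions-per-layer bound, the inequality $k\ge\lceil n/2\rceil$ is asserted, not derived. In short, you have correctly isolated where the difficulty lies, but neither the construction nor the counting argument is supplied, so the proposal does not constitute a proof.
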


\subsubsection{Crown Graphs with an Apex}

The graph $G_n$ is obtained from a crown graph $H_{n,n}$ by adding an apex (all-adjacent vertex). See Figure~\ref{G3-pic} for the graph $G_3$. 

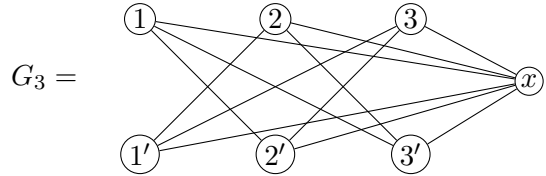
\begin{figure}
\begin{center}
\begin{tikzpicture}[node distance=1.8cm,auto,main node/.style={circle,draw,inner sep=1pt,minimum size=5pt}]

\node[main node] (1) {1};
\node[main node] (2) [below of=1] {$1'$};
\node[main node] (3) [right of=1] {2};
\node[main node] (4) [below of=3] {$2'$};
\node[main node] (5) [right of=3] {3};
\node[main node] (6) [below of=5] {$3'$};
\node[main node] (7) [above right of=6,yshift=-3mm,xshift=3mm] {$x$};
\node (a) [below left of=1,yshift=5mm] {$G_{3}=$};

\path
(1) edge node  {} (4)
     edge node  {} (6);

\path
(3) edge node  {} (2)
     edge node  {} (6);

\path
(5) edge node  {} (2)
     edge node  {} (4);

\path
(7) edge node  {} (1)
     edge node  {} (2)
     edge node  {} (3)
     edge node  {} (4)
     edge node  {} (5)
     edge node  {} (6);

\end{tikzpicture}
\end{center}
\vspace{-5mm}
\caption{Graph $G_3$}\label{G3-pic}
\end{figure}

It turns out that $G_n$ is the {\em worst} known word-representable graph in the sense that it requires the maximum number of copies of each letter to be represented, as recorded in the following theorem.  

\begin{theorem}[\cite{KP08}] The representation number of $G_n$ is $\lfloor n/2 \rfloor$.\end{theorem}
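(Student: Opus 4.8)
The plan is to exploit the apex to convert the problem into one about concatenations of permutations, and then into a computation of order dimension. Let $x$ denote the apex and let $w$ be any $k$-uniform word representing $G_n$. By Proposition~\ref{cyclic-shift} we may cyclically shift $w$ so that it begins with $x$, writing $w = x\,B_1\,x\,B_2\cdots x\,B_k$, where $B_1,\dots,B_k$ are the factors between consecutive copies of $x$ (with $B_k$ the suffix after the last $x$). Because $x$ is adjacent to all $2n$ vertices of the underlying crown graph, $x$ must alternate with each of them; since every letter occurs exactly $k$ times and $w$ starts with $x$, this forces each block $B_i$ to contain \emph{every} non-apex letter exactly once, so each $B_i$ is a permutation of $V(H_{n,n})$. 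Deleting all copies of $x$ leaves the alternation of any two non-apex letters unchanged, so $w$ represents $G_n$ if and only if the concatenation $B_1B_2\cdots B_k$ represents $H_{n,n}$. Hence $\mathcal{R}(G_n)$ equals the least number of permutations of $V(H_{n,n})$ whose concatenation represents $H_{n,n}$.

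First I would record when a concatenation $P_1P_2\cdots P_k$ of permutations represents a graph: two letters $y,z$ alternate in it precisely when they appear in the same relative order in every $P_i$, since a single order-reversal at a junction between consecutive permutations creates two adjacent equal letters and destroys alternation. Thus $P_1\cdots P_k$ represents exactly the comparability graph of the intersection order $\bigcap_i P_i$, and the $P_i$ form a realizer of that order. Orienting each edge of $H_{n,n}$ from the unprimed to the primed part (the orientation used before Theorem~\ref{Kit-Sei-thm}) produces the transitive orientation $i < j' \iff i\neq j$, the classical \emph{standard example} poset $S_n$; and since order dimension is a comparability invariant, the minimum number of permutations needed is exactly $\dim(S_n)$.

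Finally I would evaluate $\dim(S_n)$ by a Dushnik--Miller argument. For each index $i$ the incomparable pair $\{i,i'\}$ must satisfy $i' < i$ in at least one permutation of the realizer; assign to $i$ one such permutation $L$. If two distinct indices $i\neq j$ were assigned the \emph{same} $L$, then combining $i' < i$ and $j' < j$ with the poset relations $i < j'$ and $j < i'$ would give $i' < i < j' < j < i'$ in $L$, a contradiction; hence the assignment is injective and $\dim(S_n)\geq n$, while an explicit realizer gives $\dim(S_n)\leq n$. This yields $\mathcal{R}(G_n)=\dim(S_n)=n$, so that attaching an apex roughly doubles the representation number of the crown graph (from $\approx n/2$ to $n$), consistent both with the hereditary bound $\mathcal{R}(G_n)\geq\mathcal{R}(H_{n,n})$ and with the description of $G_n$ as the worst known word-representable example. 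I expect the injectivity/cycle step in the lower bound --- showing that no single permutation can reverse more than one critical pair $\{i,i'\}$ --- to be the main obstacle, whereas the apex reduction and the upper-bound construction are comparatively routine.
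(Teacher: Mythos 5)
Your proof is correct, and it is essentially the argument behind the cited result (the survey states this theorem without proof, and the proof in \cite{KP08} and in the book follows exactly your route): the apex forces every uniform word-representant, after a cyclic shift via Proposition~\ref{cyclic-shift}, to factor into permutations of $V(H_{n,n})$, so $\mathcal{R}(G_n)$ equals the permutational representation number of the crown graph, which is the order dimension of the standard example $S_n$, namely $n$ by the Dushnik--Miller critical-pair argument. One clarification about the final value: you obtain $\mathcal{R}(G_n)=n$ whereas the statement reads $\lfloor n/2\rfloor$; these agree because in the theorem's formula $n$ denotes the number of vertices of $G_n$, which is $2n+1$, as the sentence immediately following the theorem (about graphs on $n$ vertices with representation number between $\lfloor n/2\rfloor$ and essentially $2n$) makes clear. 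Indeed, under the literal reading, with $n$ the crown index, the statement would be false, since it would contradict the hereditary bound $\mathcal{R}(G_n)\geq \mathcal{R}(H_{n,n})=\lceil n/2\rceil$ of Theorem~\ref{crown-thm-main} for odd $n$; so the value your argument produces is the correct one. Two minor points: your appeal to the (true but nontrivial) fact that order dimension is a comparability invariant can be avoided by observing that $H_{n,n}$ is a prime graph for $n\geq 3$, hence its only transitive orientations are $S_n$ and its dual, both of dimension $n$; and, as your Dushnik--Miller step implicitly requires, the result holds for $n\geq 2$ (e.g.\ $G_1=P_3$ has representation number $2$).
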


It is unknown whether there exist graphs on $n$ vertices with representation number between $\lfloor n/2 \rfloor$ and essentially $2n$ (given by Theorem~\ref{main-length}).

\section{Permutationally Representable Graphs and their Significance}\label{perm-rep-graphs-sec}

An orientation of a graph is {\em transitive} if presence of edges $u\rightarrow v$ and $v\rightarrow z$ implies presence of the edge $u\rightarrow z$.  An unoriented graph is a {\em comparability graph} if it admits a transitive orientation. It is well known \cite[Section 3.5.1]{KL15}, and is not difficult to show that the smallest non-comparability graph is the cycle graph $C_5$.

\begin{definition} A graph $G=(V,E)$ is {\em permutationally representable} if it can be represented by a word of the form $p_1\cdots p_k$ where $p_i$ is a permutation. We say that $G$ is {\em permutationally {\em k}-representable}.
\end{definition}

For example, the graph in Figure~\ref{wrg-ex} is permutationally representable, which is justified by the concatenation of two permutations 21342341.

The following theorem is an easy corollary of the fact that any partially ordered set can be represented as intersection of linear orders.

\begin{theorem}[\cite{KS08}]\label{Kit-Sei-thm} A graph is permutationally representable if and only if it is a comparability graph.\end{theorem}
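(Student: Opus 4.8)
The plan is to prove both implications through a single combinatorial observation about how a pair of letters behaves in a concatenation of permutations, and then to invoke the order-theoretic fact quoted just before the statement.

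First I would record the key lemma on alternation. If $w=p_1\cdots p_k$ with each $p_i$ a permutation of $V$, then each $p_i$ contributes exactly one copy of $x$ and one copy of $y$, so the restriction of $w$ to $\{x,y\}$ is a concatenation of $k$ blocks, each equal to either $xy$ or $yx$. Such a sequence is of the form $xyxy\cdots$ or $yxyx\cdots$ if and only if every block is identical, that is, if and only if $x$ precedes $y$ in \emph{all} of the $p_i$, or $y$ precedes $x$ in \emph{all} of the $p_i$. Reading each $p_i$ as a linear order $<_i$ on $V$ (where $a<_i b$ means $a$ appears before $b$ in $p_i$), this says precisely: $x$ and $y$ alternate in $w$ if and only if they are ordered the same way by every $<_i$.

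For the forward direction, suppose $G$ is represented by $w=p_1\cdots p_k$ and let $P$ be the partial order $\bigcap_i <_i$ on $V$. By the lemma, $xy\in E$ holds exactly when $x$ and $y$ are comparable in $P$, so $G$ is the comparability graph of $P$; orienting each edge in the direction dictated by $P$ yields a transitive orientation, whence $G$ is a comparability graph. For the converse, I would start from a transitive orientation of $G$, which is exactly a strict partial order $P$ on $V$ whose comparability graph is $G$. Here the quoted fact enters: every finite partial order is the intersection of its linear extensions $<_1,\dots,<_k$ (Szpilrajn's extension theorem supplies enough of them—for each incomparable pair, one extension placing $x$ below $y$ and another placing $y$ below $x$, so that the intersection introduces no spurious comparabilities). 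Letting $p_i$ realize $<_i$ and setting $w=p_1\cdots p_k$, the lemma shows that $x$ and $y$ alternate in $w$ if and only if they are comparable in $P=\bigcap_i <_i$, i.e.\ if and only if $xy\in E$; thus $w$ permutationally represents $G$.

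The only nontrivial ingredient is the realizer fact—that a poset is the intersection of its linear extensions—and this is exactly the statement the paper flags as the source of the corollary; everything else reduces to the elementary block-counting argument above. The points requiring care are checking that a transitive orientation of a simple graph really is an irreflexive, antisymmetric, transitive relation (so that it is a genuine partial order), and verifying that the intersection of the linear orders recovers precisely the comparability relation of $G$ with no extra edges—both of which follow directly once the alternation lemma is in place.
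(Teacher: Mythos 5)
Your proof is correct and follows exactly the route the paper indicates: the paper presents this theorem as an easy corollary of the fact that any partially ordered set is the intersection of linear orders, and your argument is precisely that reduction, with the block-alternation lemma supplying the (routine but necessary) bridge between alternation in a concatenation of permutations and agreement of the corresponding linear orders. No gaps; you have simply written out the details the paper leaves implicit.
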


Next, consider a schematic representation of the graph $G$ in Figure~\ref{G-H} obtained from a graph $H$ by adding an all-adjacent vertex (apex). The following theorem holds.

\begin{theorem}[\cite{KP08}]\label{perm-versus-wr} The graph $G$ is word-representable if and only if the graph $H$ is permutationally representable.\end{theorem}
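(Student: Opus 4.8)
The plan is to exploit the single structural feature that distinguishes $G$ from $H$: the apex $v$ is adjacent to every vertex of $H$. Writing $H=(V,E)$, the graph $G$ has vertex set $V\cup\{v\}$ with $v$ joined to all of $V$. The bridge between the two notions is an elementary observation: since $v\notin V$, deleting all copies of $v$ from a word over $V\cup\{v\}$ never changes the subword induced by any pair $x,y\in V$, and hence preserves alternation, and therefore adjacency, among the vertices of $H$.

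For the direction ``$H$ permutationally representable $\Rightarrow$ $G$ word-representable'', I would start from a representation $w=p_1p_2\cdots p_k$ of $H$ in which each $p_i$ is a permutation of $V$, and prepend a fresh copy of the apex letter to each block, forming $w'=vp_1vp_2\cdots vp_k$. For any two letters $x,y\in V$ the inserted copies of $v$ are irrelevant by the observation above, so $x$ and $y$ alternate in $w'$ exactly when $xy\in E$. For the apex itself and any $x\in V$, the induced subword is $vxvx\cdots vx=(vx)^k$, which is alternating; thus $v$ is adjacent to every vertex of $H$, as required. Hence $w'$ represents $G$.

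For the converse I would normalise a word-representant of $G$ so as to force a block structure. By Theorem~\ref{equiv-thm} there is a $k$-uniform word $w$ representing $G$, and by Proposition~\ref{cyclic-shift} I may cyclically shift $w$ so that it begins with $v$, writing $w=vw_1vw_2\cdots vw_k$ where each $w_i$ avoids $v$. The key step is to show each $w_i$ is a permutation of $V$. Fix $x\in V$; because $v$ and $x$ are adjacent and $w$ begins with $v$, their induced subword must be the alternating word $(vx)^k$, which places exactly one copy of $x$ strictly between consecutive occurrences of $v$ and one copy after the last $v$; that is, exactly one copy of $x$ in each of the blocks $w_1,\dots,w_k$. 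As this holds for every $x\in V$, each block $w_i$ contains each letter of $V$ exactly once, i.e.\ is a permutation of $V$. Finally, deleting all copies of $v$ from $w$ yields the concatenation $w_1w_2\cdots w_k$ of permutations, and by the bridging observation this word induces the same $x$--$y$ subwords as $w$ for all $x,y\in V$; therefore it represents $H$, showing $H$ is permutationally representable.

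The main obstacle, and the heart of the argument, is the structural claim in the converse that each block $w_i$ is a permutation. Everything hinges on combining uniformity (so that $v$ and each $x$ occur the same number $k$ of times) with the cyclic-shift normalisation (so that the word begins with $v$) to pin the induced $v$--$x$ subword down to be precisely $(vx)^k$; once this interleaving is forced, the permutation structure and the representation of $H$ follow mechanically. One could alternatively invoke Theorem~\ref{Kit-Sei-thm} to restate the converse as ``$H$ is a comparability graph'', but the direct block argument seems cleaner and self-contained.
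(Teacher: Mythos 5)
Your proof is correct, and it is essentially the argument of the original source: the paper itself states this theorem without proof (deferring to \cite{KP08}), and both of your directions---prepending a copy of the apex letter to each permutation block to get $vp_1vp_2\cdots vp_k$, and conversely combining Theorem~\ref{equiv-thm} with Proposition~\ref{cyclic-shift} to normalise a $k$-uniform word-representant into the form $vw_1vw_2\cdots vw_k$ and then showing each block $w_i$ is a permutation---are exactly the ideas used there. The key step, that $k$-uniformity plus alternation with $v$ forces the induced $v$--$x$ subword to be $(vx)^k$ and hence one copy of each $x$ per block, is argued correctly.
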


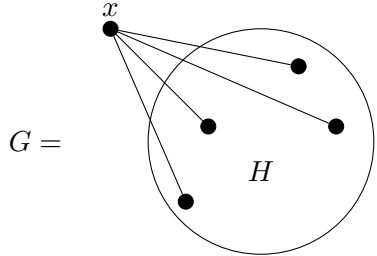
\begin{figure}
\begin{center}

\begin{tikzpicture}[node distance=1cm,auto,main node/.style={fill,circle,draw,inner sep=3pt,minimum size=5pt}]

\draw (2,2) circle (1.5cm);
    
    \node[draw,fill,circle,inner sep=2pt,minimum size=2pt] at (1, 1.2)   (1) { };
    \node[draw,fill,circle,inner sep=2pt,minimum size=2pt] at (1.3, 2.2)   (2) { };
\node[draw,fill,circle,inner sep=2pt,minimum size=2pt] at (2.5, 3)   (3) {};
    \node[draw,fill,circle,inner sep=2pt,minimum size=2pt] at (3, 2.2)   (4) {};
 \node[draw,fill,circle,inner sep=2pt,minimum size=2pt] at (0, 3.5)   (5) {};
 
  \node at (0, 3.75)   (6) {$x$}; 
  \node at (2, 1.6)   (7) {$H$};
 \node at (-1, 2)   (7) {$G=$};
   
    \draw[] (5) -- (1);
    \draw[] (5) -- (2);
   \draw[] (5) -- (3);
   \draw[] (5) -- (4);

\end{tikzpicture}
\end{center}
\vspace{-5mm}
\caption{$G$ is obtained from $H$ by adding an apex}\label{G-H}
\end{figure}

A {\em wheel graph} $W_n$ is the graph obtained from a cycle graph $C_n$ by adding an apex. It is easy to see that none of cycle graphs $C_{2n+1}$, for $n\geq 2$, is a comparability graph, and thus none of wheel graphs $W_{2n+1}$, for $n\geq 2$ is word-representable. In fact, $W_5$ is the smallest example of a non-word-representable graph (the only one on 6 vertices). Section~\ref{non-w-r-sec}  discusses other examples of non-word-representable graphs. 

As a direct corollary to Theorem~\ref{perm-versus-wr}, we have the following important result revealing the structure of neighbourhoods of vertices in a word-representable graph.

\begin{theorem}[\cite{KP08}]\label{neighbourhood} If a graph $G$ is word-representable then the neighbourhood of each vertex in $G$ is permutationally representable (is a comparability graph by Theorem~\ref{Kit-Sei-thm}). \end{theorem}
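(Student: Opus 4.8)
The plan is to derive this directly from Theorem~\ref{perm-versus-wr} by exhibiting, around each vertex, the precise ``apex plus $H$'' configuration to which that theorem applies. The only extra ingredient needed is the hereditary nature of word-representability recorded in Remark~\ref{hereditary}.

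First I would fix an arbitrary vertex $v$ of the word-representable graph $G$ and write $N(v)$ for its set of neighbours. By the neighbourhood of $v$ I mean the induced subgraph $G[N(v)]$, and the goal is to show that $G[N(v)]$ is permutationally representable. The crucial observation is to pass to the induced subgraph $G[\{v\}\cup N(v)]$ on $v$ together with all of its neighbours. Within this subgraph $v$ is adjacent to every other vertex, so $v$ is an apex, and the subgraph is exactly of the shape ``$H$ with an added apex'' where $H=G[N(v)]$.

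Next I would invoke heredity: since $G$ is word-representable and the class is closed under vertex deletion by Remark~\ref{hereditary}, the induced subgraph $G[\{v\}\cup N(v)]$ is again word-representable. Now Theorem~\ref{perm-versus-wr} applies directly to this subgraph (playing the role of $G$ in that theorem, with $H=G[N(v)]$): word-representability of $G[\{v\}\cup N(v)]$ is equivalent to permutational representability of $H=G[N(v)]$. Hence $G[N(v)]$ is permutationally representable, and by Theorem~\ref{Kit-Sei-thm} it is a comparability graph. Since $v$ was arbitrary, this holds for the neighbourhood of every vertex.

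The argument is short once the reduction is set up, so there is no genuinely hard computational step; the only point demanding care is the bookkeeping. I would want to make sure that the apex structure is read off from the correct induced subgraph --- namely $\{v\}\cup N(v)$ rather than all of $G$ --- and that the instance of Theorem~\ref{perm-versus-wr} is applied with $H$ identified as the induced subgraph on the neighbours, so that ``neighbourhood'' is consistently interpreted as $G[N(v)]$. The main (modest) obstacle is thus conceptual rather than technical: recognising that local adjacency to $v$ makes $v$ an apex of $G[\{v\}\cup N(v)]$, which is precisely the hypothesis that Theorem~\ref{perm-versus-wr} converts into permutational representability of the neighbourhood.
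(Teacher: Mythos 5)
Your proof is correct and is essentially the paper's own argument: the paper presents this theorem as a direct corollary of Theorem~\ref{perm-versus-wr}, and the intended derivation is precisely what you spell out --- restrict to the closed neighbourhood $G[\{v\}\cup N(v)]$, use heredity (Remark~\ref{hereditary}) to keep word-representability, and apply the apex theorem with $H=G[N(v)]$. Nothing is missing; you have simply made explicit the bookkeeping the paper leaves to the reader.
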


The converse to Theorem~\ref{neighbourhood} is {\em not} true as demonstrated by the counterexamples in Figure~\ref{2-counterex} taken from \cite{HKP10} and \cite{CKL17}, respectively.

\begin{figure}
\begin{center}
\begin{tabular}{ccc}

\begin{tikzpicture}[node distance=1cm,auto,main node/.style={fill,circle,draw,inner sep=0pt,minimum size=5pt}]

\node[main node] (1) {};
\node[main node] (2) [above left of=1,yshift=-0.3cm,xshift=0.3cm] {};
\node[main node] (3) [above right of=1,yshift=-0.3cm,xshift=-0.3cm] {};
\node[main node] (4) [below of=1,yshift=0.5cm] {};
\node[main node] (5) [above right of=2,xshift=-0.3cm,yshift=0.5cm] {};
\node[main node] (6) [below right of=4,xshift=1cm,yshift=0.3cm] {};
\node[main node] (7) [below left of=4,xshift=-1cm,yshift=0.3cm] {};
\node (8) [left of=5,xshift=-0.5cm] {co-($T_2$)};

\path
(5) edge (7) 
(5) edge (2) 
(5) edge (3) 
(5) edge (6) 
(2) edge (3) 
(2) edge (1) 
(2) edge (4) 
(2) edge (7) 
(1) edge (3) 
(1) edge (4) 
(6) edge (3) 
(6) edge (4) 
(6) edge (7)
(4) edge (7)
(4) edge (3);

\end{tikzpicture}

&
\ \ 
&

\begin{tikzpicture}[node distance=1cm,auto,main node/.style={fill,circle,draw,inner sep=0pt,minimum size=5pt}]
\node[main node] (1) {};
\node[main node] (2) [below left of=1] {};
\node[main node] (3) [below right of=1] {};
\node[main node] (4) [left of=2] {};
\node[main node] (5) [right of=3] {};
\node[main node] (6) [below right of=2] {};
\node[main node] (7) [below of=6] {};

\path
(4) edge (5) 
(1) edge (5) 
(1) edge (4) 
(1) edge (2) 
(1) edge (3) 
(6) edge (2) 
(6) edge (3) 
(6) edge (7) 
(7) edge (4) 
(7) edge (5);

\end{tikzpicture}

\end{tabular}
\end{center}
\vspace{-5mm}
\caption{Non-word-representable graphs in which each neighbourhood is permutationally representable}\label{2-counterex}
\end{figure}
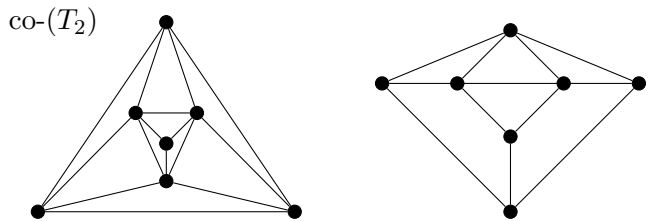

A {\em clique} in an unoriented graph is a subset of pairwise adjacent vertices. A {\em maximum clique} is a clique of the maximum size. Given a graph $G$, the {\em Maximum Clique problem} is to find a maximum clique in $G$. It is well known that the Maximum Clique problem is NP-complete. However, this problem is polynomially solvable for word-representable graphs, which is a corollary of Theorem~\ref{neighbourhood} and is discussed next.

\begin{theorem}[\cite{HKP11,HKP16}]\label{Max-Click-thm} The Maximum Clique problem is polynomially solvable on word-representable graphs.\end{theorem}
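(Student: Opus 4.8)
The plan is to reduce the Maximum Clique problem on a word-representable graph $G$ to a collection of Maximum Clique problems on comparability graphs, exploiting the neighbourhood structure guaranteed by Theorem~\ref{neighbourhood}. The key observation is that every maximum clique $C$ in $G$ contains at least one vertex $v$, and since all other vertices of $C$ are adjacent to $v$, the clique $C\setminus\{v\}$ lives entirely inside the neighbourhood $N(v)$. Thus a maximum clique containing $v$ corresponds to a maximum clique in the induced subgraph $G[N(v)]$, augmented by $v$ itself.

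First I would, for each vertex $v\in V$, form the induced subgraph $H_v=G[N(v)]$ on the neighbours of $v$. By Theorem~\ref{neighbourhood}, each $H_v$ is permutationally representable, hence a comparability graph by Theorem~\ref{Kit-Sei-thm}. Next I would invoke the classical fact that the Maximum Clique problem is solvable in polynomial time on comparability graphs: a maximum clique in a comparability graph is precisely a maximum chain in any transitive orientation, and a maximum chain can be found in polynomial time (e.g.\ via a longest-path computation in the acyclic digraph given by the transitive orientation, or equivalently by the duality with antichain covers). Computing a transitive orientation of a comparability graph is itself a well-known polynomial-time task. Let $\omega_v$ denote the size of a maximum clique in $H_v$; then the largest clique of $G$ that contains $v$ has size $\omega_v+1$.

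Finally I would output $\max_{v\in V}(\omega_v+1)$ as the clique number of $G$, recovering the clique itself by adding $v$ to a maximum clique found in the optimal $H_v$. Correctness follows because every clique of size at least one is counted: a clique $C$ with $v\in C$ is detected when processing $v$, since $C\setminus\{v\}\subseteq N(v)$ is a clique of $H_v$. Running over all $n$ vertices and solving a polynomial-time problem on each $H_v$ keeps the total running time polynomial.

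The \emph{main obstacle} is not algorithmic but structural: the entire reduction hinges on Theorem~\ref{neighbourhood}, which guarantees that each neighbourhood is a comparability graph. One must also be careful that a transitive orientation of $H_v$ can be produced efficiently, not merely asserted to exist; this is precisely the content of the classical comparability-graph recognition and orientation algorithms, which I would cite rather than reprove. Given those ingredients, the argument is a clean local-to-global reduction, and the only genuinely nontrivial input is the permutational representability of neighbourhoods already established earlier in the excerpt.
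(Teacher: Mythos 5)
Your proof is correct and takes essentially the same route as the paper: both reduce Maximum Clique on $G$ to Maximum Clique on the neighbourhood subgraphs, which are comparability graphs by Theorem~\ref{neighbourhood}, and then invoke the classical polynomial-time algorithm for comparability graphs, using the observation that any clique containing $v$ lies in $N(v)\cup\{v\}$. Your write-up merely makes explicit the algorithmic details (transitive orientation, longest chain) that the paper leaves as a citation.
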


\begin{proof} Each neighbourhood of a word-representable graph $G$ is a comparability graph by Theorem~\ref{neighbourhood}. It is known that the Maximum Clique problem is solvable on comparability graphs in polynomial time. Thus the problem is solvable on $G$ in polynomial time, since any maximum clique belongs to the neighbourhood of a vertex including the vertex itself.\end{proof}

\section{Graphs Representable by Pattern Avoiding Words}\label{sec3}

It is a very popular area of research to study patterns in words and permutations\footnote{The patterns considered in this section are ordered, and their study comes from Algebraic Combinatorics. There are a few results on word-representable graphs and (unordered) patterns studied in Combinatorics on Words, namely on squares and cubes in words, that are not presented in this paper, but can be found in~\cite[Section 7.1.3]{KL15}. One of the results says that for any word-representable graph, there exists a cube-free word representing it.}. The book~\cite{K11} provides a comprehensive introduction to the field. A {\em pattern} is a word containing each letter in $\{1,2,\ldots,k\}$  at least once for some $k$. A pattern $\tau=\tau_1\tau_2\cdots \tau_{m}$ {\em occurs} in a word $w=w_1w_2\cdots w_n$ if there exist $1\leq i_1< i_2< \cdots< i_m\leq n$ such that $\tau_1\tau_2\cdots \tau_{m}$ is {\em order-isomorphic} to $w_{i_1}w_{i_2}\cdots w_{i_m}$. We say that $w$ {\em avoids} $\tau$ if $w$ contains no occurrences of $\tau$. For example, the word 42316 contains several occurrences of the pattern 213 (all ending with 6), e.g. the subsequences 426, 416 and 316.  

As a particular case of a more general program of research suggested by the author during his plenary talk at the international Permutation Patterns Conference at the East Tennessee State University, Johnson City in 2014, one can consider the following direction (see \cite[Section 7.8]{KL15}). Given a set of words avoiding a pattern, or a set of patterns, which class of graphs do these words represent?

As a trivial example, consider the class of graphs defined by words avoiding the pattern 21. Clearly, any 21-avoiding word is of the form $$w=11\cdots 122\cdots 2 \cdots nn\cdots n.$$ If a letter $x$ occurs {\em at least twice} in $w$ then the respective vertex is isolated. The letters occurring {\em exactly once} form a {\em clique} (are connected to each other). Thus, 21-avoiding words describe graphs formed by a clique and an independent set.

Two papers, \cite{GKZ16} and \cite{M16}, are dedicated to this research direction and will be summarised in this section. So far, apart from Theorem~\ref{thm-avoid-gen} and Corollary~\ref{cor-avoid-gen} below, only 132-avoiding and 123-avoiding words were studied from word-representability point of view. The results of these studies are summarized in Figure~\ref{132-123-sum}, which is taken from \cite{M16}. In that figure, and more generally in this section, we slightly abuse the notation and call graphs representable by $\tau$-avoiding words {\em $\tau$-representable}.  

\begin{figure}
\begin{center}\includegraphics[height=4cm]{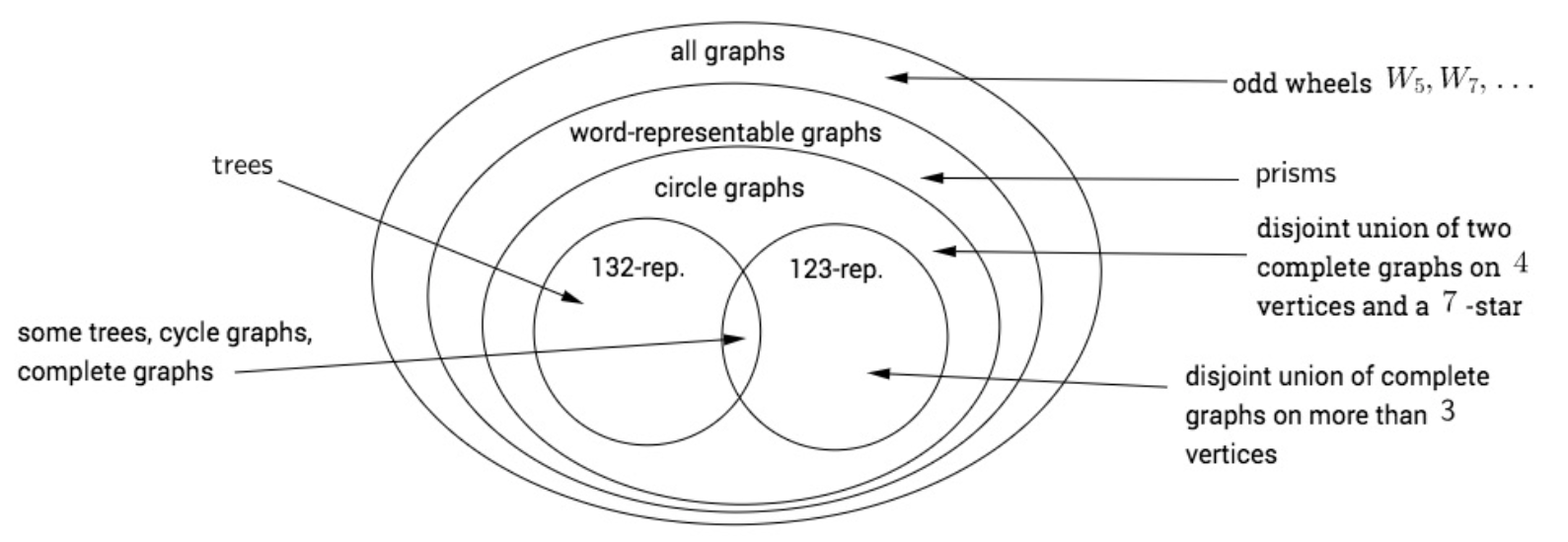} \end{center}
\vspace{-5mm}
\caption{Relations between graph classes taken from \cite{M16}}\label{132-123-sum}
\end{figure}

We note that unlike the case of word-representability without extra restrictions, labeling of graphs {\em does matter} in the case of pattern avoiding representations. For example, the 132-avoiding word 4321234 represents the graph to the left in  Figure~\ref{labeling-matters}, while {\em no} 132-avoiding word represents the other graph in that figure. Indeed, {\em no two} letters out of 1, 2 and 3 can occur {\em once} in a word-representant or else the respective vertices would {\em not} form an {\em independent set}. Say, w.l.o.g. that 1 and 2 occur at {\em least twice}. But then we can find 1 and 2 on {\em both sides} of an occurrence of the letter 4, and the patten 132 is {\em inevitable}.

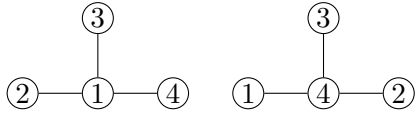
\begin{figure}
\begin{center}
\begin{tikzpicture}[node distance=1cm,auto,main node/.style={circle,draw,inner sep=1pt,minimum size=5pt}]

\node[main node] (1) {2};
\node[main node] (2) [right of=1] {1};
\node[main node] (3) [above of=2] {3};
\node[main node] (4) [right of=2] {4};
\node[main node] (5) [right of=4] {1};
\node[main node] (6) [right of=5] {4};
\node[main node] (7) [above of=6] {3};
\node[main node] (8) [right of=6] {2};

\path
(2) edge (1)
     edge (3)
     edge (4);

\path
(6) edge (5)
     edge (7)
     edge (8);

\end{tikzpicture}
\end{center}
\vspace{-5mm}
\caption{132-representable (left) and non-132-representable (right) labelings of the same graph}\label{labeling-matters}
\end{figure}

The following theorem has a great potential to be applicable to the study of $\tau$-representable graphs for $\tau$ of length 4 or more.

\begin{theorem}[\cite{M16}]\label{thm-avoid-gen}  Let $G$ be a word-representable graph, which can be represented by a word avoiding a pattern $\tau$ of length $k + 1$. Let $x$ be a vertex in $G$ such that its degree $d(x) \geq k$. Then, {\em any} word $w$ representing $G$ that avoids $\tau$ must contain no more than $k$ copies of $x$. \end{theorem}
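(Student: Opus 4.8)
The plan is to argue by contraposition. I fix an arbitrary $\tau$-avoiding word $w$ representing $G$, assume toward a contradiction that $x$ occurs at least $k+1$ times in $w$, and then exhibit an occurrence of $\tau$ inside $w$. To set up the local structure, let $x_1,\ldots,x_{k+1}$ be the first $k+1$ occurrences of $x$ and let $G_1,\ldots,G_k$ be the $k$ gaps, where $G_i$ is the factor of $w$ strictly between $x_i$ and $x_{i+1}$. Since these are consecutive occurrences of $x$, no further copy of $x$ lies in any $G_i$; hence for every neighbour $y$ of $x$ (which, by definition of word-representability, alternates with $x$) there is \emph{exactly one} copy of $y$ in each $G_i$. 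So every neighbour of $x$ is available, once, in each of the $k$ gaps.

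Next I would handle the value bookkeeping. Because $d(x)\geq k$, the set $\{x\}\cup N(x)$ supplies at least $k+1$ distinct letters; writing $v_z$ for the label of a vertex $z$, these are the values $v_x$ together with $v_y$ for $y\in N(x)$. As $\tau$ has length $k+1$, it uses at most $k+1$ distinct symbols, so I fix an order-preserving injection $\phi$ from the alphabet of $\tau$ into $\{v_x\}\cup\{v_y:y\in N(x)\}$, chosen so that $v_x$ \emph{itself} lies in the image of $\phi$. (This choice is possible precisely because $\tau$ is no longer than the number of available values, and it is the crucial use of the freedom we have.) Applying $\phi$ to $\tau$ yields a target value-sequence $t_1\cdots t_{k+1}$, and I classify each entry as type $A$ if $t_j=v_x$ and type $G$ otherwise. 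Since $v_x$ is in the image of $\phi$ and every symbol of $\tau$ occurs, at least one entry has type $A$; writing $p$ and $q$ for the number of type-$A$ and type-$G$ entries, we have $p+q=k+1$ and $p\geq 1$, hence $q\leq k$.

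Finally I realise the target as a subsequence of $w$. The relevant positions form the alternating scheme $x_1,G_1,x_2,G_2,\ldots,x_{k+1}$, consisting of $k+1$ single-letter anchors of value $v_x$ and $k$ gaps, each gap able to supply any neighbour value. I place the type-$A$ entries on $x$-anchors and the type-$G$ entries one per gap, reading left to right. Because there are $k$ gaps and only $q\leq k$ type-$G$ entries, a short (and tight, via $p+q=k+1$) counting argument shows that such a parity-respecting placement always succeeds; the extremal instance is the block shape $A\cdots A\,G\cdots G$, where the $p$ anchors and the last $q$ gaps are used exactly. The selected letters appear in increasing position, their values match $t_1\cdots t_{k+1}$, and since $\phi$ is order-preserving this subsequence is order-isomorphic to $\tau$, contradicting that $w$ avoids $\tau$.

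The step I expect to be the main obstacle is the realisation, and inside it the insistence that $\phi$ hit $v_x$. There are only $k$ gaps but $k+1$ letters to place, so at least one letter must be carried by a copy of $x$; this forces one value of $\tau$ to be routed through $v_x$, and making that compatible with the \emph{order} demanded by $\tau$ (not merely with its multiset of values) is the delicate point. It is exactly here that the two hypotheses are used in a matched, tight way: the hardest case is $\tau$ a permutation of length $k+1$, where $\phi$ is forced to be a bijection and all $k$ of the neighbours guaranteed by $d(x)\geq k$ are genuinely needed.
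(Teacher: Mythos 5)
Your proof is correct and takes essentially the same approach as the paper: extract the factor $xw_1x\cdots w_kx$ whose $k$ gaps each contain every neighbour of $x$ exactly once, then embed $\tau$ using the $x$-anchors and one letter per gap. The paper compresses your second and third paragraphs into the single assertion that such a factor contains \emph{all} patterns of length $k+1$; your order-preserving injection hitting $v_x$ and the parity-respecting placement (both of which do go through, as you claim) are precisely the details behind that assertion.
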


\begin{proof} If there are {\em at least} $k+1$ occurrences of $x$ in $w$, we obtain a factor (i.e.\ consecutive subword) $xw_1x\cdots w_kx$, where $k$ neighbours of $x$ in $G$ occur in each $w_i$. But then $w$ contains {\em all} patterns of length $k+1$, in particular, $\tau$. Contradiction.  \end{proof}

\begin{corollary}[\cite{M16}]\label{cor-avoid-gen}  Let $w$ be a word-representant for a graph which avoids a pattern of length $k+1$. If some vertex $y$ adjacent to $x$ has degree {\em at least} $k$, then $x$ occurs {\em at most} $k+1$ times in $w$. \end{corollary}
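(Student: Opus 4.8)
The plan is to derive the corollary from Theorem~\ref{thm-avoid-gen} by applying that theorem to the neighbour $y$ rather than to $x$ directly, and then analyse how the few copies of $y$ constrain the number of copies of $x$. The hypothesis gives us a vertex $y$ adjacent to $x$ with $d(y) \geq k$, so Theorem~\ref{thm-avoid-gen} (with the roles of the distinguished vertex played by $y$) tells us that any $\tau$-avoiding word-representant $w$ contains at most $k$ copies of $y$. The goal is to turn this bound on the occurrences of $y$ into the bound ``$x$ occurs at most $k+1$ times'' via the alternation condition, since $xy$ is an edge.

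First I would record the structural consequence of $xy \in E$: because $x$ and $y$ are adjacent in $G$, they must \emph{alternate} in $w$. This means that if we delete from $w$ all letters except the copies of $x$ and $y$, we obtain a strictly alternating word, either of the form $xyxy\cdots$ or $yxyx\cdots$. Second, I would count: in any alternating word over the two symbols $x$ and $y$, the number of occurrences of the two letters differ by at most one. Concretely, if $y$ occurs $m$ times, then $x$ can occur at most $m+1$ times (the extra copy arises only when the alternating pattern both begins and ends with $x$, i.e.\ it has the shape $x y x y \cdots x$).

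Third, I would combine the two facts. Since $w$ avoids a pattern of length $k+1$ and $d(y)\geq k$, Theorem~\ref{thm-avoid-gen} applied to $y$ gives $m \leq k$ copies of $y$ in $w$. Feeding this into the alternation count yields that $x$ occurs at most $m+1 \leq k+1$ times, which is exactly the claimed bound.

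The only subtle point — the step I would watch most carefully — is the justification that the alternating condition forces the ``$\pm 1$'' counting bound; this is elementary but must be stated cleanly, because it is precisely what converts a bound on $y$'s multiplicity into a bound on $x$'s multiplicity. In particular, it is worth emphasising that the argument goes through $y$, not $x$: we cannot apply Theorem~\ref{thm-avoid-gen} to $x$ itself since we are given no lower bound on $d(x)$, only on the degree of its neighbour $y$. This asymmetry is the conceptual crux, and once it is in place the corollary follows with no further computation.
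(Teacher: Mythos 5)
Your proof is correct and is exactly the intended derivation: the paper states this result as an immediate corollary of Theorem~\ref{thm-avoid-gen} without supplying a proof, and your route---apply the theorem to the high-degree neighbour $y$ to bound its occurrences by $k$, then use the fact that the adjacent pair $x,y$ must alternate in $w$ so their multiplicities differ by at most one---is precisely the argument that positioning presupposes. The observation that the theorem must be applied to $y$ rather than to $x$ (since no degree bound on $x$ is given) correctly identifies the one point of substance.
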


\subsection{132-Representable Graphs}\label{132-rep-subsec} 

It was shown in \cite{GKZ16} that the minimum (with respect to the number of vertices) non-word-representable graph, the wheel graph $W_5$, is actually a minimum non-132-representable graph (we do not know if there exit other non-132-representable graphs on 6 vertices).

\begin{theorem}[\cite{GKZ16}]\label{thm3-2}
If a graph $G$ is $132$-representable, then there exists a $132$-avoiding word $w$ representing $G$ such that any letter in $w$ occurs at most twice.
\end{theorem}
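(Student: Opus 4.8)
My plan is to induct on the number of vertices, since the high-multiplicity letters can only be low-degree vertices, which I can peel off one at a time. \emph{Reduction to low degree.} By Theorem~\ref{thm-avoid-gen} applied with the pattern $\tau=132$ (so $k=2$), every vertex $x$ with $d(x)\ge 2$ occurs at most twice in \emph{any} $132$-avoiding word representing $G$. Hence, starting from a $132$-avoiding word witnessing that $G$ is $132$-representable, the only letters that can occur three or more times are those of degree $0$ or $1$, and these are the only ones I need to tame.

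\emph{Induction and reinsertion.} I would induct on $|V|$. Given a $132$-avoiding representation of $G$, pick a vertex $x$ of degree at most $1$; deleting all copies of $x$ yields a $132$-avoiding word (any subword of a $132$-avoiding word avoids $132$) representing $G-x$, so by the induction hypothesis $G-x$ has a $132$-avoiding representation $w'$ in which every letter occurs at most twice. It remains to reinsert $x$ using at most two copies. The key bookkeeping fact is that inserting a \emph{new} letter into a word never changes whether two \emph{other} letters alternate, so reinsertion can only affect the adjacencies of $x$ itself; I only have to set up $x$'s adjacencies and preserve $132$-avoidance. If $x$ is isolated, I append $xx$ at the very end and give $x$ the largest label: the block $xx$ cannot alternate with any letter (two equal letters become adjacent in the restricted subword), and a maximal letter sitting at the very end cannot participate in any occurrence of $132$ (it is too large to be the ``$1$'' or the ``$2$'', and too late to be the middle ``$3$''), so the result still avoids $132$. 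If $x$ is a pendant with neighbour $y$, I insert one copy of $x$ immediately before and one immediately after a single chosen occurrence of $y$, producing a local factor $xyx$. Then $x$ and $y$ alternate (the restricted subword is $xyx$ or $xyxy$), while for every other letter $z$ the two copies of $x$ are separated only by a $y$, hence are consecutive in the $\{x,z\}$-subword and cannot alternate; thus $x$ is adjacent to exactly $y$, as required.

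\emph{The crux.} The remaining point, and the main obstacle, is to choose \emph{which} occurrence of $y$ to bracket and \emph{what label} to give $x$ so that inserting $xyx$ creates no occurrence of $132$. Since the two inserted copies of $x$ carry equal values, any new occurrence of $132$ must use exactly one of them, and I would enumerate the three roles a copy can play (the small first entry, the large middle entry, or the medium last entry) for each of the two copies. The difficulty is a genuine tension: if I make $x$ smaller than everything, an inserted copy can become the ``$1$'' of a $132$ completed by any inversion lying to its right, whereas if I make $x$ larger than everything, a copy can become the middle ``$3$'' of a $132$ spanning an ascent across the insertion point. I expect to resolve this by exploiting the recursive structure of $132$-avoiding words --- decomposing $w'$ around the first occurrence of its maximum, so that an occurrence of $y$ can be selected past which the relevant part of the word is monotone (a ``clean cut'') --- and then choosing the label of $x$ as a threshold that separates the ``high'' and ``low'' letters appropriately, killing all the cases simultaneously. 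Verifying that such a choice of occurrence and threshold always exists is the technical heart of the argument; everything else is routine.
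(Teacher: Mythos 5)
Your opening move is fine, and so is the easy half of your plan: Theorem~\ref{thm-avoid-gen} with $k=2$ does confine multiplicity $\geq 3$ to vertices of degree at most $1$, and your treatment of isolated vertices is sound (one can even avoid relabelling there by collapsing all copies of $x$ into a doubled copy $xx$ placed at the position of the first one). Note that the survey states Theorem~\ref{thm3-2} without proof, quoting \cite{GKZ16}, so your argument has to stand on its own --- and it does not: the pendant-reinsertion step, which you yourself defer as ``the technical heart,'' is not merely an unchecked computation. It is false as you need it, and the induction collapses with it.

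Here is a concrete counterexample. Let $G$ be the paw: a triangle on $\{1,2,3\}$ plus a pendant $x$ attached to $2$. Then $G$ is $132$-representable (e.g.\ the word $1\,0\,2\,0\,3$, reading $x$ as the letter $0$, avoids $132$ and represents $G$), so it satisfies the hypothesis of the theorem. Your induction deletes $x$ and invokes the inductive hypothesis on $K_3$; a perfectly legitimate witness is $w'=1231$, which avoids $132$, represents $K_3$, and uses each letter at most twice. Now $2$ occurs only once in $w'$, and forcing $x$ to be adjacent to $2$ but to neither $1$ nor $3$ pins the insertion down completely: a single copy of $x$ is impossible (two letters occurring once each always alternate, so $x$ would become adjacent to $3$), and two copies must straddle the $2$, lie on one side of the $3$, and lie between the two $1$s, leaving only $1\,x\,2\,x\,3\,1$. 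This word contains a $132$ for \emph{every} possible value $t$ of $x$: if $t<1$ take the subsequence $x\,2\,1$; if $1<t<2$ take $1\,2\,x$; if $t>2$ take $1\,x\,2$. So no ``choice of occurrence and threshold'' exists, even with your extra freedom to relabel $x$. The defect is structural rather than technical: your induction hypothesis asserts only that \emph{some} suitable word for $G-x$ exists, and some such words cannot be extended at all, so ``it remains to reinsert $x$'' is not a valid reduction. A correct proof must either establish a much stronger inductive statement (producing words of a special form that remain insertable), or forgo the peel-and-reinsert scheme altogether and instead work with a word representing $G$ itself, reducing the multiplicities of the over-represented (degree $\le 1$) letters in place using the structure of $132$-avoiding words. (Separately, the relabelling of $x$ you allow yourself deviates from the statement as it is naturally read in this pattern-avoiding setting, where the survey stresses that labelings matter; but as the example shows, the approach fails even granting that freedom.)
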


Theorems~\ref{thm3-2} and~\ref{circle-gr-thm} give the following result.

\begin{theorem}[\cite{GKZ16}]\label{132-are-circle}
Every $132$-representable graph is a circle graph.
\end{theorem}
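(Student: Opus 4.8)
The plan is to combine the two cited results directly. Theorem~\ref{thm3-2} tells us that any $132$-representable graph $G$ admits a $132$-avoiding word-representant $w$ in which every letter appears at most twice. Theorem~\ref{circle-gr-thm} characterizes the graphs of representation number $2$ as exactly the circle graphs (other than complete graphs). The strategy is therefore to upgrade the ``at most twice'' word $w$ into a genuinely $2$-uniform word representing the same graph, so that Theorem~\ref{circle-gr-thm} applies and yields that $G$ is a circle graph.

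First I would invoke Theorem~\ref{thm3-2} to fix a $132$-avoiding word $w$ representing $G$ in which each letter occurs at most twice. The letters split into two types: those occurring exactly twice, and those occurring exactly once. The vertices corresponding to once-occurring letters are precisely the ones that alternate with every other letter (a single occurrence alternates trivially with anything), so I would first verify what edges such letters force and confirm this is consistent with $G$. The main technical step is then to make $w$ uniform: for each letter $x$ that occurs only once, I want to insert a second copy of $x$ without changing which pairs alternate. The natural move, following the spirit of Theorem~\ref{equiv-thm} and Proposition~\ref{cyclic-shift}, is to append copies in a controlled way (for instance, placing the extra occurrences so that each doubled letter still alternates with exactly its neighbours), producing a $2$-uniform word $w'$ representing the same graph $G$.

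Once a $2$-uniform word-representant $w'$ is in hand, the conclusion is immediate: $G$ is $2$-representable, hence either a complete graph (in $\mathcal{R}_1$) or, by Theorem~\ref{circle-gr-thm}, a circle graph in $\mathcal{R}_2$. Since complete graphs are trivially circle graphs (every chord can be drawn to cross every other), $G$ is a circle graph in all cases. Thus the whole argument reduces to the passage from ``at most two occurrences'' to ``exactly two occurrences'' while preserving the alternation relation.

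The step I expect to be the main obstacle is exactly that uniformization: inserting second copies of the singleton letters is delicate because a carelessly placed second copy of $x$ can destroy alternation between two other letters $y$ and $z$, or create a spurious alternation between $x$ and some non-neighbour. The cleanest route is probably to observe that a letter occurring once alternates with \emph{everything}, so in $G$ such a vertex is adjacent to all others; one then argues that inserting the duplicate immediately adjacent to the original copy (forming a factor $xx$) leaves all other pairs' alternation untouched, while $x$ now alternates only with letters having a copy outside the block, which must be reconciled with $x$ being all-adjacent. Handling this reconciliation carefully — possibly by noting such universal vertices make $G$ close to complete, or by appealing more directly to the circle-chord picture rather than forcing $2$-uniformity — is where the real care is needed, and it is likely why the original proof routes through the structural Theorems~\ref{thm3-2} and~\ref{circle-gr-thm} rather than an ad hoc insertion.
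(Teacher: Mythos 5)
Your overall skeleton --- Theorem~\ref{thm3-2} plus Theorem~\ref{circle-gr-thm}, with the complete-graph case handled separately --- is exactly the paper's intended route (the paper states the theorem as an immediate consequence of those two results). But your execution of the uniformization step rests on a false claim: a letter $x$ occurring \emph{exactly once} in a word $w$ does \emph{not} alternate with every other letter. If $y$ occurs twice, then $x$ alternates with $y$ if and only if $x$ lies between the two copies of $y$: the restriction $yxy$ is alternating, while $xyy$ and $yyx$ are not. A concrete counterexample is the ($132$-avoiding) word $122$, which represents the edgeless graph on $\{1,2\}$ even though $1$ occurs once; singleton letters are not universal vertices. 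This error then propagates: your proposed fix of inserting the duplicate immediately next to the original occurrence (creating a factor $xx$) destroys precisely the edges $xy$ for which $x$ sat between the two copies of $y$, since $yxy$ becomes $yxxy$, which no longer alternates. So both the premise and the repair fail.

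The correct repair is the tool you only gesture at: the initial-permutation extension from the proof of Theorem~\ref{equiv-thm} (due to \cite{KP08}). Given the word $w$ from Theorem~\ref{thm3-2}, let $p(w)$ be the permutation formed by the letters occurring exactly once, listed in the order of their occurrence in $w$, and pass to $p(w)w$. By the cited result this word represents the same graph $G$, and it is $2$-uniform. One can also check this directly: for two singletons $x$ and $z$ (with $x$ first), the restriction becomes $xzxz$; for a singleton $x$ and a doubled $y$, the restrictions $yxy$, $xyy$, $yyx$ become $xyxy$, $xxyy$, $xyyx$ respectively, so alternation is preserved in every case --- note that prepending in the order of occurrence is essential, as the reverse order would turn $xz$ into the non-alternating $zxxz$. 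The word $p(w)w$ need not avoid $132$, but that is irrelevant at this stage: all you need is $2$-uniformity, which gives $\mathcal{R}(G)\leq 2$, and then Theorem~\ref{circle-gr-thm}, together with the fact that complete graphs are circle graphs, yields that $G$ is a circle graph.
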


Thus, by Theorems~\ref{circle-gr-thm}, \ref{none-of-prisms} and~\ref{132-are-circle},  none of prisms $\mbox{Pr}_n$, $n\geq 3$, is 132-representable. A natural question is if there are circle graphs that are not 132-representable. 

\begin{theorem}[\cite{M16}]\label{two-K4} Not all circle graphs are $132$-representable. E.g. disjoint union of two complete graphs $K_4$ is a circle graph, but it is not $132$-representable. \end{theorem}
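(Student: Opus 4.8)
The statement has two halves, and I would treat them separately. The circle-graph half is routine: the disjoint union $2K_4$ is not a complete graph, and it is $2$-representable, e.g.\ by the $2$-uniform word $1\,2\,3\,4\,1\,2\,3\,4\,5\,6\,7\,8\,5\,6\,7\,8$ (within each half all pairs alternate, while any cross pair reads as $xxyy$ and hence does not alternate). By Theorem~\ref{circle-gr-thm} every $2$-representable non-complete graph is a circle graph, so $2K_4\in\mathcal{R}_2$ gives the claim directly. (Alternatively, circle graphs are closed under disjoint union and each $K_4$ is a circle graph.) All the real work is in the second half, and here the plan is to use Theorem~\ref{thm3-2}: if $2K_4$ were $132$-representable, then under some labeling of its $8$ vertices there is a $132$-avoiding word $w$ representing it in which each letter occurs \emph{at most twice}. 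I will derive a contradiction purely from the structure of $w$.

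Write the two cliques as $A$ and $B$, each a $K_4$, with all cross pairs non-adjacent. The first observation is a confinement lemma: \emph{every letter occurring exactly once lies in the same clique}. Indeed, if $x\in A$ and $y\in B$ each occurred once, the subword $\{x,y\}$ would be $xy$ or $yx$, which always alternates; but $x,y$ are non-adjacent, a contradiction. Consequently at least one of the two cliques, say $B$, has all four of its letters occurring \emph{exactly twice} (the once-letters, if any, are all inside the other clique). Since $B$ is a $K_4$, its four letters pairwise alternate, and four letters that pairwise alternate and each occur exactly twice must appear in $w$ as a doubled permutation $\pi\pi$: the order of first occurrences equals the order of second occurrences (if the first $x$ precedes the first $y$, alternation forces the pattern $xyxy$), so the $B$-subsequence of $w$ is $\pi_B\pi_B$ for some permutation $\pi_B$ of the four (distinct-valued) $B$-labels.

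The contradiction then comes from a purely pattern-theoretic lemma, which I regard as the heart of the argument: \emph{for four distinct values, the doubled permutation $\pi\pi$ always contains an occurrence of $132$}. Granting this, note that $\pi_B\pi_B$ is a subword of $w$, hence must avoid $132$ — impossible. To prove the lemma I would argue by necessary conditions on a $132$-avoiding doubled permutation $\pi\pi$ on distinct values $\beta_1\cdots\beta_n$. Let $m$ be the minimum letter. Using that the whole second block follows the first occurrence of $m$, any descent among the values larger than $m$ (found anywhere after that first $m$) yields a triple $m<x<y$ appearing in the order $m,y,x$, i.e.\ a $132$; this forces $\pi\setminus\{m\}$ to be increasing, and a short positional check (the transition from the end of block one, $\beta_{n-1}$, back to the start of block two, $\beta_1$, is a descent unless $m$ is last) forces $m$ to be the last letter of $\pi$. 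Together these pin down $\pi=2\,3\,\cdots\,n\,1$ uniquely. For $n=3$ this is $231$, and $231231$ does avoid $132$ — which is exactly why the clique size must be $4$; but for $n\ge 4$ the word $2\,3\,\cdots\,n\,1\,2\,3\,\cdots\,n\,1$ still contains $132$, witnessed by the opening $2$, the letter $n$, and a later copy of $3$ (since $2<3<n$). Hence no $\pi$ on four values gives a $132$-avoiding $\pi\pi$.

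The main obstacle is this final lemma: the confinement step and the doubled-permutation normal form are quick, but everything hinges on showing that a $4$-letter doubled permutation is unavoidably $132$-containing while the $3$-letter case $231$ escapes. I expect the delicate point to be making the ``$m$ last and the rest increasing'' reduction airtight (carefully handling which occurrences of $m$ and of the large letters are used, and verifying that no further case of $\pi$ survives), since this is precisely where the threshold between clique size $3$ and $4$ is located and thus explains why the theorem is stated for $K_4\sqcup K_4$ rather than a smaller union.
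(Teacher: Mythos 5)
Your proof is correct, and since the paper states this theorem without proof (deferring to \cite{M16}), your argument in fact supplies the missing details along exactly the intended lines: Theorem~\ref{thm3-2} gives a word with every letter occurring at most twice, once-occurring letters pairwise alternate and hence must lie in a single clique, so the other $K_4$ appears as a $2$-uniform subword which pairwise alternation forces to be a doubled permutation $\pi\pi$, and your key lemma that $\pi\pi$ on four distinct values always contains $132$ is precisely the fact quoted in the paper after Theorem~\ref{132-trees-rep}, namely that no $2$-uniform $132$-avoiding representation of $K_n$ exists for $n\geq 3$. The circle-graph half via the explicit $2$-uniform word and Theorem~\ref{circle-gr-thm} is also sound, and your closing observation that $231231$ avoids $132$ correctly identifies why the threshold sits at clique size $4$.
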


\begin{theorem}[\cite{GKZ16}]\label{132-trees-rep}
Any tree is $132$-representable.
\end{theorem}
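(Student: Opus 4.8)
The plan is to bypass the existing tree construction and give a direct recursive construction of a \emph{labeled} word-representant that is $132$-avoiding, arguing by induction on the number of vertices and working with a \emph{rooted} tree. Root $T$ at an arbitrary vertex $r$, let $c_1,\dots,c_d$ be the children of $r$ with corresponding child-subtrees $T_1,\dots,T_d$, and define recursively
\[
\omega(T)=\omega(T_d)\,\omega(T_{d-1})\cdots\omega(T_1)\;r\;c_1c_2\cdots c_d ,
\]
where each $\omega(T_j)$ is the recursively built word for $T_j$ and the trailing $c_1\cdots c_d$ are single \emph{extra} copies of the child-roots. The base case is a single vertex, represented by one copy of its letter. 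By construction $r$ occurs once, each child-root gains a second copy from the appended block, and every other letter occurs exactly twice, so $\omega(T)$ uses at most two copies of each letter (consistent with Theorem~\ref{thm3-2}).

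For the labeling I would assign the values $1,\dots,n$ by a pre-order traversal in which $r$ gets the smallest value and the subtrees $T_1,\dots,T_d$ receive \emph{contiguous} blocks of consecutive integers, ordered so that $T_1$ gets the smallest block and $T_d$ the largest. This yields three facts used throughout: the root of every subtree is the minimum value occurring in it; the value-intervals of distinct subtrees are disjoint; and in $\omega(T)$ the blocks of the left part $\omega(T_d)\cdots\omega(T_1)$ have strictly \emph{decreasing} value-ranges, while the right part $r\,c_1\cdots c_d$ is strictly \emph{increasing}.

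I would first check that $\omega(T)$ represents $T$. Each child-root $c_j$ has exactly one copy on each side of the unique $r$, giving a factor $c_j r c_j$, so $c_j$ alternates with $r$; every non-child descendant keeps both copies on the same side of $r$, so it does not alternate with $r$. The extra copy appended to a child-root does not disturb adjacencies inside its subtree (for a child $g$ of $c_j$ the relevant subword upgrades from $g\,c_j\,g$ to $g\,c_j\,g\,c_j$, still alternating, while deeper descendants keep a non-alternating pattern with $c_j$). Finally, disjointness of the value-blocks places the copies of any cross-subtree pair in separated blocks, making them non-alternating. This part is routine bookkeeping handled by the induction hypothesis on each $\omega(T_j)$.

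The heart of the argument, and the step I expect to be the main obstacle, is $132$-avoidance. I would classify a putative occurrence $w_i<w_k<w_j$ ($i<j<k$) by the position of the peak index $j$: it cannot sit in the increasing right part, and it cannot be the minimal-valued $r$, so $j$ lies in the left part. If $k$ is also in the left part, the pattern is confined to $\omega(T_d)\cdots\omega(T_1)$, where the strictly decreasing ranges force $i$ and $j$ into one block and induction applies (shifting values to a contiguous range is order-isomorphic). The delicate case is $i,j$ in the left part with $k$ in the right part, so $w_k=c_b$. Here the decreasing ranges force the ascent $w_i<w_j$ to occur \emph{within a single block} $\omega(T_a)$; then $c_b$ would lie strictly inside the interval of $T_a$, but the only child-root in that interval is $c_a=\min T_a$, which cannot satisfy $w_i<c_a$. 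This contradiction closes the last case and completes the induction.
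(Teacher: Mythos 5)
The survey states this theorem only with a citation to \cite{GKZ16} and gives no proof of its own, so your argument has to stand on its own --- and it does. Your recursive construction (decreasing subtree blocks, then $r$, then an increasing run of the children, combined with a pre-order labeling that makes each subtree a contiguous interval whose minimum is its root) is correct: it produces a word in which the root occurs once and every other letter exactly twice, consistent with Theorem~\ref{thm3-2}, and for stars it specializes exactly to the word $4321234$ that the paper exhibits in Figure~\ref{labeling-matters}. The alternation check and the peak-position case analysis for $132$-avoidance both go through. Two details should be made explicit when writing this up, though neither is a real gap. First, in the case where all of $i<j<k$ lie in the left part, forcing $i$ and $j$ into one block $\omega(T_a)$ is not yet enough to invoke the induction hypothesis: you must also place $k$ in that block, which follows by the same decreasing-range argument (if $k$ sat in a later block, then $w_k<\min T_a\le w_i$, contradicting $w_i<w_k$). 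Second, your justification of non-alternation for cross-subtree pairs (``copies in separated blocks'') is literally true only when neither vertex is a child-root; for a pair of child-roots $c_a,c_b$ with $a>b$, each letter has one copy in its own block and one in the final run, and non-alternation holds instead because the final run $c_1\cdots c_d$ reverses the order of the blocks, so the restriction reads $c_a c_b c_b c_a$. Both are one-line repairs, and with them your proof is a complete, self-contained alternative to the argument the survey outsources to \cite{GKZ16}.
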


Note that in the case of pattern avoiding representations of graphs, Theorem~\ref{equiv-thm} does not necessarily work, because extending a representation to a uniform representation may introduce an occurrences of the pattern(s) in question. For example, while any complete graph $K_n$ can be represented by the 132-avoiding word $n(n-1)\cdots 1$, it was shown in \cite{M16} that for $n\geq 3$ no 2-uniform 132-avoiding representation of $K_n$ exists. In either case, \cite{M16} shows that any tree can actually be represented by a 2-uniform word thus refining the statement of Theorem~\ref{132-trees-rep}. For another result on uniform 132-representation see Theorem~\ref{comp-123-132} below.

\begin{theorem}[\cite{GKZ16}]
Any cycle graph is $132$-representable.
\end{theorem}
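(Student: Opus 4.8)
The plan is to exhibit, for each $n\ge 3$, a single explicit word $w_n$ over $\{1,\dots,n\}$, then verify two things: that $w_n$ represents $C_n$, and that $w_n$ avoids $132$. Since the cyclic-shift-and-swap closure of a path representant (the technique of Section~\ref{cycle-sec}) generally fails to avoid $132$ — shifting a letter to the front and swapping typically manufactures a low--high--medium subsequence — I would not aim for a uniform word. Instead I propose
$$w_n=n\,(n{-}1)\,(n{-}2)\,n\,(n{-}3)\,(n{-}2)\,(n{-}4)\,(n{-}3)\cdots 2\,3\,1\,2,$$
that is, the prefix $n\,(n{-}1)\,(n{-}2)\,n$ followed by the blocks $m\,(m{+}1)$ for $m=n{-}3,n{-}4,\dots,1$. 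In $w_n$ the letters $n-1$ and $1$ occur once and every other letter occurs exactly twice, so by Theorem~\ref{thm3-2} this is already as economical as one can hope for. (For $n=3,4,5$ the word reads $3213$, $432412$, and $54352312$.)

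To see that $w_n$ represents $C_n$, I would read off directly from the alternation condition which pairs of letters alternate. The value $n$ sits at the first position and reappears just after $n-2$, so it alternates exactly with $n-1$ and $n-2$; each singly-occurring letter alternates only with its two designated neighbours; and within the ``staircase'' tail one checks that $i$ and $j$ alternate iff they are consecutive. The outcome is that the alternating pairs are exactly $\{i,i{+}1\}$ for $1\le i\le n-3$ together with $\{n{-}2,n\}$, $\{n,n{-}1\}$ and $\{n{-}1,1\}$. These edges form the single $n$-cycle $1\,\text{--}\,2\,\text{--}\cdots\text{--}\,(n{-}2)\,\text{--}\,n\,\text{--}\,(n{-}1)\,\text{--}\,1$, so $w_n$ represents $C_n$. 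All of this is a routine application of the definition of alternating letters.

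The substance of the proof is $132$-avoidance, and the decisive feature of the labeling is that the \emph{leftmost} occurrences of the letters appear in strictly decreasing order $n,n{-}1,\dots,1$. Suppose for contradiction that positions $i<j<k$ give $w_i<w_k<w_j$, and set $c:=w_k$. If $k$ is the first occurrence of $c$, then $w_i<c$ occurs at $i<k$, so the smaller value $w_i$ has its first occurrence strictly before that of $c$; since first occurrences decrease, this forces $w_i>c$, a contradiction. Otherwise $k$ is the second occurrence of $c$, whence $c\in\{n\}\cup\{2,\dots,n-2\}$; the case $c=n$ is impossible as there is no larger $w_j$, so $2\le c\le n-2$. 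By construction the second occurrence of $c$ is the right entry of the block $(c{-}1)\,c$, so $w_{k-1}=c-1$, and because first occurrences decrease, every value smaller than $c$ first appears at position $\ge k-1$. Hence the only position before $k$ carrying a value $<c$ is $k-1$ itself. A ``low'' letter $w_i<c$ must therefore have $i=k-1$, leaving no room for a ``high'' letter $w_j$ with $k-1<j<k$ — a contradiction. Thus $w_n$ avoids $132$, completing the proof.

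The main obstacle is exactly this balancing act: the representation requirement pushes one toward the uniform path-closure, which is easily seen to contain $132$, while $132$-avoidance pushes one toward nearly-decreasing words, which collapse the cycle into a near-complete graph. The resolution is the specific choice above, and in particular the decreasing-first-occurrence labeling, which is what makes the clean two-case argument of the previous paragraph go through; getting this labeling right (rather than the construction of some representant, which is easy) is where essentially all the difficulty lies. A secondary point worth double-checking is that the same formula behaves correctly in the small cases $C_3=K_3$ and $C_4$, where the pattern of singly- versus doubly-occurring letters degenerates.
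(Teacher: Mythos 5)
Your proof is correct and takes essentially the same approach as the paper, which likewise just exhibits an explicit staircase-type word, namely $(n-1)\,n\,(n-2)\,(n-1)\,(n-3)\,(n-2)\cdots 4\,5\,3\,4\,2\,3\,1\,2$ (the blocks $(m-1)\,m$ read from the top down), representing $C_n$ under the natural clockwise labeling. Your word differs only in the treatment of the top three letters --- your prefix $n\,(n-1)\,(n-2)\,n$ forces the strictly decreasing first-occurrence property on which your $132$-avoidance argument rests, at the cost of a relabeled cycle --- but the underlying strategy, an explicit near-$2$-uniform word verified directly against both the alternation condition and pattern avoidance, is the same.
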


\begin{proof} The cycle graph $C_n$ labeled by $1,2,\ldots,n$ in clockwise direction can be represented by the 132-avoiding word $$(n -1)n(n -2)(n -1)(n - 3)(n - 2)\cdots 45342312.$$
\end{proof}

\begin{theorem}[\cite{GKZ16}]\label{thm-K-n-132}
For $n\geq1$, a complete graph $K_n$ is $132$-representable. Moreover, for $n\geq 3$, there are $$2+C_{n-2}+\sum_{i=0}^{n}C_i$$ different $132$-representants for $K_n$, where $C_n=\frac{1}{n+1}{2n\choose n}$ is the $n$-th Catalan number. Finally, $K_1$ can be represented by a word of the form $11\cdots 1$ and $K_2$ by a word of the form $1212\cdots$ (of even  or odd length) or $2121\cdots$ (of even or odd length).
\end{theorem}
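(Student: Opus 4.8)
The plan is to treat the three assertions separately, putting almost all the work into the enumeration. For the qualitative claim that $K_n$ is $132$-representable, I would exhibit the decreasing word $n(n-1)\cdots 1$: it is a permutation, so all pairs of letters alternate and it represents a complete graph, and being strictly decreasing it has no ascent, hence no occurrence of $132$. For $K_1$ the representants are exactly the words $11\cdots 1$, and for $K_2$ a word over $\{1,2\}$ represents the single edge iff its two letters alternate, i.e.\ iff it reads $1212\cdots$ or $2121\cdots$; every such word avoids $132$ trivially, since a $132$-pattern needs three distinct values. This also explains the restriction $n\ge 3$ in the count: for $n\le 2$ there are infinitely many representants.

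For $n\ge 3$ I would first pin down the shape of a $132$-avoiding representant $w$ of $K_n$. Each vertex of $K_n$ has degree $n-1\ge 2$, and $132$ has length $3=k+1$ with $k=2$, so Theorem~\ref{thm-avoid-gen} forces every letter to occur at most twice. Writing $S$ for the letters occurring twice and $T$ for those occurring once, completeness (every pair alternates) imposes a rigid form: restricting $w$ to $S$ yields a $2$-uniform representant of a complete graph, which must be $\sigma\sigma$ for a permutation $\sigma$ of $S$ (all first occurrences precede all second occurrences, in the same order); and each once-letter must lie strictly between the two occurrences of every twice-letter, hence in the single gap between the two $\sigma$-blocks. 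Thus $w=\sigma\,\tau\,\sigma$ with $\tau$ a permutation of $T$, and conversely every word of this form represents $K_n$. The problem is reduced to counting the words $\sigma\tau\sigma$ avoiding $132$.

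The main tool for the count is the observation that the maximal letter $n$ can only be the peak (the ``$3$'') of a $132$-pattern, so I would split according to whether $n\in T$ (single) or $n\in S$ (double). If $n$ is single, then either $S=\emptyset$ and $w$ is a $132$-avoiding permutation ($C_n$ of them), or $S=\{a\}$ and $w=a\tau a$; here the peak condition on $n$ together with the two roles of the bounding $a$'s forces $\tau$ to be the increasing run $a+1,\dots,n$ followed by a $132$-avoiding arrangement of $\{1,\dots,a-1\}$, contributing $C_{a-1}$, and so $\sum_{a=1}^{n-1}C_{a-1}=\sum_{i=0}^{n-2}C_i$ in total. If $n$ is double, the peak conditions on its two copies force $\sigma$ to have the form $n\,\rho$ or $(n-1)\,n\,\rho$, where $\rho$ arranges a downward-closed set $\{1,\dots,c\}$; moreover $w$ avoids $132$ precisely when these peak conditions hold and the restriction $w|_{[n-1]}=\sigma'\tau\sigma'$ (with $n$ deleted) is itself a valid $132$-representant of $K_{n-1}$. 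The no-prefix form contributes $C_{n-1}+1$ (the subcases $c=0$ and $c=1$; larger $c$ is impossible, since the maximal letter of $w|_{[n-1]}$ is then single and admits at most one doubled letter, while the fully $2$-uniform case $c=n-1$ yields nothing), and the prefix form is in bijection with the no-prefix words one size down, contributing $C_{n-2}+1$.

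Assembling the pieces, the double-$n$ count is $(C_{n-1}+1)+(C_{n-2}+1)=2+C_{n-1}+C_{n-2}$ and the single-$n$ count is $C_n+\sum_{i=0}^{n-2}C_i$, and their sum rearranges to $2+C_{n-2}+\sum_{i=0}^{n}C_i$, as claimed. I expect the main obstacle to be the double-$n$ case: getting the peak conditions exactly right — in particular that the trailing block of $\sigma$ must be a downward-closed initial segment $\{1,\dots,c\}$ rather than merely the minimum of $S$ — and justifying that $132$-avoidance of $w$ decomposes cleanly as ``peak conditions on $n$ plus $132$-avoidance of $w|_{[n-1]}$'', which is exactly what makes the recursion close. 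The reduction to the shape $\sigma\tau\sigma$ and the single-$n$ count are comparatively routine.
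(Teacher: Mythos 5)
The paper states this theorem without proof, citing \cite{GKZ16} and only recording a corollary about lengths of representants, so there is no in-paper argument to compare yours against; judged on its own merits, your proof is correct. The reduction to the shape $\sigma\tau\sigma$ (via Theorem~\ref{thm-avoid-gen}, the crossing condition on the doubled letters, and the fact that each once-occurring letter must sit between both copies of every twice-occurring letter) is sound, the split on whether the maximal letter $n$ is single or doubled is the right organizing principle, and both counts check out: $C_n+\sum_{i=0}^{n-2}C_i$ for $n$ single and $(C_{n-1}+1)+(C_{n-2}+1)$ for $n$ doubled, summing to $2+C_{n-2}+\sum_{i=0}^{n}C_i$ (for $n=3$ this gives $7+5=12$, which agrees with direct enumeration of all cases). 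Three steps are asserted more tersely than they deserve, though all three are true and easily filled. First, that $n$ single forces at most one doubled letter: if $a<b$ were both doubled, then the first copy of $a$, the letter $n$ in $\tau$, and the second copy of $b$ would form a $132$. Second, in the doubled-$n$ case the peak condition at the second copy of $n$ forces the suffix $\rho$ to be contained in $\{1\}$ outright, since every letter of $\{1,\ldots,n-1\}$ occurs somewhere before the second $n$, so any $v\geq 2$ in $\rho$ yields the occurrence $1,n,v$; stating this directly makes your ``larger $c$ is impossible'' and ``fully $2$-uniform case yields nothing'' claims immediate, with no appeal to the single-$n$ analysis needed. Third, for the prefix form $(n-1)n\rho$, the deletion/insertion of $n$ is pattern-preserving in both directions because the peak condition at the first $n$ holds automatically (no value lies strictly between $n-1$ and $n$), which is what makes your bijection with no-prefix words one size down legitimate. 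As a consistency check, your classification of the admissible shapes also recovers the remark in the paper immediately after the theorem: every $132$-representant of $K_n$, $n\geq 3$, has length $n$, $n+1$, $n+2$ or $n+3$.
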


As a corollary to the proof of Theorem~\ref{thm-K-n-132}, \cite{GKZ16} shows that for $n\geq 3$, the length of any $132$-representant of  $K_n$ is either $n$, or $n+1$, or $n+2$, or $n+3$.

\subsection{123-Representable Graphs}

An analogue of Theorem~\ref{132-are-circle} holds for 123-representable graphs.

\begin{theorem}[\cite{M16}]
Any $123$-representable graph is a circle graph. 
\end{theorem}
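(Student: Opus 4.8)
The plan is to follow the same two-step route that underlies Theorem~\ref{132-are-circle} in the $132$ case: first establish the $123$-analogue of Theorem~\ref{thm3-2}, namely that \emph{every $123$-representable graph admits a $123$-avoiding representant in which each letter occurs at most twice}, and then deduce circle-graphness from Theorem~\ref{circle-gr-thm}. Note first that, combining Theorem~\ref{circle-gr-thm} with Theorem~\ref{k-implies-k-plus-1} and the fact that complete graphs are themselves circle graphs, a graph is a circle graph if and only if it is $2$-representable; so it suffices to manufacture a $2$-uniform representant.

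The ``at most twice $\Rightarrow$ circle graph'' implication I would handle cleanly by a variant of the uniformization trick from Theorem~\ref{equiv-thm}. Given a representant $w$ of $G$ in which each letter occurs once or twice, let $\tau$ be the subword of $w$ recording, left to right, the unique occurrences of the letters that appear exactly once, and set $w'=w\tau$. Then $w'$ is $2$-uniform, and one checks case by case that alternation is unchanged: pairs of twice-occurring letters are untouched; a pair of once-occurring letters $x,y$ reads $xyxy$ in $w'$ (their common left-to-right order is repeated by $\tau$), so they stay adjacent, matching the fact that two singletons of $w$ always alternate; and for a singleton $x$ and a twice-occurring $y$, appending one copy of $x$ at the far right sends $yxy$ to $yxyx$ and each of $xyy,\ yyx$ to $xyyx,\ yyxx$, preserving adjacency with $y$ exactly. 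Hence $G$ is $2$-representable, and Theorem~\ref{circle-gr-thm} finishes this step; it is routine.

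The substance lies in the $\le 2$ lemma. Starting from any $123$-avoiding representant $w$ of $G$, I would apply Theorem~\ref{thm-avoid-gen} with $\tau=123$ and $k=2$: every vertex of degree at least $2$ already occurs at most twice in $w$. Thus the only letters that can occur three or more times are vertices of degree $0$ or $1$, and Corollary~\ref{cor-avoid-gen} (with $k=2$) caps a degree-$1$ vertex whose neighbour has degree $\ge 2$ at three occurrences. It then remains to prune the surplus copies of these low-degree vertices down to two. Deleting copies of a single letter $x$ never creates a pattern, so $123$-avoidance is automatically preserved, and such deletions cannot alter the alternation of any pair avoiding $x$; the only risk is that the two retained copies of $x$ accidentally start to alternate with some $y$, giving $x$ the wrong neighbourhood.

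The crux, and the step I expect to be the main obstacle, is to choose the two surviving copies of each repeated low-degree vertex simultaneously and consistently: for an isolated $x$ one must keep two copies lying on the same side of every singleton $y$ and suitably nested relative to every doubled $y$, while for a degree-$1$ vertex one must additionally preserve alternation with its unique neighbour and destroy it with everyone else. Proving that such a choice always exists is exactly where the fine structure of $123$-avoiding words must be exploited: a $123$-avoiding word has no strictly increasing subsequence of length three, which forces its distinct values to be coverable by two decreasing runs and thereby constrains where a degree-$\le 1$ letter may repeat. Once this structural pruning lemma is secured, the two steps combine to show that every $123$-representable graph is a circle graph.
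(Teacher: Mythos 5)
Your two auxiliary steps are fine as far as they go: the padding argument converting a representant in which every letter occurs at most twice into a $2$-uniform representant of the same graph is correct, and so is the application of Theorem~\ref{thm-avoid-gen} with $k=2$, showing that in any $123$-avoiding representant only vertices of degree at most one can occur three or more times. But the proposal stops exactly where the theorem begins. All of the content is concentrated in what you call the structural pruning lemma---that one can simultaneously choose two surviving copies of every repeated low-degree letter so that no alternation is created or destroyed---and you explicitly leave it unproved (``once this structural pruning lemma is secured\dots''). This is a genuine gap, not a routine verification: the risk you yourself flag is real, since the choice of surviving copies matters (in $yxxyx$ with $x$ isolated, keeping the first and third copies of $x$ makes $x$ adjacent to $y$, while keeping the first and second does not), and nothing in the proposal shows that a globally consistent choice exists when several such letters must be pruned at once. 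In effect you have reduced the theorem to an unproved $123$-analogue of Theorem~\ref{thm3-2} that is essentially equivalent to the theorem itself. (For what it is worth, the paper also gives no proof of this statement; it is quoted from \cite{M16}.)

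The gap can be closed, and more easily than by pruning, because pattern-avoidance of the modified word is irrelevant: to conclude circle-graphness you only need \emph{some} representant with at most two copies of each letter. So instead of keeping two copies of each offending letter, delete \emph{all} of them. By Theorem~\ref{thm-avoid-gen}, the deleted letters form a set $S$ of vertices of degree at most one in $G$; by the hereditary property (Remark~\ref{hereditary}) the deletion yields a word representing the induced subgraph $G-S$ in which every letter occurs at most twice, so $G-S$ is $2$-representable by your padding argument and hence a circle graph by Theorem~\ref{circle-gr-thm} (complete graphs are circle graphs as well). Now add the vertices of $S$ back one at a time: each has degree at most one in $G$, hence at most one neighbour among the vertices already present, and circle graphs are closed under adding an isolated vertex (a chord crossing nothing) and under adding a pendant vertex (a short chord placed next to an endpoint of the neighbour's chord, crossing that chord and no other). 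This yields the theorem with no pruning at all, using only the ingredients you already assembled.
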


\begin{figure}
\begin{center}
\begin{tikzpicture}[node distance=1cm,auto,main node/.style={fill,circle,draw,inner sep=0pt,minimum size=5pt}]

\node[main node] (1) {};
\node[main node] (2) [above left of=1] {};
\node[main node] (3) [above right of=1] {};
\node[main node] (4) [above of=1] {};
\node[main node] (5) [below left of=1] {};
\node[main node] (6) [below right of=1] {};
\node[main node] (7) [below of=1] {};

\path
(4) edge (7)
(1) edge (2)
(1) edge (3)
(1) edge (5)
(1) edge (6);

\end{tikzpicture}
\end{center}
\vspace{-5mm}
\caption{Star graph $K_{1,6}$}\label{star-K-1-6}
\end{figure}
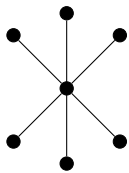

\begin{theorem}[\cite{M16}]
Any cycle graph is $123$-representable.
\end{theorem}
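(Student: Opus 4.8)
Any cycle graph is 123-representable.

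The plan is to mimic the construction that worked in the 132-avoiding case, but build a 123-avoiding word instead. Recall that for 132-representability, the cycle $C_n$ (labelled $1,2,\ldots,n$ clockwise) was represented by the explicit word $(n-1)n(n-2)(n-1)(n-3)(n-2)\cdots 45342312$. That word was a concatenation of length-two blocks, each block contributing one pair of adjacent vertices. First I would write down the analogous explicit candidate word for the 123-avoiding case. The natural guess, since avoiding 123 means avoiding an increasing subsequence of length three, is to take a word that is built from blocks arranged so that the overall sequence has no long ascent: one expects a word roughly of the form where consecutive letters go $23\,12\,34\,23\,\cdots$ reading \emph{increasingly} in label but where the global structure forbids three increasing values. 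Concretely, I would reverse or relabel the 132-avoiding construction and verify which permutation of the block pattern kills all 123 occurrences while preserving exactly the cyclic adjacencies.

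Next I would carry out the two separate verifications that any such explicit representant requires. First, correctness of representation: I must check that for every pair $\{i,j\}$, the two letters $i$ and $j$ alternate in $w$ if and only if $ij$ is an edge of $C_n$, i.e.\ if and only if $j\equiv i\pm 1 \pmod n$. Because each letter appears exactly twice (the word is 2-uniform), alternation of $i$ and $j$ means their four occurrences interleave as $ijij$ or $jiji$. I would exploit the block structure: letters that are ``neighbours'' in the block ordering interleave, while letters two or more apart have both copies of one sandwiched between or outside both copies of the other. The wrap-around edge (connecting $n$ and $1$) is the delicate case and must be handled by the placement of the boundary blocks; this is exactly where the 132-construction used its first and last blocks, and I would confirm the 123-analogue does the same.

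Second, the pattern-avoidance verification: I must show $w$ contains no subsequence order-isomorphic to $123$, that is, no three positions $i_1<i_2<i_3$ with $w_{i_1}<w_{i_2}<w_{i_3}$. Here I would argue structurally from the block decomposition — each block contributes at most a limited ascent, and the block sequence is arranged so that values decrease (or stay controlled) across block boundaries, capping the longest increasing subsequence at length two. I expect \textbf{this avoidance check to be the main obstacle}, because it is a global property of the whole word rather than a local one: an increasing triple could in principle span three different blocks, so I cannot verify it block-by-block in isolation. The cleanest way to dispatch it is to exhibit an explicit two-part decomposition of the positions of $w$ into two weakly-decreasing (or suitably monotone) subsequences via the Erd\H{o}s--Szekeres / Dilworth viewpoint: if the positions of $w$ can be covered by two decreasing subsequences, then the longest increasing subsequence has length at most two, forcing 123-avoidance. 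Once the candidate word is pinned down and this decreasing-cover is displayed, both the adjacency and the avoidance claims reduce to routine index-chasing, completing the proof.
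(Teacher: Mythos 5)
Your proposal has a genuine gap: it never produces the object whose existence is the entire content of the theorem. The paper's proof is a one-line \emph{exhibition} of an explicit word (a block-structured word of the form $n(n-1)n(n-2)(n-1)(n-3)(n-2)\cdots 23121$) together with the implicit routine check that it is $123$-avoiding and represents $C_n$. Your text instead describes how you \emph{would} find such a word (``the natural guess\ldots'', ``I would reverse or relabel the 132-avoiding construction\ldots'') and how you \emph{would} carry out the two verifications, ending with ``once the candidate word is pinned down\ldots routine index-chasing, completing the proof.'' But no candidate word is ever pinned down, and neither the alternation check nor the avoidance check is performed; for an existence theorem proved by construction, a plan for a construction is not a proof. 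The omission is not harmless bookkeeping, because the step you defer is exactly where naive guesses fail: the wrap-around edge $\{n,1\}$ you flag as ``delicate'' kills most block-words of this shape (indeed, in the survey's own displayed word, as printed, both copies of $n$ precede both copies of $1$, so those two letters do not alternate and the word would represent the path $P_n$ rather than the cycle --- evidently a typo inherited in transcription, and a vivid illustration of why the verification must actually be done).

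The good news is that your outline contains every ingredient needed to finish, and the cleanest completion is one you half-suggest: reuse, verbatim, the $132$-word from the preceding theorem, $w=(n-1)n(n-2)(n-1)(n-3)(n-2)\cdots 45342312$. By that theorem $w$ represents $C_n$ (in particular the letters $n$ and $1$ occur exactly once each, so they alternate trivially, which is precisely how the wrap-around edge is secured). And $w$ avoids $123$ by exactly your Erd\H{o}s--Szekeres/Dilworth argument: its positions split into two strictly decreasing subsequences, namely the first letters of the two-letter blocks, $(n-1),(n-2),\ldots,1$, and the second letters, $n,(n-1),\ldots,2$, so the longest strictly increasing subsequence of $w$ has length $2$. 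Writing down this word, citing its alternation properties from the $132$ theorem, and displaying the two-chain cover would turn your sketch into a complete and correct proof --- one differing from the paper's only in that it uses a non-uniform word (with $n$ and $1$ occurring once) rather than a $2$-uniform one.
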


\begin{proof} The cycle graph $C_n$ labeled by $1,2,\ldots,n$ in clockwise direction can be represented by the 123-avoiding word $$n(n -1)n(n - 2)(n - 1)(n - 3)(n - 1) \cdots 23121.$$
\end{proof}

\begin{theorem}[\cite{M16}]\label{K-1-6}
The star graph $K_{1,6}$ in Figure~\ref{star-K-1-6} is not $123$-representable. 
\end{theorem}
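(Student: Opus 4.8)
The plan is to argue by contradiction, assuming that some $123$-avoiding word $w$ represents $K_{1,6}$. Label the center of the star by $c$ and the six leaves by $1,2,\ldots,6$. The defining property is that each leaf $i$ alternates with $c$ in $w$, while the leaves pairwise do \emph{not} alternate with one another. The first step is to extract structural constraints from the fact that $w$ avoids $123$: an increasing subsequence of length $3$ is forbidden, so every value in $w$ lies in one of at most two ``decreasing runs'' in a suitable sense (more precisely, the positions of left-to-right occurrences are tightly controlled). In particular, Theorem~\ref{thm-avoid-gen} and Corollary~\ref{cor-avoid-gen} apply here with $k=2$: since the center $c$ has degree $6\geq 2$, the center occurs at most twice; and since each leaf is adjacent to $c$ whose degree is at least $2$, each leaf occurs at most $k+1=3$ times. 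This bounds the total length of $w$ and makes a finite case analysis conceivable, but I would prefer a cleaner combinatorial argument.

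The cleaner route is to exploit that the six leaves are pairwise non-adjacent (they form an independent set $E_6$) while each must alternate with $c$. First I would pin down how a single letter can alternate with $c$ when $c$ occurs at most twice: if $c$ occurs once, say $w = \alpha c \beta$, then a leaf $i$ alternates with $c$ iff $i$ occurs exactly once in $w$, on exactly one side; if $c$ occurs twice, say $w=\alpha c \beta c \gamma$, the alternation condition forces each leaf to have a very restricted occurrence pattern relative to the two copies of $c$. In either case I would show that among the six leaves, at least two of them — call them $x$ and $y$ — must occur only once in $w$, or more generally must appear in a configuration (for instance both to the left of the single/first $c$, in increasing relative order, or split across $c$) that forces an increasing triple together with $c$ or with a third leaf. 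The key tension is this: to make six distinct letters each alternate with $c$ using so few copies of $c$, many leaves are squeezed onto the same side of a copy of $c$, and once three letters sit on one side in the wrong order relative to a later large letter, the pattern $123$ appears.

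The main obstacle, and the heart of the proof, will be showing that one \emph{cannot} simultaneously (i) keep all six leaves pairwise non-alternating, (ii) keep each leaf alternating with the scarce copies of $c$, and (iii) avoid $123$. I expect the decisive step to be a counting/pigeonhole argument: with $c$ occurring at most twice, the word is partitioned into at most three blocks by the copies of $c$, and the requirement that each of the six leaves alternate with $c$ forces enough leaves into a common block that a $123$ pattern becomes unavoidable (either among three leaves, or between two leaves and a copy of $c$). Making the pigeonhole precise — correctly enumerating which occurrence patterns of a leaf are compatible with alternation for each of the ``$c$ occurs once'' and ``$c$ occurs twice'' cases, and checking that avoiding $123$ collapses the number of admissible leaves below six — is where the real work lies; the underlying difficulty is that the $123$-avoidance constraint and the alternation constraint pull the leaves in opposite directions, and one must show these cannot be reconciled for as many as six leaves. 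Once the incompatibility is established in every case, the contradiction completes the proof that $K_{1,6}$ is not $123$-representable.
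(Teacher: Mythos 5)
Your opening moves are correct: with $\tau=123$ we have $k=2$, so Theorem~\ref{thm-avoid-gen} forces the centre $c$ (degree $6\geq 2$) to occur at most twice, and Corollary~\ref{cor-avoid-gen} forces each leaf to occur at most three times; this is indeed how the argument in \cite{M16} begins (the paper above states the theorem without proof, deferring to that reference). But beyond this point what you have written is a plan, not a proof: the decisive step --- showing that six leaves cannot simultaneously alternate with the scarce copies of $c$, remain pairwise non-alternating, and avoid $123$ --- is exactly what you defer as ``where the real work lies.'' That case analysis \emph{is} the theorem. Note that $K_{1,5}$ is $123$-representable, so any correct argument must genuinely use the number six, and it must work for every assignment of the seven numerical labels (labelling matters for pattern-avoiding representation, as the paper stresses); your sketch never engages with the values of the letters, which is where the $123$ patterns actually come from.

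Moreover, the one concrete structural claim you do make is false. If $c$ occurs once, $w=\alpha c\beta$, a leaf $i$ alternates with $c$ if and only if $i$ occurs exactly once (on either side) \emph{or} occurs exactly twice with one copy in $\alpha$ and one in $\beta$ (the restriction $ici$ alternates). This matters: since two letters that each occur exactly once always alternate, and the leaves are pairwise non-adjacent, at most one leaf can occur exactly once; hence in the ``$c$ occurs once'' case at least five leaves must straddle $c$, and pairwise non-alternation forces them to be nested, i.e.\ their order before $c$ is the reverse of their order after $c$. That nesting is what kills this case: any increasing pair of values in the first half that does not end at the maximal leaf value creates a $123$ with the second half (which repeats all values), so the first half minus the maximum must be decreasing; but then the second half contains an increasing triple of values, a $123$ pattern. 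Your incorrect characterization would have short-circuited this case for the wrong reason, and the analogous analysis for ``$c$ occurs twice'' (where alternation forces every leaf to have exactly one copy strictly between the two $c$'s, and non-alternation plus $123$-avoidance must then be played off against each other) is entirely absent. Until both cases are carried out, the proposal is an outline of the known strategy rather than a proof.
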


It is easy to see that $K_{1,6}$ is a circle graph, and thus not all circle graphs are 123-representable by Theorem~\ref{K-1-6}. Also, by Theorem~\ref{K-1-6}, not all trees are 123-representable. 

Based on Theorems \ref{two-K4} and \ref{K-1-6}, it is easy to come up with a circle graph on 14 vertices that is neither 123- nor 132-representable (see \cite{M16}). 

As opposed to the situation with 132-representation discussed in Section~\ref{132-rep-subsec}, any complete graph $K_n$ can be represented by the 123-avoiding 2-uniform word $n(n-1)\cdots 1n(n-1)\cdots 1$ as observed in \cite{M16}. Also, it was shown in \cite{M16} that any path graph $P_n$ can be 123-represented by a 2-uniform word. We conclude with a general type theorem on uniform representation applicable to both 123- and 132-representations. 

\begin{theorem}[\cite{M16}]\label{comp-123-132} Let a pattern $\tau\in\{123,132\}$ and $G_1$, $G_2,\ldots, G_k$ be $\tau$-representable connected components of a graph $G$. Then $G$ is $\tau$-representable if and only if at most one of the connected components cannot be $\tau$-represented by a $2$-uniform word. \end{theorem}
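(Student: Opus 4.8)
The plan is to prove both implications by relating, in a $\tau$-avoiding representation, the number of copies of each letter to the \emph{absence} of cross-component edges. The argument rests on two elementary facts. First, deleting letters from a word never creates a new occurrence of a pattern, so any subword of a $\tau$-avoiding word is again $\tau$-avoiding. Second, alternation of two letters $x,y$ depends only on the subword obtained by keeping the copies of $x$ and $y$, so restricting a representation $w$ of $G$ to the letters of one component $G_\ell$ yields a word $w^{(\ell)}$ that $\tau$-represents $G_\ell$ (no edge leaves a component). The key technical statement I would isolate is a \emph{reduction lemma}: if a graph admits a $\tau$-avoiding representation in which every letter occurs at least twice, then it admits a $2$-uniform $\tau$-avoiding representation. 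Granting this, both directions follow.

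For the ``if'' direction, suppose at most one component, say $G_k$, fails to be $2$-uniformly $\tau$-representable, and fix $2$-uniform $\tau$-avoiding words $w_1,\dots,w_{k-1}$ for $G_1,\dots,G_{k-1}$ together with an arbitrary $\tau$-avoiding word $w_k$ for $G_k$. I would relabel vertices so that the value ranges of the components are pairwise disjoint and \emph{strictly decreasing} in the order $1,2,\dots,k$, and then form $w=w_1w_2\cdots w_k$. Two checks remain. No spurious edge appears: for $i<j$, every letter of $w_i$ occurs at least twice and all its copies precede every copy of any letter of $w_j$, so such a pair cannot alternate. And $w$ avoids $\tau$: in either pattern $123$ or $132$ the leftmost position carries the strictly smallest value; since positions respect component order and later components carry strictly smaller values, the leftmost letter of a putative occurrence cannot be the minimum unless all three positions lie in a single $w_i$, which is impossible because each $w_i$ avoids $\tau$. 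Finally, restricting $w$ to the letters of $G_\ell$ returns $w_\ell$ up to an order-isomorphism, so $w$ represents $G$.

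For the ``only if'' direction I argue through the reduction lemma. Let $w$ be any $\tau$-avoiding representation of $G$. The crucial observation is that \emph{at most one} component can contain a letter occurring exactly once in $w$: if $x\in G_i$ and $y\in G_j$ with $i\neq j$ both occur exactly once, then the subword on $\{x,y\}$ has length two and is therefore alternating, forcing the forbidden edge $xy$. Consequently, for all but at most one component $G_\ell$, every letter of $G_\ell$ occurs at least twice in $w$, so $w^{(\ell)}$ is a $\tau$-avoiding representation of $G_\ell$ with all multiplicities at least two; the reduction lemma then makes $G_\ell$ $2$-uniformly $\tau$-representable. Hence at most one component can fail to be $2$-uniformly $\tau$-representable, as required.

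The main obstacle is the reduction lemma itself. Since deletion cannot create a pattern, $\tau$-avoidance is automatic; the difficulty is to delete surplus copies of a letter $x$, one at a time down to multiplicity two, \emph{without disturbing, for any other letter $y$, whether $x$ and $y$ alternate} — removing an interior copy of $x$ can turn an alternating pair into a non-alternating one and vice versa. I would attack this by analyzing, for a letter $x$ with $m\ge 3$ copies, the blocks of $w^{(\ell)}$ cut out by consecutive copies of $x$, and locating a copy whose removal preserves every such alternation relation, using the $\tau$-avoidance of $w^{(\ell)}$ to constrain how repeated letters are distributed among these blocks. Verifying that such a removable copy always exists — equivalently, that the two patterns $123$ and $132$ are rigid enough to force it — is where the real work lies, and where a case analysis on $\tau\in\{123,132\}$ seems unavoidable.
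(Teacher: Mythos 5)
Your ``if'' direction is essentially correct and is the standard argument: after relabelling the components so their value ranges are disjoint and strictly decreasing, the concatenation $w_1w_2\cdots w_k$ creates no cross-component alternation (every letter of a non-final block occurs exactly twice, and both copies precede all letters of later blocks), and it avoids $\tau$ because both $123$ and $132$ carry their strictly smallest value in the leftmost position, so no occurrence can straddle two blocks. Your opening observation in the ``only if'' direction is also correct and is a real step: two letters occurring exactly once in distinct components would alternate trivially and force a forbidden cross-component edge, so at most one component can contain a singleton letter, and the restrictions $w^{(\ell)}$ of all other components are $\tau$-avoiding representations with all multiplicities at least two.

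The gap is exactly the statement you call the reduction lemma: that a $\tau$-avoiding representation in which every letter occurs at least twice can be converted into a $2$-uniform $\tau$-avoiding representation of the same graph. This is not a technical afterthought that can be deferred --- it is the mathematical core of the harder direction, and you leave it unproven, offering only a plan (``locate a removable copy'') with no argument that such a copy exists. Deleting an occurrence of a letter can both destroy and create alternations, and simple-minded versions of the trimming demonstrably fail: keeping the first two occurrences of each letter turns the non-alternating restriction $xyxyyx$ into the alternating word $xyxy$, creating a spurious edge. The general uniformization tool of the theory (Theorem~\ref{equiv-thm}, prepending initial permutations) is explicitly unavailable here since, as noted after Theorem~\ref{132-trees-rep}, it can introduce occurrences of the pattern; and the example of $K_n$, $n\geq 3$, which is $132$-representable but admits no $2$-uniform $132$-avoiding representant, shows that any such lemma must exploit the hypothesis that all multiplicities are at least two in an essential way --- it cannot follow from soft considerations. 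The present paper does not reproduce a proof of this theorem (it cites \cite{M16}), and the substance of the proof in \cite{M16} is precisely the structural analysis of $123$- and $132$-avoiding words that your reduction lemma would require; until you supply that analysis, or an alternative route from ``all letters of $G_\ell$ occur at least twice in some $\tau$-avoiding representation'' to ``$G_\ell$ is $2$-uniformly $\tau$-representable,'' the ``only if'' half of the theorem remains unproved.
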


\section{Semi-transitive Orientations as a Key Tool in the Theory of Word-Representable Graphs}\label{sec4}

Recall the definition of a transitive orientation at the beginning of Section~\ref{perm-rep-graphs-sec}.

A {\em shortcut} is an {\em acyclic non-transitively oriented} graph obtained from a directed cycle graph forming a directed cycle on at least four vertices by changing the orientation of one of the edges, and possibly by adding more directed edges connecting some of the vertices (while keeping the graph be acyclic and non-transitive). Thus, any shortcut  

\begin{itemize}

\item is {\em acyclic} (that it, there are {\em no directed cycles});

\item has {\em at least} 4 vertices;

\item has {\em exactly one} source (the vertex with no edges coming in), {\em exactly one} sink (the vertex with no edges coming out), and a {\em directed path} from the source to the sink that goes through {\em every} vertex in the graph;

\item has an edge connecting the source to the sink that we refer to as the {\em shortcutting edge};

\item is {\em not} transitive (that it, there exist vertices $u$, $v$ and $z$ such that $u\rightarrow v$ and $v\rightarrow z$ are edges, but there is {\em no} edge $u\rightarrow z$).

\end{itemize}

\begin{definition} An orientation of a graph is {\em semi-transitive} if it is {\em acyclic} and 
{\em shortcut-free}.\end{definition}

It is easy to see from definitions that {\em any} transitive orientation is necessary  semi-transitive. The converse is {\em not} true, e.g.\ the following schematic semi-transitively oriented graph is {\em not} transitively oriented: 

\vspace{-3mm}

\begin{center}

\begin{tikzpicture}[->,>=stealth', shorten >=1pt, node distance=2.5cm,auto,main node/.style={fill,circle,draw,inner sep=0pt,minimum size=5pt}]

\node[main node] (1) {};
\node (a) [right of=1] {transitively oriented};
\node[main node] (2) [right of=a] {};
\node (b) [right of=2] {transitively oriented};
\node[main node] (3) [right of=b] {};

\path
(1) edge [bend right=20] node  {} (2)
      edge [bend left=20] node  {} (2);

\path
(2) edge [bend right=20] node  {} (3)
      edge [bend left=20] node  {} (3);

\end{tikzpicture}

\vspace{-3mm}

\end{center}

Thus semi-transitive orientations generalize transitive orientations.

A way to check if a given oriented graph $G$ is semi-transitively oriented is as follows. First  check that $G$ is acyclic; if not, the orientation is not semi-transitive. Next, for a directed edge from a vertex $x$ to a vertex $y$, consider {\em each} directed path $P$ having at least three edges without repeated vertices from $x$ to $y$, and check that the subgraph of $G$ induced by $P$ is transitive. If such non-transitive subgraph is found, the orientation is not semi-transitive. This procedure needs to be applied to each edge in $G$, and if no non-transitivity is discovered, $G$'s orientation is semi-transitive. 

As we will see in Theorem~\ref{main-charact}, finding a semi-transitive orientation is equivalent to recognising whether a given graph is word-representable, and this is an NP-hard problem (see Theorem~\ref{recogn-word-repr-NP-complete}). Thus, there is no efficient way to construct a semi-transitive orientation in general, and such a construction would rely on an exhaustive search orienting edges one by one, and thus branching the process. Having said that, there are several situations in which branching is not required. For example, the orientation of the partially oriented triangle below can be completed uniquely to avoid a cycle:

 \begin{center}
\begin{tabular}{ccc}

\begin{tikzpicture}[node distance=1cm,auto,main node/.style={fill,circle,draw,inner sep=0pt,minimum size=5pt}]

\node[main node] (1) {};
\node[main node] (2) [below left of=1] {};
\node[main node] (3) [below right of=1] {};

\path
(2) edge node  {} (3);

\path
(2) [->,>=stealth', shorten >=1pt] edge node  {} (1);

\path     
(1) [->,>=stealth', shorten >=1pt] edge node  {} (3);     

\end{tikzpicture}

&

\begin{tikzpicture}[node distance=0.4cm,auto,main node/.style={inner sep=0pt,minimum size=5pt}]

\node [main node] (1) {\ $\Rightarrow$ \ };

\node[main node] (2) [below of=1] {};

\end{tikzpicture}

&

\begin{tikzpicture}[node distance=1cm,auto,main node/.style={fill,circle,draw,inner sep=0pt,minimum size=5pt}]

\node[main node] (1) {};
\node[main node] (2) [below left of=1] {};
\node[main node] (3) [below right of=1] {};

\path
(2) [->,>=stealth', shorten >=1pt] edge node  {} (1)
     edge node  {} (3);

\path     
(1) [->,>=stealth', shorten >=1pt] edge node  {} (3);     

\end{tikzpicture}

\end{tabular}

\end{center}

For another example,  the branching process can normally be shorten e.g.\ by completing the orientation of quadrilaterals as shown in Figure~\ref{completingParal}, which is {\em unique} to avoid cycles and shortcuts (the diagonal in the last case may require branching).

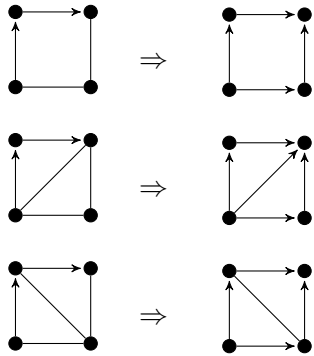
\begin{figure}
\begin{center}
\begin{tabular}{ccc}

\begin{tikzpicture}[node distance=1cm,auto,main node/.style={fill,circle,draw,inner sep=0pt,minimum size=5pt}]

\node[main node] (1) {};
\node[main node] (2) [below of=1] {};
\node[main node] (3) [right of=1] {};
\node[main node] (4) [below of=3] {};

\path
(4) edge node  {} (2)
      edge node {} (3);

\path
(2) [->,>=stealth', shorten >=1pt] edge node  {} (1);

\path     
(1) [->,>=stealth', shorten >=1pt] edge node  {} (3);     

\end{tikzpicture}

&

\begin{tikzpicture}[node distance=0.5cm,auto,main node/.style={inner sep=0pt,minimum size=5pt}]

\node [main node] (1) {\ $\Rightarrow$ \ };

\node[main node] (2) [below of=1] {};

\end{tikzpicture}

&

\begin{tikzpicture}[node distance=1cm,auto,main node/.style={fill,circle,draw,inner sep=0pt,minimum size=5pt}]

\node[main node] (1) {};
\node[main node] (2) [below of=1] {};
\node[main node] (3) [right of=1] {};
\node[main node] (4) [below of=3] {};
\node (5) [below of=2,yshift=0.9cm]{};

\path
(4) [->,>=stealth', shorten >=1pt] edge node  {} (3);    

\path
(2) [->,>=stealth', shorten >=1pt] edge node  {} (1);

\path
(2) [->,>=stealth', shorten >=1pt] edge node  {} (4);

\path     
(1) [->,>=stealth', shorten >=1pt] edge node  {} (3);     

\end{tikzpicture}

\end{tabular}

\begin{tabular}{ccc}

\begin{tikzpicture}[node distance=1cm,auto,main node/.style={fill,circle,draw,inner sep=0pt,minimum size=5pt}]

\node[main node] (1) {};
\node[main node] (2) [below of=1] {};
\node[main node] (3) [right of=1] {};
\node[main node] (4) [below of=3] {};

\path
(2) edge node  {} (3);

\path
(4) edge node  {} (2)
      edge node {} (3);

\path
(2) [->,>=stealth', shorten >=1pt] edge node  {} (1);

\path     
(1) [->,>=stealth', shorten >=1pt] edge node  {} (3);     

\end{tikzpicture}

&

\begin{tikzpicture}[node distance=0.5cm,auto,main node/.style={inner sep=0pt,minimum size=5pt}]

\node [main node] (1) {\ $\Rightarrow$ \ };

\node[main node] (2) [below of=1] {};

\end{tikzpicture}

&

\begin{tikzpicture}[node distance=1cm,auto,main node/.style={fill,circle,draw,inner sep=0pt,minimum size=5pt}]

\node[main node] (1) {};
\node[main node] (2) [below of=1] {};
\node[main node] (3) [right of=1] {};
\node[main node] (4) [below of=3] {};
\node (5) [below of=2,yshift=0.9cm]{};

\path
(2)  [->,>=stealth', shorten >=1pt]  edge node  {} (3);

\path
(4) [->,>=stealth', shorten >=1pt] edge node  {} (3);    

\path
(2) [->,>=stealth', shorten >=1pt] edge node  {} (1);

\path
(2) [->,>=stealth', shorten >=1pt] edge node  {} (4);

\path     
(1) [->,>=stealth', shorten >=1pt] edge node  {} (3);     

\end{tikzpicture}

\end{tabular}

\begin{tabular}{ccc}

\begin{tikzpicture}[node distance=1cm,auto,main node/.style={fill,circle,draw,inner sep=0pt,minimum size=5pt}]

\node[main node] (1) {};
\node[main node] (2) [below of=1] {};
\node[main node] (3) [right of=1] {};
\node[main node] (4) [below of=3] {};

\path
(4) edge node  {} (2)
      edge node {} (3)
      edge node {} (1);

\path
(2) [->,>=stealth', shorten >=1pt] edge node  {} (1);

\path     
(1) [->,>=stealth', shorten >=1pt] edge node  {} (3);     

\end{tikzpicture}

&

\begin{tikzpicture}[node distance=0.5cm,auto,main node/.style={inner sep=0pt,minimum size=5pt}]

\node [main node] (1) {\ $\Rightarrow$ \ };

\node[main node] (2) [below of=1] {};

\end{tikzpicture}

&

\begin{tikzpicture}[node distance=1cm,auto,main node/.style={fill,circle,draw,inner sep=0pt,minimum size=5pt}]

\node[main node] (1) {};
\node[main node] (2) [below of=1] {};
\node[main node] (3) [right of=1] {};
\node[main node] (4) [below of=3] {};
\node (5) [below of=2,yshift=0.9cm]{};

\path
(4) edge node  {} (1);

\path
(4) [->,>=stealth', shorten >=1pt] edge node  {} (3);    

\path
(2) [->,>=stealth', shorten >=1pt] edge node  {} (1);

\path
(2) [->,>=stealth', shorten >=1pt] edge node  {} (4);

\path     
(1) [->,>=stealth', shorten >=1pt] edge node  {} (3);     

\end{tikzpicture}

\end{tabular}
\end{center}
\vspace{-5mm}
\caption{Completing orientations of quadrilaterals}\label{completingParal}
\end{figure}

The main characterization theorem to date for word-representable graphs is the following result.

\begin{theorem}[\cite{HKP16}]\label{main-charact} A graph $G$ is word-representable if and only if $G$ admits a semi-transitive orientation.  \end{theorem}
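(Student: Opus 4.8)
The plan is to prove the two implications separately. The direction ``word-representable $\Rightarrow$ semi-transitive orientation'' is the more routine one, while ``semi-transitive orientation $\Rightarrow$ word-representable'' requires an explicit, inductive construction and is where the real work lies. Throughout I would keep in mind that transitive orientations already yield representants (permutations, via Theorem~\ref{Kit-Sei-thm}), so the whole point is to see how shortcut-freeness, a genuine relaxation of transitivity, still suffices.

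For the forward direction, I would start from a word-representant and, invoking Theorem~\ref{equiv-thm}, replace it by a $k$-uniform word $w$ representing $G$. I would then orient $G$ by declaring, for each edge $xy\in E$, that $x\to y$ precisely when the leftmost copy of $x$ precedes the leftmost copy of $y$ in $w$; since $x$ and $y$ alternate, the induced subword is $xyxy\cdots$, so this is well defined. Acyclicity is immediate: mapping each vertex to the position of its first occurrence gives a function strictly increasing along every directed edge, so a directed cycle would force a strict inequality to close up on itself, a contradiction. It remains to show the orientation is shortcut-free. For this I would argue by contradiction: assume the orientation contains a shortcut, with source $a$, sink $b$, shortcutting edge $a\to b$, and a directed path $a=c_0\to c_1\to\cdots\to c_t=b$ with $t\geq 3$ through all vertices of the shortcut, the induced orientation being non-transitive. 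Along this path the first-occurrence positions strictly increase, so $a$'s first copy precedes every $c_i$'s, which precedes $b$'s. The heart of the argument is to exploit that $a$ and $b$ are adjacent, hence alternate, and to track the positions of the copies of the intermediate vertices relative to the successive copies of $a$ and of $b$; non-transitivity supplies a pair $c_i,c_j$ with $i<j$ that are non-adjacent, and I would show that alternation of $a$ with $b$ together with the path edges forces $c_i$ and $c_j$ to alternate as well, i.e.\ to be adjacent, contradicting the choice of the shortcut.

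For the backward direction, given a semi-transitive orientation I would build a word by induction on $|V|$, with trivial base cases. Since the orientation is acyclic it admits a topological order; I would remove an extreme vertex $v$, obtain by induction a word $w'$ representing the induced (still semi-transitively oriented) graph $G-v$, and then insert the copies of $v$ into $w'$ so that $v$ alternates with exactly its neighbours. The placement and number of inserted copies are dictated by the in- and out-neighbourhood of $v$: semi-transitivity is exactly the property guaranteeing that these neighbours can be ``threaded'' through $w'$ without creating a spurious alternation with a non-neighbour, so that a valid insertion exists. I would use Proposition~\ref{cyclic-shift} to normalise $w'$ to a uniform word whenever this simplifies describing the insertion points.

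The main obstacle, in both directions, is the interplay between the shortcut condition and alternation. In the forward direction it is verifying that a shortcut cannot survive in any representing word; in the backward direction it is proving that shortcut-freeness is precisely what permits the inductive insertion to go through. I expect the backward construction to be the most delicate: one must specify exactly where and how many copies of $v$ to place, and then prove that the absence of shortcuts rules out every way in which the insertion could either \emph{miss} a required alternation with a neighbour or \emph{create} a forbidden one with a non-neighbour.
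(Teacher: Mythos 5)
Your forward direction is essentially the paper's own (and \cite{HKP16}'s) argument: pass to a uniform word via Theorem~\ref{equiv-thm}, orient by leftmost occurrence, get acyclicity from monotonicity of first occurrences, and kill shortcuts by chaining the interleaving inequalities (for adjacent $x\to y$ in a $k$-uniform word one has $x_i<y_i<x_{i+1}$ for every $i$; chaining these along the path and through the shortcutting edge forces every pair of path vertices to alternate, so the induced subgraph is transitive). The paper only gives an example here, and your outline fills in the right details.

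The backward direction, however, contains a genuine gap, and it is exactly the part the paper calls ``rather complicated'' and omits. Your plan is: remove a topologically extreme vertex $v$, obtain by induction a word $w'$ representing $G-v$, then \emph{insert copies of $v$ into $w'$}. Insertion into a fixed $w'$ is in general impossible, so the inductive hypothesis ``some word represents $G-v$'' is too weak. Concrete obstruction: take $G=G_5$, the crown graph $H_{5,5}$ with an apex $v$, oriented semi-transitively with all edges leaving $v$ and $H_{5,5}$ oriented transitively; then $v$ is the unique source, so your induction removes it and must re-insert it into a word $w'$ representing $H_{5,5}$. But if $v$ occurs $k$ times in a word $w$ representing $G$, the copies of $v$ cut $w$ into blocks, and since $v$ is adjacent to everything, every internal block must contain each other letter exactly once; that is, $w$ with $v$ deleted is forced to be (up to two partial outer blocks) a concatenation of permutations. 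The $\lceil 5/2\rceil$-uniform representant of $H_{5,5}$ guaranteed by Theorem~\ref{crown-thm-main} cannot have this form --- otherwise inserting $v$ between the blocks would $3$-represent $G_5$, contradicting its much larger representation number --- so for that choice of $w'$ \emph{no} placement of copies of $v$ works. Hence the real proof must maintain a far stronger invariant (a uniform word of a specific structure, compatible with the given orientation), and the inductive step must rebuild the word, adding copies of the \emph{old} letters as well; this is why the construction of \cite{HKP16} yields the bound $2(n-\kappa(G))$ of Theorem~\ref{main-length} rather than a constant number of copies. Your sentence ``semi-transitivity is exactly the property guaranteeing that these neighbours can be threaded through $w'$'' is the theorem restated, not an argument. (A minor slip: Proposition~\ref{cyclic-shift} does not ``normalise $w'$ to a uniform word''; it only says cyclic shifts of already-uniform words represent the same graph. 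Uniformization is Theorem~\ref{equiv-thm}, and it would not repair the obstruction above anyway.)
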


\begin{proof}
The backwards direction is rather complicated and is omitted. An algorithm was created in \cite{HKP16} to turn a semi-transitive orientation of a graph into a word-representant.

The idea of the proof for the forward direction is as follows (see \cite{HKP16} for details). Given a word, say, $w=2421341$, orient the graph represented by $w$ by letting $x\rightarrow y$ be an edge if the {\em leftmost} $x$ is {\em to the left} of the {\em leftmost} $y$ in $w$, to obtain a semi-transitive orientation:

\begin{center}

\begin{tikzpicture}[->,>=stealth', shorten >=1pt, node distance=1cm,auto,main node/.style={circle,draw,inner sep=1pt,minimum size=5pt}]

\node[main node] (1) {1};
\node[main node] (2) [right of=1] {3};
\node[main node] (3) [below of=1] {4};
\node[main node] (4) [right of=3] {2};

\path
(1) edge node  {} (2);

\path
(3) edge node  {} (1)
      edge node  {} (2);

\path
(4) edge node  {} (3);

\end{tikzpicture}

\end{center}
\vspace{-8mm}
\end{proof}

Any {\em complete graph} is 1-representable. The algorithm in \cite{HKP16} to turn semi-transitive orientations into word-representants gave the following result.

\begin{theorem}[\cite{HKP16}]\label{main-length}
Each {\em non-complete} word-representable graph $G$ is $2(n-\kappa(G))$- representable, where $\kappa(G)$ is the size of the {\em maximum clique} in~$G$. 
\end{theorem}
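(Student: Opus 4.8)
The plan is to use Theorem~\ref{main-charact} to trade the combinatorial object ``word'' for the geometric object ``semi-transitive orientation,'' and then to control the length of the word produced when one converts such an orientation back into a representant. First I would fix a semi-transitive orientation $D$ of $G$, which exists because $G$ is word-representable. Since $D$ is acyclic, its restriction to any clique is a transitive tournament; in particular a \emph{maximum} clique $C$, with $|C|=\kappa(G)=:\kappa$, is oriented as a directed chain $c_1\to c_2\to\cdots\to c_\kappa$. This chain is already transitive and therefore represents the ``cheapest'' part of a representation: a single permutation of the clique letters, in chain order, suffices for $C$. The remaining $n-\kappa$ vertices are precisely the ones I will charge for the repeated copies, and the factor $2$ in the bound is meant to come from the fact that each such vertex forces at most two ``passes'' through the letters. (Note that for a complete graph $n-\kappa=0$, which is why the statement excludes that case; complete graphs are $1$-representable.)

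Next I would build the representant incrementally, refining the bookkeeping of the algorithm of \cite{HKP16} that turns a semi-transitive orientation into a word-representant. Concretely, I would start from a one-permutation representant of the transitively oriented clique $C$ and then insert the $n-\kappa$ remaining vertices one at a time, in an order consistent with a topological sort of $D$. When a new vertex $v$ is inserted, its copies must be placed so that $v$ alternates with exactly its neighbours in $G$, while no alternation among the previously placed letters is destroyed or spuriously created. The counting goal is to show that the non-clique vertices are the only ones forcing repetitions, and that a careful accounting charges each of them at most two to the final per-letter multiplicity; this yields a word in which every letter occurs at most $2(n-\kappa)$ times. One then pads up to an exactly $2(n-\kappa)$-uniform word using the uniformization procedure from the proof of Theorem~\ref{equiv-thm} together with Theorem~\ref{k-implies-k-plus-1}, which add copies to under-represented letters without changing the represented graph.

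The delicate point — and what I expect to be the main obstacle — is verifying correctness of each insertion \emph{simultaneously} with the multiplicity bound: one must guarantee that inserting $v$ using only a bounded number of fresh copies of the ambient letters produces alternation with precisely the in- and out-neighbours of $v$, and with no other vertex. This is exactly where the \emph{shortcut-free} hypothesis is essential; for a merely acyclic orientation the local insertion can fail, which is why semi-transitivity rather than mere acyclicity is needed. I would therefore isolate a lemma asserting that, in a semi-transitive orientation, the in-neighbourhood and out-neighbourhood of any vertex admit a consistent ``two-layer'' placement relative to the already-constructed word, and a second lemma ensuring that a non-edge $uv$ never becomes accidentally alternating during later insertions — both of which reduce to applying shortcut-freeness to the directed paths passing through the vertices involved. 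Establishing these two lemmas is the technical heart of the argument; once they are in place, the accounting that produces the bound $2(n-\kappa(G))$ is essentially immediate.
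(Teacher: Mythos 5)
The paper does not actually prove this theorem: it records it as an output of the algorithm of \cite{HKP16} that converts a semi-transitive orientation into a word-representant---the same algorithm whose description is omitted from the proof of Theorem~\ref{main-charact} as ``rather complicated.'' Your outline (fix a semi-transitive orientation, observe that a maximum clique is oriented as a transitive chain, then insert the $n-\kappa(G)$ remaining vertices one at a time, charging each insertion at most $2$ to every letter's multiplicity) points in the same direction as that algorithm, but everything that makes it a proof is delegated to your two unproven lemmas: the ``two-layer'' placement of each new vertex, and the non-creation of spurious alternations under later insertions. Those two lemmas \emph{are} the theorem. They encapsulate the entire construction of \cite{HKP16}: where the copies of the new vertex go relative to its in- and out-neighbours, why shortcut-freeness (rather than mere acyclicity) guarantees the placement realizes exactly the right adjacencies, and why two new copies of each old letter always suffice. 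Saying that these ``reduce to applying shortcut-freeness to the directed paths passing through the vertices involved'' is a hope, not an argument; nothing in the proposal exhibits the shortcut that would arise if an insertion failed.

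Moreover, the accounting you call ``essentially immediate'' does not close as stated. Starting from a $1$-uniform permutation of the clique and letting each of the $n-\kappa$ insertions add up to two copies of every existing letter ends with multiplicity up to $1+2(n-\kappa)$, overshooting the claimed bound; and you never bound the number of copies of the newly inserted vertex $v$ itself at the moment it enters (it too must finish with at most $2(n-\kappa)$ copies, even though all later insertions add to it). Repairing this requires the precise structure of the insertion step---for instance, the fact (which does follow from shortcut-freeness, but must be proved) that the neighbourhood of a vertex inside the clique chain $c_1\rightarrow\cdots\rightarrow c_\kappa$ is an interval of the chain or the complement of one, which is what lets the first non-clique vertex be absorbed into a genuinely $2$-uniform word. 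As submitted, the proposal is a plausible plan whose technical heart is assumed rather than established, so it does not constitute a proof.
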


As an immediate corollary of Theorem~\ref{main-length}, we have that the {\em recognition problem} of word-representability is in {\em NP}. Indeed, any word-representant is of length at most $O(n^2)$, and we need $O(n^2)$ passes through such a word to check alternation properties of all pairs of letters. There is an alternative proof of this complexity observation by Magn\'us M. Halld\'orsson in terms of {\em semi-transitive orientations}.  In presenting his proof, we follow \cite[Remark 4.2.3]{KL15}.

 Checking that a given directed graph $G$ is acyclic is a polynomially solvable problem. Indeed, it is well known that the entry $(i,j)$ of the $k$th power of the adjacency matrix of $G$ records the number of walks of length $k$ in $G$ from the vertex $i$ to the vertex $j$. Thus, if $G$ has $n$ vertices, then we need to  make sure that the diagonal entries are all $0$ in all powers, up to the $n$th power, of the adjacency matrix of $G$.  Therefore, it remains to show that it is polynomially\index{polynomially solvable problem} solvable to check that $G$ is shortcut-free.
Let $u\rightarrow v$ be an edge in $G$. Consider the induced subgraph $H_{u\rightarrow v}$ consisting of vertices ``in between'' $u$ and $v$, that is, the vertex set of $H_{u\rightarrow v}$ is 
$$\{x\ |\ \mbox{there exist directed paths from }u \mbox{ to }x\mbox{ and from } x\mbox{ to }v\}.$$ 
It is  not so difficult to prove that $u\rightarrow v$ is not a shortcut\index{shortcut} (that is, is not a shortcutting edge) if and only if $H_{u\rightarrow v}$ is transitive. Now, we can use the well known fact that finding out whether there exists a directed path from one vertex to another in a directed graph is polynomially solvable, and thus it is polynomially\index{polynomially solvable problem} solvable to determine $H_{u\rightarrow v}$ (one needs to go through $n$ vertices and check the existence of two paths for each vertex). Finally, checking transitivity is also polynomially solvable, which is not difficult to see.

The following theorem shows that word-representable graphs generalize the class of 3-colorable graphs.

\begin{theorem}[\cite{HKP16}]\label{3-col-thm} Any $3$-colorable graph is word-representable. \end{theorem}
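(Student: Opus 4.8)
The plan is to invoke the characterization of Theorem~\ref{main-charact} and produce an explicit semi-transitive orientation of an arbitrary 3-colorable graph $G$. Fix a proper 3-coloring of $G$ with colors drawn from $\{1,2,3\}$, and orient every edge $xy$ from its endpoint of smaller color to its endpoint of larger color. This orientation is well-defined precisely because a proper coloring assigns distinct colors to adjacent vertices, so the two endpoints of each edge never share a color.

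First I would verify acyclicity. Along any directed path, the colors of consecutive vertices strictly increase by construction. Since only three colors are available, every directed path has at most two edges (three vertices), and in particular there can be no directed cycle. Thus the orientation is acyclic, which is the first of the two conditions defining a semi-transitive orientation.

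The key observation is that the shortcut-freeness condition is then satisfied \emph{vacuously}. By definition, a shortcut is an acyclic graph on at least four vertices possessing a directed path from its unique source to its unique sink that passes through every vertex; equivalently, it contains a directed path with at least three edges. The semi-transitivity check can be phrased as requiring, for each edge $x\rightarrow y$, that every directed path of at least three edges from $x$ to $y$ induce a transitive subgraph. But we have already seen that the longest directed path in our orientation has only two edges, so no directed path long enough to form a shortcut exists at all. Hence the orientation is automatically shortcut-free, and therefore semi-transitive.

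The main point—indeed essentially the only nontrivial step—is recognizing that bounding the number of colors bounds the length of all directed paths, which in turn rules out shortcuts outright rather than requiring a case analysis. Once the color-monotone orientation is chosen, there is no real obstacle to overcome. Applying Theorem~\ref{main-charact} to this semi-transitive orientation then yields that $G$ is word-representable, completing the argument.
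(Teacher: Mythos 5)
Your proposal is correct and is essentially identical to the paper's own proof: both orient edges monotonically along the colors $1\rightarrow 2\rightarrow 3$, observe that acyclicity is immediate, note that every directed path has at most two edges so shortcuts (which require a directed path of at least three edges) cannot occur, and then invoke Theorem~\ref{main-charact}. No gap to report.
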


\begin{proof}
Coloring a $3$-colorable graph in three colors, say, colors 1, 2 and 3, and orienting the edges based on the colors of their endpoints as $1 \rightarrow 2 \rightarrow 3$, we obtain a semi-transitive orientation. Indeed, obviously there are no cycles, and because the longest directed path involves only three vertices, there are no shortcuts. Theorem~\ref{main-charact} can now be applied to complete the proof.
\end{proof}

Theorem~\ref{3-col-thm} can be applied to see, for example, that the Petersen graph is word-representable, which we already know from Section~\ref{Petersen-sec}. More corollaries to Theorem~\ref{3-col-thm} can be found below.

\section{Non-word-representable Graphs}\label{non-w-r-sec}

From the discussion in Section~\ref{perm-rep-graphs-sec} we already know that the wheel graphs $W_{2n+1}$, for $n\geq 2$, are not word-representable, and that $W_5$ is the minimum (by the number of vertices) non-word-representable graph. But then, taking into account the hereditary nature of word-representability (see Remark~\ref{hereditary}), we have a family $\mathcal{W}$ of non-word-representable graphs characterised by containment of  $W_{2n+1}$ ($n\geq 2$) as an induced subgraph.

Note that {\em each} graph in $\mathcal{W}$ necessarily contains a vertex of degree 5 or more, and also a triangle as an induced subgraph. Natural questions are if there are non-word-representable graphs of {\em maximum degree} 4, and also if there are {\em triangle-free} non-word-representable graphs. Both questions were answered in affirmative. The graph to the right in Figure~\ref{2-counterex}, which was found in \cite{CKL17}, addresses the first question, while the second question is addressed by the following construction presented in \cite{HKP11}. 

Let $M$ be a 4-chromatic graph with girth at least 10 (such graphs exist by a result of Paul Erd\H{o}s; see \cite[Section 4.4]{KL15} for details). The {\em girth} of a graph is the length of a shortest cycle contained in the graph. If the graph does not contain any cycles (that is, it is an acyclic graph), its girth is defined to be infinity. Now, for every path of length 3 in $M$ add to $M$ an edge connecting path's end vertices. Then the obtained graph is triangle-free and non-word-representable \cite{HKP11}.

\subsection{Enumeration of Non-word-representable Graphs}

According to experiments run by Herman Z.Q. Chen, there are 1, 25 and 929 non-isomorphic non-word-representable connected graphs on six, seven and eight vertices, respectively.  These numbers were confirmed and extended to 68,545 for nine vertices, and 4,880,093 for 10 vertices, using a constraint programming (CP)-based method
 by  {\"{O}}zg{\"{u}}r Akg{\"{u}}n, Ian Gent and Christopher Jefferson. 
 
Figure~\ref{25-non-isom-non-word-repr} created by Chen presents the 25 non-isomorphic non-word-representable graphs on seven vertices. Note that the only non-word-representable graph on six vertices is the wheel $W_5$. Further note that the case of seven vertices gives just 10 minimal non-isomorphic non-word-representable graphs, since 15 of the graphs in Figure~\ref{25-non-isom-non-word-repr} contain $W_5$ as an induced subgraphs (these graphs are the first 11 graphs, plus the $15$th, $16$th, $18$th and $19$th graphs).

\begin{figure}[ht]
\begin{center}
\includegraphics[scale=0.8]{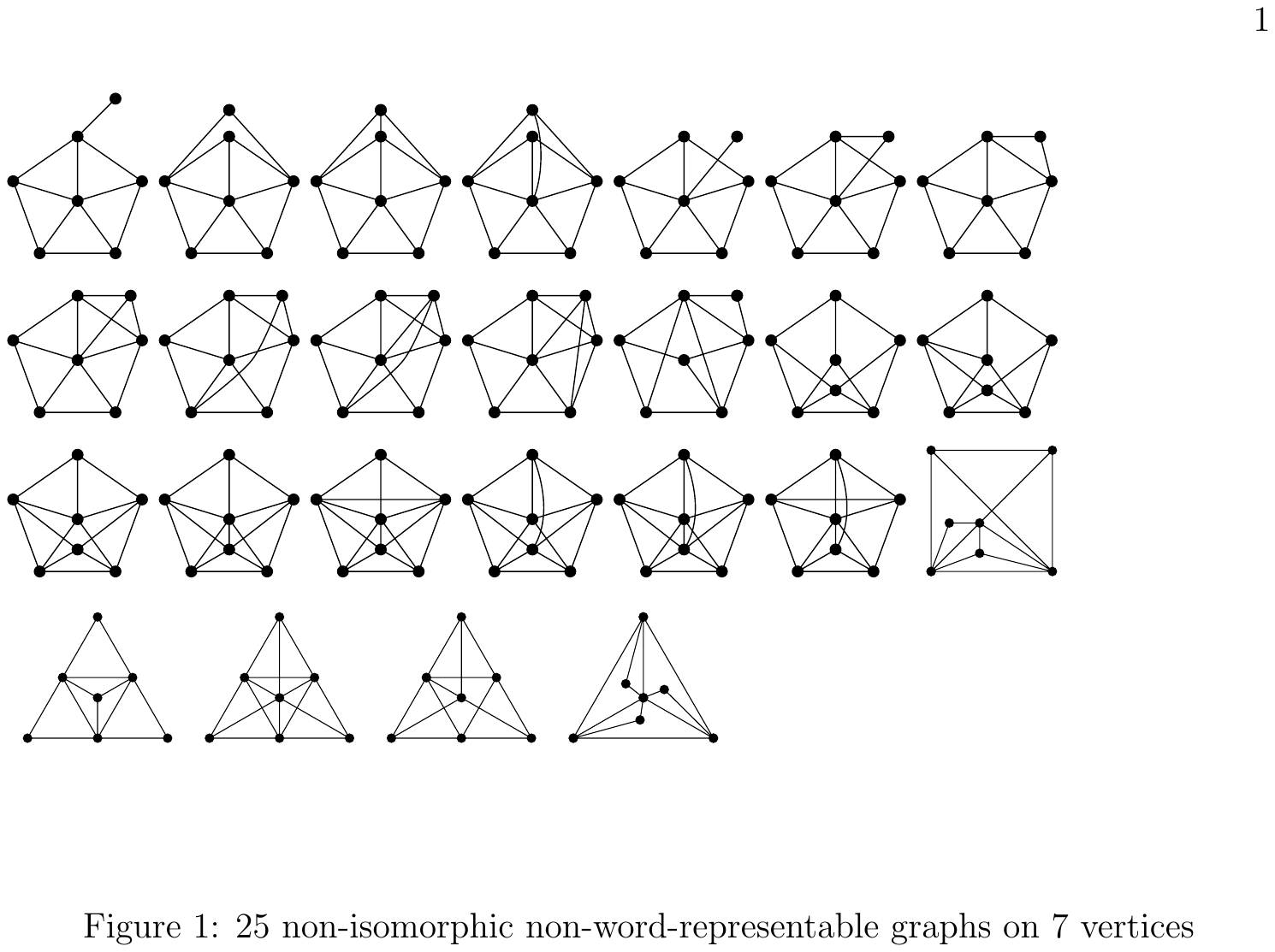}
\end{center}
\vspace{-20pt}
\caption{25  non-isomorphic non-word-representable graphs on seven vertices}\label{25-non-isom-non-word-repr}
\end{figure}

\subsection{Non-word-representable Line Graphs}\label{taking-line-sec}

The {\em line graph} of a graph $G=(V,E)$ is the graph with vertex set $E$ in which two vertices are 
adjacent if and only if the corresponding edges of $G$ share a vertex. The line graph of $G$ is 
denoted $L(G)$. Line graphs give a tool to construct non-word-representable graphs as follows from the theorems below. 

\begin{theorem}[\cite{KSSU11}] Let $n\geq 4$. For {\em any} wheel graph $W_n$, the line graph $L(W_n)$ is non-word-representable. \end{theorem}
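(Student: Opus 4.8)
The plan is to invoke the semi-transitive orientation characterization (Theorem~\ref{main-charact}): it suffices to prove that $L(W_n)$ admits no semi-transitive orientation. Label $W_n$ so that $h$ is the hub, $v_1,\dots,v_n$ the rim cycle, the spokes are $s_i=hv_i$ and the rim edges are $c_i=v_iv_{i+1}$ (indices modulo $n$). In $L(W_n)$ the spokes $s_1,\dots,s_n$ then form a clique $K_n$, the rim edges $c_1,\dots,c_n$ form an induced $C_n$, and $c_i$ is adjacent precisely to $s_i,s_{i+1}$ (besides $c_{i-1},c_{i+1}$). Since every acyclic orientation of a clique is a transitive tournament, any acyclic orientation of $L(W_n)$ restricts to a linear order on the spokes; I write $a_i$ for the rank of $s_i$ and $b_i$ for that of $c_i$ in a fixed topological order (a height function), so that every edge points from the lower to the higher endpoint.

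First I would isolate the behaviour of a single rim edge. For each $i$ and each $j\notin\{i,i+1\}$ the set $\{c_i,s_i,s_{i+1},s_j\}$ induces a \emph{diamond} ($K_4$ with the edge $c_is_j$ deleted), whose degree-two tips are $c_i,s_j$ and whose degree-three centres are $s_i,s_{i+1}$. A short direct check establishes the following lemma: an acyclic orientation of a diamond is shortcut-free if and only if its two centres lie both inside, or both outside, the height-interval spanned by its two tips. Applying this to all $j$ simultaneously forces a dichotomy for each rim edge $c_i$: either $b_i$ lies outside $[\min(a_i,a_{i+1}),\max(a_i,a_{i+1})]$, in which case $s_i$ and $s_{i+1}$ must be \emph{consecutive} in the spoke order; or $b_i$ lies between $a_i$ and $a_{i+1}$, in which case $s_i$ and $s_{i+1}$ must be the \emph{minimum and maximum} spokes.

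The combinatorial heart of the argument is a counting step around the rim. The second alternative can hold for at most one rim edge, since it pins that edge's two spokes to the unique extreme pair, and distinct rim edges are distinct vertex pairs. Yet it cannot be avoided by all edges: the first alternative for every $i$ would make all $n$ cyclically consecutive pairs $\{s_i,s_{i+1}\}$ adjacent in the linear order, but a linear order on $n$ elements has only $n-1$ adjacent pairs, so the $n$ distinct rim pairs cannot all be accommodated. Hence exactly one rim edge realizes the second alternative and the other $n-1$ realize the first; their $n-1$ consecutive pairs must then be \emph{exactly} the $n-1$ adjacent pairs of the order. This forces the spoke order to run monotonically around the rim cycle, so after relabelling $a_1<a_2<\cdots<a_n$, with $c_n$ the exceptional ``between'' edge.

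Finally I would derive a contradiction from the induced rim cycle $c_1\cdots c_n$. With the spoke order pinned, the diamond conditions together with the shortcut-freeness of the induced four-cycles $\{s_i,c_i,c_{i+1},s_{i+2}\}$ propagate an ordering constraint along $b_1,\dots,b_{n-1}$, and the ``between'' edge $c_n$ closes the loop; the upshot is that the induced cycle $c_1\cdots c_n$ is forced to acquire a single source and a single sink that are adjacent on the cycle, i.e.\ a directed Hamiltonian path closed by its edge. Such an orientation of an induced cycle of length $\geq 4$ is itself a shortcut (the closing edge is the shortcutting edge and non-transitivity is immediate), contradicting semi-transitivity; by Theorem~\ref{main-charact}, $L(W_n)$ is not word-representable. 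I expect the genuine obstacle to be exactly this last step, since a small fixed piece of $L(W_n)$ (for instance a $K_4$ with three consecutive rim edges attached) is already word-representable, so the contradiction must exploit the full cyclic wrap-around: one must track the several ``high/low'' patterns of the rim edges and check in each that the rim cycle (or the long induced cycle through two extreme spokes) is driven into a shortcut. By contrast the diamond lemma and the pigeonhole step are clean and carry most of the structural weight.
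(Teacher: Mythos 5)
The paper does not actually prove this theorem; it is stated with a citation to \cite{KSSU11} and no argument is given, so there is no in-paper proof to compare against. Judged on its own terms, your proposal is sound, and it fits the paper's general methodology (reduce word-representability to the existence of a semi-transitive orientation via Theorem~\ref{main-charact}). Your two fully worked stages are correct: the diamond lemma holds exactly as stated, and it is well-posed because each centre is adjacent to both tips, so ``lies between the tips or not'' depends only on the orientation and not on the chosen topological order; the dichotomy for each rim edge (consecutive spokes versus extreme spokes) and the pigeonhole count ($n$ distinct cyclic pairs cannot all be among the $n-1$ adjacent pairs of a linear order) are also correct and do pin the spoke order to $a_1<\cdots<a_n$ with $c_n$ the unique ``between'' edge.

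The step you flag as the genuine obstacle does go through, and in fact needs nothing beyond your own diamond lemma; the four-cycles $\{s_i,c_i,c_{i+1},s_{i+2}\}$ you invoke contribute nothing. Apply the lemma to the diamonds $\{c_i,c_{i+1},s_i,s_{i+1}\}$ (tips $c_{i+1},s_i$) and $\{c_i,c_{i+1},s_{i+1},s_{i+2}\}$ (tips $c_i,s_{i+2}$) for $1\le i\le n-2$, writing ``low'' for $b_i<a_i$ and ``high'' for $b_i>a_{i+1}$. These force: (i) the pattern $c_i$ low, $c_{i+1}$ high is impossible, since then $s_{i+1}$ lies between the tips $s_i,c_{i+1}$ while $c_i$ lies below both; and (ii) $b_i<b_{i+1}$ whenever $c_i,c_{i+1}$ are both low or both high. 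Hence $c_1,\dots,c_{n-1}$ consist of a (possibly empty) high block followed by a low block, with heights increasing within each block. The diamonds containing $c_n$ (tips $c_{n-1},s_1$ with centres $c_n,s_n$, and tips $c_1,s_n$ with centres $c_n,s_1$) similarly force $b_{n-1}<b_n$ if $c_{n-1}$ is low, $b_n<b_{n-1}$ if $c_{n-1}$ is high, $b_n<b_1$ if $c_1$ is high, and $b_1<b_n$ if $c_1$ is low. In each of the three resulting cases (all low, all high, high block then low block) the induced rim cycle gets exactly one source and one sink and they are adjacent on the cycle, i.e.\ a directed Hamiltonian path closed by its shortcutting edge --- precisely the shortcut you predicted, with $n\ge 4$ being exactly what makes this cycle a legitimate shortcut. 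So the plan is complete-able as written; the only flaw is the cosmetic misattribution of the propagation step to the four-cycles rather than to the diamonds spanning two consecutive rim edges.
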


\begin{theorem}[\cite{KSSU11}] Let $n\geq 5$. For {\em any} complete graph $K_n$, the line graph $L(K_n)$ is non-word-representable. \end{theorem}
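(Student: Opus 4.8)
The plan is to show that $L(K_n)$ for $n\geq 5$ contains the line graph $L(K_5)$ as an induced subgraph, and then to establish the base case that $L(K_5)$ itself is non-word-representable; the hereditary nature of word-representability (Remark~\ref{hereditary}) then finishes the argument for all $n\geq 5$. The inductive reduction is clean: since $K_5$ is an induced subgraph of $K_n$ whenever $n\geq 5$ (just restrict to five of the vertices), and since the line-graph construction is monotone under taking induced subgraphs on the edge set, $L(K_5)$ sits inside $L(K_n)$ as an induced subgraph. Thus it suffices to handle $n=5$.

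For the base case I would exploit Theorem~\ref{main-charact}, which reduces non-word-representability to the nonexistence of a semi-transitive orientation. The graph $L(K_5)$ is the well-known \emph{triangular graph} $T(5)$, also known as the Petersen graph's complement; it is a strongly regular graph on $10$ vertices that is $6$-regular, and its structure is highly symmetric. The key local feature to exploit is that the edges of $K_5$ incident to a fixed vertex $v$ form a clique of size $4$ in $L(K_5)$ (one $K_4$ for each of the five vertices of $K_5$), and the edges forming a triangle in $K_5$ also give a clique (a $K_3$) in $L(K_5)$. These overlapping cliques force rigid constraints on any acyclic orientation. The idea would be to argue that in any acyclic orientation, following the linear orders induced on these cliques, one is forced either to create a directed cycle or to produce a shortcut on one of the $4$-cliques: a directed path of length at least three whose induced subgraph fails transitivity.

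The main obstacle will be making the forcing argument precise, since $L(K_5)$ has enough symmetry but also enough edges that a purely case-based analysis is delicate. Concretely, the hard part is to show that the transitive orders that an acyclic orientation must induce on each of the five $K_4$'s cannot be glued together consistently: each edge $\{i,j\}$ of $K_5$ (a vertex of $L(K_5)$) lies in exactly two of these $K_4$'s (one for $i$ and one for $j$), so its relative position in the two linear orders is constrained simultaneously. I expect the cleanest route to be a counting or parity argument on the ranks of vertices within these cliques, deriving a contradiction of the type that appears in the proof that the Petersen graph is not $2$-representable, but now phrased in terms of shortcuts rather than alternation. Alternatively, one could locate a specific small induced subgraph of $L(K_5)$ that is already known to admit no semi-transitive orientation and invoke heredity again; identifying such a forbidden configuration (for instance, a suitable wheel-like or shortcut-forcing gadget living inside $T(5)$) would sidestep the global gluing analysis and is likely the pragmatic path to a short proof.
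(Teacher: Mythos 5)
Your reduction step is fine: since any subgraph $H$ of $G$ yields $L(H)$ as an \emph{induced} subgraph of $L(G)$ (adjacency of two edges in a line graph depends only on whether they share an endpoint), $L(K_5)$ sits inside $L(K_n)$ for all $n\geq 5$, and the hereditary property (Remark~\ref{hereditary}) reduces everything to the case $n=5$. Note that the paper itself gives no proof of this theorem (it only cites \cite{KSSU11}), but this reduction is not where the content of the theorem lies.

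The genuine gap is the base case: you never actually prove that $L(K_5)$ admits no semi-transitive orientation. What you offer is a list of candidate strategies --- a forcing argument on the five $K_4$'s, a rank or parity count, or locating a forbidden induced subgraph --- together with an explicit admission that making the forcing argument precise is the hard part. As it stands, the proposal establishes nothing beyond the trivial reduction. Moreover, the ``pragmatic path'' you hope for is blocked in its most natural form: the complement of $L(K_5)$ is the Petersen graph, and since the Petersen graph is $3$-regular and every vertex off an induced $5$-cycle has exactly one neighbour on that cycle, neither $W_5$ nor $W_7$ occurs as an induced subgraph of $L(K_5)$ (an induced $C_7$ would even force a $4$-regular induced subgraph of a $3$-regular graph). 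So the known small non-word-representable wheels cannot be invoked directly. The configuration that does work --- and is how \cite{KSSU11} argues --- comes from the companion theorem stated immediately before this one in the paper: $L(W_n)$ is non-word-representable for $n\geq 4$. Since the wheel $W_{n-1}$ is a (spanning) subgraph of $K_n$, the same observation you used for your reduction shows that $L(W_{n-1})$ is an induced subgraph of $L(K_n)$, and heredity finishes the proof for all $n\geq 5$. Your outline can therefore be repaired, but only by importing the wheel result as the forbidden configuration (or by genuinely carrying out the orientation analysis on $L(K_5)$, which your sketch does not do).
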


\begin{figure}

\begin{center}
\begin{tabular}{ccccc}

\begin{tikzpicture}[node distance=0.8cm,auto,main node/.style={fill,circle,draw,inner sep=0pt,minimum size=5pt}]

\node[main node] (1) {};
\node[main node] (2) [above right of=1] {};
\node[main node] (3) [right of=1] {};
\node[main node] (4) [below right of=1] {};
\node (5) [left of=1] {$K_{1,3}=$};

\path
(1) edge node  {} (2)
     edge node  {} (3)
     edge node  {} (4);

\end{tikzpicture}

&

&

\begin{tikzpicture}[node distance=0.8cm,auto,main node/.style={fill,circle,draw,inner sep=0pt,minimum size=5pt}]

\node[main node] (1) {};
\node[main node] (2) [right of=1] {};
\node[main node] (3) [below of=2] {};
\node[main node] (4) [left of=3] {};
\node (5) [below left of=1] {$C_{4}=$};

\path
(1) edge node  {} (2)
     edge node  {} (4);

\path
(3) edge node  {} (2)
     edge node  {} (4);

\end{tikzpicture}

&

&

\begin{tikzpicture}[node distance=0.8cm,auto,main node/.style={fill,circle,draw,inner sep=0pt,minimum size=5pt}]

\node[main node] (1) {};
\node[main node] (2) [right of=1] {};
\node[main node] (3) [right of=2] {};
\node[main node] (4) [right of=3] {};
\node (5) [left of=1] {$P_{4}=$};

\path
(2) edge node  {} (1)
     edge node  {} (3);

\path
(3) edge node  {} (4);

\end{tikzpicture}

\end{tabular}

\end{center}
\vspace{-5mm}
\caption{The claw graph $K_{1,3}$, the cycle graph $C_4$, and the path graph $P_4$}\label{just-three-graphs}
\end{figure}
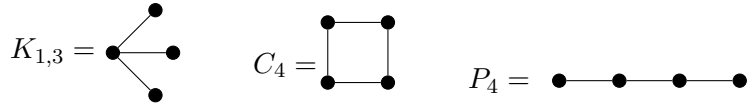

The following theorem is especially interesting as it shows how to turn essentially any graph into non-word-representable graph.

\begin{theorem}[\cite{KSSU11}] If a connected graph $G$ is {\em not} a path graph, a cycle graph, or the claw graph $K_{1,3}$, then the line graph $L^n(G)$ obtained by application of the procedure of taking the line graph to $G$ $n$ times, is non-word-representable for $n\geq 4$. \end{theorem}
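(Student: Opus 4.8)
The plan is to combine the classical structure theory of iterated line graphs with the two preceding theorems of this section, namely that $L(W_n)$ is non-word-representable for $n\geq 4$ and that $L(K_n)$ is non-word-representable for $n\geq 5$. The starting point is the van Rooij--Wilf dichotomy for the sequence $G, L(G), L^2(G), \ldots$ when $G$ is connected: a cycle graph reproduces itself under $L$, a path graph $P_m$ maps to $P_{m-1}$ and eventually collapses to a point, and the claw $K_{1,3}$ maps to the triangle $C_3$ and then stabilises, whereas for \emph{every other} connected $G$ the number of vertices, and in fact the maximum degree $\Delta(L^n(G))$, grows without bound. Thus the three excluded classes are exactly the connected graphs whose iterated line graphs fail to grow, which is precisely what the hypothesis of the theorem rules out.

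First I would record the degree bookkeeping for line graphs: a vertex of $L(H)$ is an edge $uv$ of $H$, its degree equals $\deg_H(u)+\deg_H(v)-2$, and the edges incident to a common vertex of $H$ form a clique in $L(H)$, so a vertex of degree $d$ in $H$ yields an induced $K_d$ in $L(H)$. Iterating, a high-degree vertex propagates into progressively larger cliques and, more importantly, into progressively richer vertex neighbourhoods. The key structural lemma I would prove is that once $L^{n-1}(G)$ contains a vertex whose local structure is rich enough, the next line graph $L^{n}(G)$ contains an induced copy of $L(W_4)$ or of $L(K_5)$; since word-representability is hereditary (Remark~\ref{hereditary}), the presence of either forces $L^{n}(G)$ to be non-word-representable. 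An alternative route to the same conclusion, which I would keep in reserve, is to exhibit a vertex of $L^{n}(G)$ whose neighbourhood is not a comparability graph, contradicting Theorem~\ref{neighbourhood}: the neighbourhood of an edge $uv$ in a line graph is a union of two cliques, and I would show that the gluing pattern produced after a few iterations necessarily carries an obstruction to transitive orientation.

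The substance of the argument, and the main obstacle, is the \emph{uniform} constant $4$. The van Rooij--Wilf growth statement is asymptotic, whereas the theorem asserts non-word-representability for \emph{every} $n\geq 4$, including the early iterations where the graph is still small; indeed, for $G=K_{1,4}$ the second iterate $L^2(G)$ is the octahedron $K_{2,2,2}$, which is $3$-colorable and hence word-representable, so the bad structure genuinely does not appear before the fourth step. I would therefore organise the heart of the proof as a finite case analysis on the structure of $G$ (whether it has a vertex of degree at least $4$, a degree-$3$ vertex lying on a triangle, a degree-$3$ vertex carrying a pendant path, and so on), in each case tracking a single witnessing substructure through four applications of $L$ and checking that at step $4$ it has matured into one of the non-word-representable seeds above.

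Finally I would establish \emph{persistence}: if the witnessing induced subgraph is present in $L^4(G)$, then an analogous (possibly larger) one survives in $L^{n}(G)$ for all $n\geq 4$. This follows because, beyond the growth threshold, $\Delta(L^{n}(G))$ is non-decreasing, so the degree and clique conditions feeding the structural lemma are never lost and the lemma can be re-applied at each subsequent step. Assembling the base case from the case analysis with this persistence argument yields non-word-representability of $L^{n}(G)$ for all $n\geq 4$.
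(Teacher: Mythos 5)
The survey you are working from does not actually contain a proof of this theorem; it is stated with a citation to \cite{KSSU11}, so your proposal has to be judged against that argument and on its own merits. Your skeleton is essentially the right one and matches the cited proof's strategy: every connected graph other than paths, cycles and $K_{1,3}$ contains one of three minimal configurations as a subgraph (a vertex of degree $4$, i.e.\ $K_{1,4}$; a triangle with a pendant edge; a claw with a pendant edge), and one tracks each of these through four applications of $L$ until an induced copy of one of the two seeds $L(W_4)$ or $L(K_5)$ from the preceding theorems appears, at which point Remark~\ref{hereditary} finishes the job. Your tightness example $L^2(K_{1,4})=K_{2,2,2}$ is also correct. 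However, you never isolate the elementary fact that makes this whole scheme run: if $H'$ is a subgraph (\emph{not} necessarily induced) of $H$, then $L(H')$ is an \emph{induced} subgraph of $L(H)$. This is what converts a mere subgraph copy of $W_4$ or $K_5$ inside $L^{n-1}(G)$ into an induced copy of the non-word-representable graphs $L(W_4)$ or $L(K_5)$ inside $L^n(G)$; for instance, the chain (claw with pendant edge) $\rightarrow$ (triangle with pendant edge) $\rightarrow$ ($K_4$ minus an edge) $\rightarrow W_4$ then yields an induced $L(W_4)$ after the fourth iteration.

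The genuine gap is your persistence step. The claim that $\Delta(L^n(G))$ is non-decreasing is false as stated: $\Delta(K_{1,5})=5$ but $\Delta(L(K_{1,5}))=\Delta(K_5)=4$, and $\Delta(L(K_{1,4}))=3<4$; the hedge ``beyond the growth threshold'' is neither defined nor proved. More importantly, even where degrees do grow, degree and clique growth cannot by themselves regenerate the obstruction, because cliques of any size are word-representable (indeed $1$-representable), so ``the degree and clique conditions are never lost'' does not re-trigger your structural lemma. The correct persistence argument is structural rather than metric: one checks, once and for all, that $W_4$ occurs as a subgraph of $L(W_4)$ and also as a subgraph of $L(K_5)$. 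Combined with the subgraph-to-induced-subgraph fact above, this shows that if $W_4$ or $K_5$ is a subgraph of $L^{n-1}(G)$, then $W_4$ is again a subgraph of $L^{n}(G)$; so once your case analysis supplies the base case that $W_4$ or $K_5$ is a subgraph of $L^3(G)$ (which it does: the three chains end in $W_4$, $L(W_4)\supseteq W_4$, and $L(K_{2,2,2})\supseteq W_4$ respectively), induction keeps the seed alive forever, and every $L^n(G)$ with $n\geq 4$ contains an induced $L(W_4)$ or $L(K_5)$ and is therefore non-word-representable. With the persistence step repaired in this way, your outline becomes a correct proof along exactly the intended lines.
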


\section{Word-Representability and Operations on Graphs}

In this section we consider some of the most basic operations on graphs, namely,  taking the complement, edge subdivision, edge contraction, connecting two graphs by an edge and gluing two graphs in a clique, replacing a vertex with a module, Cartesian product, rooted product and taking line graph.

We do not consider edge-addition/deletion trivially not preserving (non)-word-representability, although there are situations when these operations may preserve word-representability. For example, it is shown in \cite{KK17} that edge-deletion preserves word-representability on $K_4$-free word-representable graphs.

Finally, we do not discuss the operations $Y\rightarrow\Delta$ (replacing an induced subgraph $K_{1,3}$, the claw, on vertices $v_0,v_1,v_2,v_3$, where $v_0$ is the apex, by the triangle on vertices $v_1,v_2,v_3$ and removing $v_0$) and $\Delta\rightarrow Y$ (removing the edges of a triangle on vertices $v_1,v_2,v_3$ and adding a vertex $v_0$ connected to $v_1,v_2,v_3$) recently studied in \cite{KK17} in the context of word-representability of graphs.

\subsection{Taking the Complement}\label{taking-complem-subseq}

Starting with a word-representable graph and taking its complement, we may either obtain a word-representable graph or not. Indeed, for example, both any graph on at most five vertices and its complement are word-representable. On the other hand, let $G$ be the graph formed by the $5$-cycle (2,4,6,3,5) and an isolated vertex~1. The $5$-cycle can be represented by the word $2542643653$ (see Section~\ref{cycle-sec} for a technique to represent cycle graphs) and thus the graph $G$ can be represented by the word $112542643653$. However, taking the complement of $G$, we obtain the wheel graph $W_5$, which is not word-representable.  

Similarly, starting with a non-word-representable graph and taking its complement, we can either obtain a word-representable graph or not. Indeed, the complement of the non-word-representable wheel $W_5$ is word-representable, as is discussed above. On the other hand, the graph $G$ having two connected components, one $W_5$ and the other one the $5$-cycle $C_5$, is non-word-representable because of the induced subgraph $W_5$, while the complement of $G$ also contains an induced subgraph $W_5$ (formed by the vertices of $C_5$ in $G$ and any of the remaining vertices) and thus is also non-word-representable. 

\subsection{Edge Subdivision and Edge Contraction}

Subdivision of graphs (see Definition~\ref{def-subdivision}) is based on subdivision of individual edges, and it is considered in Section~\ref{subdiv-graphs-3} from 3-representability point of view. 

If we change ``3-representable'' by ``word-representable'' in Theorem~\ref{minor-3-word-repr} we would obtain a weaker, but clearly still true statement, which is not hard to prove directly via  semi-transitive orientations. Indeed, each path of length at least 3 added instead of an edge $e$ can be oriented in a ``blocking'' way, so that there would be no directed path between $e$'s endpoints. Thus, edge subdivision does not preserve the property of being non-word-representable.  The following theorem shows that edge subdivision may be preserved on some subclasses of word-representable graphs, but not on the others.

\begin{theorem}[\cite{KK17}] Edge subdivision preserves word-representability on $K_4$-free word-representable graphs, and it does not necessarily preserve word-representability on $K_5$-free word-representable graphs.\end{theorem}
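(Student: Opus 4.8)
The plan is to work entirely with semi-transitive orientations via Theorem~\ref{main-charact}, which says a graph is word-representable if and only if it admits a semi-transitive orientation. Write $G'$ for the graph obtained from $G$ by subdividing the edge $xy$ with a new vertex $z$, so that $xy$ is replaced by the path $x$-$z$-$y$. The observation I would use repeatedly is that $z$ has degree $2$ in $G'$, so in any orientation $z$ is either an internal vertex of the $2$-path ($x\to z\to y$, the \emph{naive} orientation), a local sink ($x\to z\leftarrow y$, the \emph{valley}), or a local source ($x\leftarrow z\to y$, the \emph{peak}).

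The key lemma for the $K_4$-free direction is the following: if $H$ is $K_4$-free and semi-transitively oriented, then for every edge $u\to v$, every directed path from $u$ to $v$ has length at most $2$. Indeed, a directed path from $u$ to $v$ of length $\ell\ge 3$ together with the edge $u\to v$ is, by semi-transitivity, an induced transitive subgraph on $\ell+1\ge 4$ vertices; a transitive acyclic orientation possessing a directed Hamiltonian path makes every pair of its vertices comparable, so the underlying graph is $K_{\ell+1}\supseteq K_4$, contradicting $K_4$-freeness.

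With this in hand, I would fix a semi-transitive orientation $O$ of $G$ with $x\to y$ and delete the edge $xy$; this leaves a semi-transitive orientation of $G\setminus xy$, since any shortcut in $O\setminus xy$ would already be a shortcut in $O$. Extending to $G'$ and checking where $z$ can sit in a shortcut yields a clean dichotomy: the naive orientation creates a shortcut exactly when $x,y$ have a common neighbour $t$ with $t\to x$ (a \emph{before} neighbour) or $y\to t$ (an \emph{after} neighbour), while the valley and peak orientations create a shortcut exactly when there is a directed $x$-to-$y$ path of length at least $2$, which by the key lemma means a common neighbour $t$ with $x\to t\to y$ (a \emph{between} neighbour). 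Crucially, the key lemma rules out every longer spanning configuration, so these are the \emph{only} dangerous cases; and by $K_4$-freeness the common neighbours of $x$ and $y$ form an independent set. Hence, if some semi-transitive orientation of $G$ places all common neighbours of $\{x,y\}$ between, the naive orientation of $G'$ works, and if some orientation places none of them between, the valley orientation works. The main obstacle is therefore to produce a semi-transitive orientation of $G$ in which the common neighbours of the chosen edge are \emph{uniform} in this sense; I would obtain it by starting from an arbitrary semi-transitive orientation and performing a controlled re-orientation of the edges incident to the offending common neighbours, re-verifying acyclicity and shortcut-freeness using the independence of the common neighbourhood. This re-orientation is where the genuine technical work lies.

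For the second assertion I would exhibit an explicit $K_5$-free word-representable graph together with an edge whose subdivision destroys word-representability, verifying word-representability of $G$ by writing down a semi-transitive orientation (or a word) and non-word-representability of $G'$ by arguing that every acyclic orientation of $G'$ contains a shortcut. The contrast with the first part is exactly the key lemma: under mere $K_5$-freeness a directed path of length $3$ between the endpoints of an edge is permitted (it forces only a $K_4$, not a $K_5$), so after subdivision one obtains a spanning path of length $4$ carrying an unavoidable shortcut that cannot be removed by any of the three local orientations of $z$, nor by re-routing the common neighbours. I expect the hardest step to be certifying that $G'$ admits \emph{no} semi-transitive orientation, which I would do by a finite case analysis on the orientations forced by the degree-$2$ vertex $z$, showing that each completion produces either a directed cycle or a shortcut.
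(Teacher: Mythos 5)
The paper itself contains no proof of this theorem (it is quoted from \cite{KK17}, listed as ``in preparation''), so your attempt must stand on its own, and it has two gaps of very different severity. For the first half, your framework is sound: the key lemma is correct, the naive/valley/peak trichotomy for the degree-$2$ vertex $z$ is exhaustive, and your description of exactly which configurations create shortcuts in each case is right. (One slip: ``any shortcut in $O\setminus xy$ would already be a shortcut in $O$'' is false in general --- deleting one edge of a transitive tournament on four vertices produces a shortcut --- though in your setting it is rescued by $K_4$-freeness, since the offending configuration viewed inside $O$ would otherwise be a transitive tournament, i.e.\ a $K_4$.) The genuine gap is the step you explicitly defer: producing an orientation in which the common neighbours of $x,y$ are uniformly ``between'' or uniformly not. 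The ``controlled re-orientation'' you sketch is not a viable route as described: flipping $t\to x$ to $x\to t$ creates a directed cycle whenever $O$ contains another directed path from $t$ to $x$, and it can create shortcuts far from $x$ and $y$; independence of the common neighbourhood controls neither. What you missed is that no re-orientation is needed, because the uniformity is \emph{automatic} in every semi-transitive orientation of a $K_4$-free graph: if $t_1$ were a before-neighbour ($t_1\to x$, $t_1\to y$) and $t_0$ a between-neighbour ($x\to t_0\to y$), then $t_1\to x\to t_0\to y$ is a directed path with shortcutting edge $t_1\to y$, so semi-transitivity would force the edge $t_1t_0$, contradicting the fact (which you proved) that common neighbours form an independent set; symmetrically, an after-neighbour $t_2$ together with $t_0$ gives the path $x\to t_0\to y\to t_2$ with shortcutting edge $x\to t_2$, forcing $t_0t_2$. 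Thus every semi-transitive orientation already places all common neighbours between, or none of them --- the very same shortcut argument you use elsewhere closes your gap, and with it your proof of the first assertion becomes complete.

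The second half, however, is not a proof at all. The statement ``does not necessarily preserve word-representability on $K_5$-free word-representable graphs'' requires an explicit witness: a concrete $K_5$-free word-representable graph $G$, a concrete edge, and a verification (via Theorem~\ref{main-charact} or otherwise) that the subdivided graph admits no semi-transitive orientation. You exhibit neither the graph nor the case analysis; you only explain why your $K_4$-free argument breaks down under $K_5$-freeness (a length-$3$ directed path between the ends of an edge forces only a $K_4$). That observation shows your part-one proof cannot be extended, but non-extendability of a proof is not a counterexample. As written, this half of the theorem remains entirely unproven.
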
 

Recall the definition of edge contraction in Definition~\ref{edge-contraction-undirected}. By Theorem~\ref{minor-3-word-repr}, contracting an edge in a word-representable graph may result in a non-word-representable graph, while in many cases, e.g. in the case of path graphs, word-representability is preserved under this operation. 

On the other hand, when starting from a non-word-representable graph, a graph obtained from it by edge contraction can also be either word-representable or non-word-representable. For example, contracting any edge incident with the bottommost vertex in the non-word-representable  graph to the right in Figure~\ref{2-counterex}, we obtain a graph on six vertices that is different from $W_5$ and is thus word-representable. Finally, any non-word-representable graph can be glued in a vertex with a path graph $P$ (the resulting graph will be non-word-representable), so that contracting any edge in the subgraph formed by $P$ results in a non-word-representable graph.

\subsection{Connecting two Graphs by an Edge and Gluing two Graphs in a Clique}\label{glueing-connecting-two-graphs}

In what follows, by {\em gluing} two graphs in a clique we mean the following operation. Suppose $a_1,\ldots,a_k$ and $b_1,\ldots,b_k$ are cliques of size $k$ in graphs $G_1$ and $G_2$, respectively. Then gluing $G_1$ and $G_2$ in a clique of size $k$ means identifying each $a_i$ with one $b_j$, for $i,j\in\{1,\ldots,k\}$ so that the neighbourhood of the obtained vertex $c_{i,j}$ is the union of the neighbourhoods of $a_i$ and $b_j$. 

By the hereditary nature of word-representability (see Remark~\ref{hereditary}), if at least one of two graphs is non-word-representable, then gluing the graphs in a clique, or connecting two graphs by an edge (with the endpoints belonging to different graphs) will result in a non-word-representable graph. 

On the other hand, suppose that graphs $G_1$ and $G_2$ are word-representable. Then gluing the graphs in a vertex, or connecting the graphs by an edge will result in a word-representable graph. The latter statement is easy to see using the notion of semi-transitive orientation. Indeed, by Theorem~\ref{main-charact} both $G_1$ and $G_2$ can be oriented semi-transitively, and  gluing the oriented graphs in a vertex, or connecting the graphs by an edge oriented arbitrarily, will not result in any cycles or shortcuts created. In fact, it was shown in~\cite{K13} that if $G_1$ is $k_1$-representable (such a $k_1$ must exist by Theorem~\ref{equiv-thm}) and $G_2$ is $k_2$-representable, then essentially always the graph obtained either by gluing $G_1$ and $G_2$ in a vertex or by connecting the graphs by an edge is $\max(k_1,k_2)$-representable.  

Even though glueing two word-representable graphs in a vertex (clique of size 1) always results in a word-representable graph, this is not necessarily true for glueing graphs in an edge (clique of size 2) or in a triangle (clique of size 3). We refer to \cite[Section 5.4.3]{KL15} for the respective examples. Glueing two graphs in cliques of size 4 or more in the context of word-representability remains an unexplored direction.

\subsection{Replacing a Vertex with a Module}\label{modules-subsec}

A subset $X$ of the set of vertices $V$ of a graph $G$ is a {\em module} if all members of $X$ have the same set of neighbours among vertices not in $X$ (that is, among vertices in $V\setminus X$). For example, Figure~\ref{modules-business} shows replacement of the vertex $1$ in the triangular prism by the module $K_3$ formed by the vertices $a$, $b$ and $c$. Thus, $\{a,b,c\}$ is a module of the graph on the right in Figure~\ref{modules-business}.

\begin{figure}[ht]
\begin{center}
\includegraphics[scale=0.6]{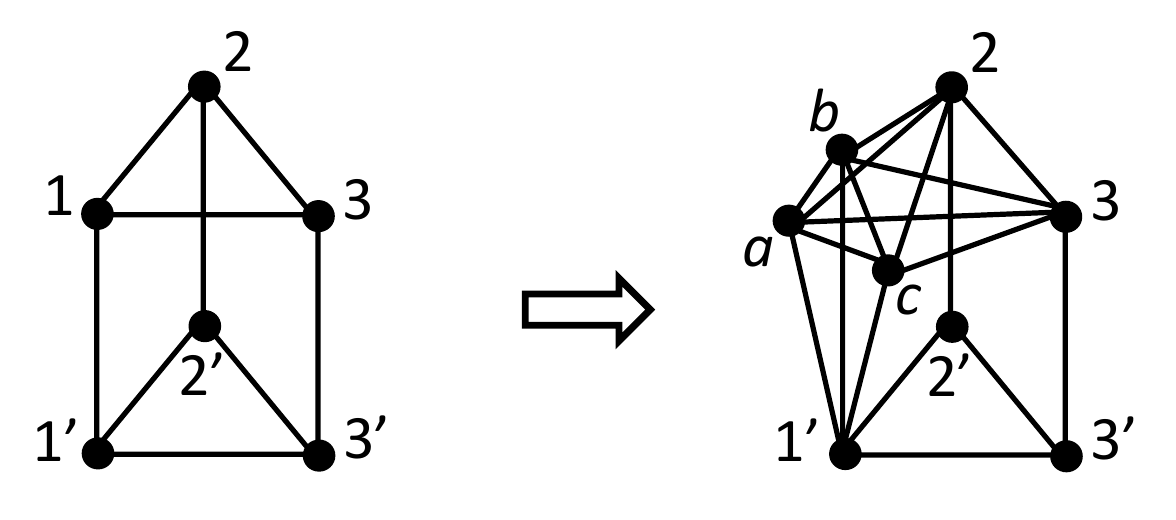}
\end{center}
\vspace{-20pt}
\caption{Replacing a vertex by a module}
\label{modules-business}
\end{figure}

\begin{theorem}[\cite{K13}]\label{module-thm} Suppose that $G=(V,E)$ is a word-representable graph and $x\in V$. Let $G'$ be obtained from $G$ by replacing $x$ with a module $M$, where $M$ is any comparability graph (in particular, any clique). Then $G'$ is also word-representable. Moreover, if $\mathcal{R}(G)=k_1$ and $\mathcal{R}(M)=k_2$ then $\mathcal{R}(G')=k$, where $k=\max\{k_1,k_2\}$. \end{theorem}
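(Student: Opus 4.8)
The plan is to establish the two inequalities $\mathcal{R}(G')\le k$ and $\mathcal{R}(G')\ge k$ separately, where $k=\max\{k_1,k_2\}$.

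The lower bound is the easy half and rests entirely on heredity (Remark~\ref{hereditary}). Both $G$ and $M$ occur in $G'$ as induced subgraphs: $M$ is the module itself, while a copy of $G$ is recovered by keeping a single module vertex $m_0$ together with all of $V\setminus\{x\}$ (since $m_0$ inherits exactly the neighbours that $x$ had, the module condition makes this induced subgraph isomorphic to $G$). Hence, deleting the appropriate letters from any $j$-uniform word representing $G'$ produces $j$-uniform words representing $G$ and $M$, so $j\ge\mathcal{R}(G)=k_1$ and $j\ge\mathcal{R}(M)=k_2$; therefore $\mathcal{R}(G')\ge k$.

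For the upper bound I would first record the qualitative fact that $G'$ is word-representable, by substitution. Since $M$ is a comparability graph it is permutationally representable (Theorem~\ref{Kit-Sei-thm}), say by $p_1\cdots p_d$ with each $p_i$ a permutation of $V(M)$. Choosing $N\ge\max\{k_1,d\}$, take an $N$-uniform word for $G$ (Theorem~\ref{k-implies-k-plus-1}) and a representation of $M$ by $N$ permutations (repeat the first permutation $N-d$ times), then replace the $i$-th occurrence of $x$ by the $i$-th permutation. Because each substituted block is a \emph{full} permutation, every module letter occurs exactly once per block and so has the same alternation pattern with any outside letter as $x$ did (correct edges between $M$ and the neighbours of $x$); the concatenation of the blocks reproduces the word for $M$ (correct edges inside $M$); and outside letters retain their mutual alternations. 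This proves $G'$ is word-representable and gives $\mathcal{R}(G')\le\max\{k_1,d\}$.

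The main obstacle is sharpening this to the claimed value $k=\max\{k_1,k_2\}$, because a comparability graph can require strictly more permutations than its representation number — for instance $\mathcal{R}(H_{n,n})=\lceil n/2\rceil$ by Theorem~\ref{crown-thm-main}, whereas $H_{n,n}$ needs $n$ permutations — so the naive block construction overshoots. The fix is to exploit that $x$ is \emph{deleted} in $G'$: we may use a word for $G$ in which $x$ occurs one extra time and then fill the two extremal $x$-slots only partially. Concretely, replace the occurrences of $x$ by $D_0,P_1,\dots,P_{k-1},D_k$, where $P_1,\dots,P_{k-1}$ are full permutations of $V(M)$ and $D_0,D_k$ are complementary words partitioning $V(M)$ into a ``left'' and a ``right'' class; each module letter then appears in the $k-1$ middle permutations and in exactly one end block, i.e.\ $k$ times. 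One checks that omitting an \emph{extremal} slot leaves the alternation with every neighbour of $x$ intact, while $D_0P_1\cdots P_{k-1}D_k$ is forced to represent $M$. Thus everything reduces to a self-contained lemma: every comparability graph $M$ admits such a book-ended $k$-uniform representation with $k=\mathcal{R}(M)$, insertable so as to respect all external neighbours of $x$ at once — in essence, that adjoining an apex does not raise the representation number of a comparability graph. I expect this lemma to carry all the real difficulty, after which the verification of the three alternation types is routine.
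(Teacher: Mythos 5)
Your first two steps are sound: the heredity lower bound is right, and the substitution construction (replace each of the $N$ occurrences of $x$ in an $N$-uniform word for $G$ by a full permutation of $V(M)$, with $N\ge\max\{k_1,d\}$ and $p_1\cdots p_d$ a permutational representation of $M$) is correct, with exactly the three alternation checks you list. In fact this \emph{is} the theorem proved in \cite{K13}: there the hypothesis is that $M$ is \emph{permutationally} $k_2$-representable, and the conclusion is that $G'$ is $\max\{k_1,k_2\}$-representable. The survey gives no proof (it cites \cite{K13}), and its restatement with $k_2=\mathcal{R}(M)$ and with equality is a mis-transcription.

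The genuine gap is your final paragraph: the ``self-contained lemma'' to which you defer all the difficulty is false, and so is the equality you are trying to reach. Suppose $M$ had a book-ended $k$-uniform representation $D_0P_1\cdots P_{k-1}D_k$ with each $P_i$ a permutation of $V(M)$ and $D_0,D_k$ splitting $V(M)$. This word is $k$-uniform, so Proposition~\ref{cyclic-shift} applies, and the cyclic shift $D_kD_0P_1\cdots P_{k-1}$ also represents $M$; since $D_kD_0$ is a permutation, $M$ would be permutationally $k$-representable. Hence such a representation with $k=\mathcal{R}(M)$ exists only when $\mathcal{R}(M)$ already equals the permutational representation number --- and for your own example $M=H_{n,n}$, $n\ge 5$, these are $\lceil n/2\rceil$ (Theorem~\ref{crown-thm-main}) and $n$. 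The same argument kills the apex version of the lemma: if $w=v_0xv_1x\cdots xv_k$ is a $k$-uniform word representing $M$ plus an apex $x$, then alternation of every letter with $x$ forces each internal block $v_1,\dots,v_{k-1}$ to be a permutation of $V(M)$ and $v_0,v_k$ to split $V(M)$; deleting $x$ and cyclically shifting exhibits $M$ as permutationally $k$-representable. So $\mathcal{R}(M+\mathrm{apex})$ equals the permutational representation number of $M$, i.e.\ adjoining an apex \emph{does} raise the representation number of comparability graphs in general. Consequently the printed statement itself is false: take $G=K_2$ and $M=H_{5,5}$, so that $G'$ is $H_{5,5}$ with an apex; then $\max\{k_1,k_2\}=\max\{1,3\}=3$, yet $\mathcal{R}(G')=5$ because $H_{5,5}$ requires $5$ permutations. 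No further idea can close your gap; the correct target is what your second paragraph already proves, with $k_2$ taken to be the permutational representation number of $M$ (and then, whenever $x$ has a neighbour, equality $\mathcal{R}(G')=\max\{k_1,k_2\}$ does hold, since $M$ plus an apex sits inside $G'$ as an induced subgraph).
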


\subsection{Cartesian Product of two Graphs}\label{cartesian-product}

The {\em Cartesian product} $G \square H$ of graphs $G=(V(G),E(G))$ and $H=(V(H),E(H))$ is a graph such that
\begin{itemize}
\item the vertex set of $G \square H$ is the Cartesian product $V(G) \times V(H)$; and
\item any two vertices $(u,u')$ and $(v,v')$ are adjacent in $G \square H$ if and only if either
\begin{itemize}
\item $u = v$ and $u'$ is adjacent to $v'$ in $H$, or
\item $u' = v'$ and $u$ is adjacent to $v$ in $G$. 
\end{itemize}
\end{itemize}

See Figure~\ref{cartesian-prod-example} for an example of the Cartesian product of two graphs.

\begin{figure}[h]
\begin{center}

\setlength{\unitlength}{5mm}

\begin{picture}(14,4)

\put(0,0){

\put(1,0){\p}\put(0,2){\p}\put(2,1){\p}\put(1,3){\p}\put(1,4){\p}

\put(1,0){\line(1,1){1}}
\put(0,2){\line(1,1){1}}
\put(1,3){\line(0,1){1}}
\put(0,2){\line(1,-2){1}}
\put(1,3){\line(1,-2){1}}

\put(3.1,1.6){$\square$}

\put(5,1.8){\line(1,0){1}}
\put(5,1.8){\p}\put(6,1.8){\p}

\put(7.1,1.6){$=$}

\put(10,0){\p}\put(9,2){\p}\put(11,1){\p}\put(10,3){\p}\put(10,4){\p}

\put(10,0){\line(1,1){1}}
\put(9,2){\line(1,1){1}}
\put(10,3){\line(0,1){1}}
\put(9,2){\line(1,-2){1}}
\put(10,3){\line(1,-2){1}}

\put(13,0){\p}\put(12,2){\p}\put(14,1){\p}\put(13,3){\p}\put(13,4){\p}

\put(13,0){\line(1,1){1}}
\put(12,2){\line(1,1){1}}
\put(13,3){\line(0,1){1}}
\put(12,2){\line(1,-2){1}}
\put(13,3){\line(1,-2){1}}

\put(10,0){\line(1,0){3}}\put(9,2){\line(1,0){3}}\put(11,1){\line(1,0){3}}\put(10,3){\line(1,0){3}}\put(10,4){\line(1,0){3}}

}

\end{picture}
\caption{Cartesian product of two graphs}\label{cartesian-prod-example}
\end{center}
\end{figure}
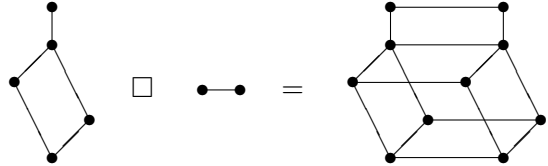

A proof of the following  theorem was given by Bruce Sagan in 2014. The proof relies on semi-transitive orientations and it can be found in \cite[Section 5.4.5]{KL15}.

\begin{theorem}[Sagan]\label{cartesian-product-thm} Let $G$ and $H$ be two word-representable graphs. Then the Cartesian product $G\square H$ is also word-representable. \end{theorem}

\subsection{Rooted Product of Graphs}\label{rooted-product-graphs-sec}

The {\em rooted product} of a graph $G$ and a rooted graph $H$ (i.e. one vertex of $H$ is distinguished), $G\circ H$, is defined as follows: take $|V(G)|$ copies of $H$, and for every vertex $v_i$ of $G$, identify $v_i$ with the root vertex of the $i$th copy of $H$. See Figure~\ref{rooted-prod-example} for an example of the rooted product of two graphs.

\begin{figure}[h]
\begin{center}

\setlength{\unitlength}{5mm}

\begin{picture}(14,4)

\put(0,0){

\put(1,0){\p}\put(0,2){\p}\put(2,1){\p}\put(1,3){\p}\put(1,4){\p}

\put(1,0){\line(1,1){1}}
\put(0,2){\line(1,1){1}}
\put(1,3){\line(0,1){1}}
\put(0,2){\line(1,-2){1}}
\put(1,3){\line(1,-2){1}}

\put(3.1,1.6){$\times$}

\put(5,1.8){\line(1,0){1}}
\put(5,1.8){\p}\put(6,1.8){\p}

\put(7.1,1.6){$=$}

\put(10,0){\p}\put(9,2){\p}\put(11,1){\p}\put(10,3){\p}\put(10,4){\p}

\put(10,0){\line(1,1){1}}
\put(9,2){\line(1,1){1}}
\put(10,3){\line(0,1){1}}
\put(9,2){\line(1,-2){1}}
\put(10,3){\line(1,-2){1}}

\put(13,0){\p}\put(12,2){\p}\put(14,1){\p}\put(13,3){\p}\put(13,4){\p}

\put(10,0){\line(1,0){3}}\put(9,2){\line(1,0){3}}\put(11,1){\line(1,0){3}}\put(10,3){\line(1,0){3}}\put(10,4){\line(1,0){3}}

}

\end{picture}
\caption{Rooted product of two graphs}\label{rooted-prod-example}
\end{center}
\end{figure}
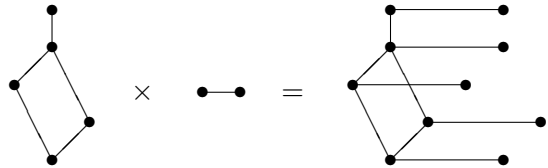

The next theorem is an analogue of Theorem~\ref{cartesian-product-thm} for the rooted product of two graphs.  
 
\begin{theorem}[\cite{KL15}]\label{rooted-product-thm} Let $G$ and $H$ be two word-representable graphs. Then the rooted product $G\circ H$ is also word-representable. \end{theorem}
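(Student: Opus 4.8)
The plan is to produce a semi-transitive orientation of $G\circ H$ out of semi-transitive orientations of $G$ and $H$, and then invoke Theorem~\ref{main-charact}. By that theorem, fix a semi-transitive orientation $\sigma_G$ of $G$ and a semi-transitive orientation $\sigma_H$ of $H$. I orient $G\circ H$ as follows: the edges among the root vertices $v_1,\ldots,v_n$ (which together form an embedded copy of $G$) are oriented according to $\sigma_G$, and the edges of each copy $H_i$ are oriented according to $\sigma_H$. This is well-defined, since distinct copies $H_i$ and $H_j$ are vertex-disjoint except that they have no common vertex at all (their roots $v_i$ and $v_j$ are distinct), and the only edges running between copies are the $G$-edges among the roots. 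Call the resulting orientation $\sigma$.

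The key structural observation I would establish first is that each root $v_i$ is a cut vertex of $G\circ H$ that separates the non-root vertices of $H_i$ from everything else; indeed, by the definition of the rooted product, a non-root vertex of $H_i$ is adjacent only to vertices of $H_i$. Consequently, every block (maximal $2$-connected subgraph) of $G\circ H$ is contained either entirely in one copy $H_i$ or entirely in the embedded copy of $G$.

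From this it follows immediately that $\sigma$ is acyclic: a directed cycle is a $2$-connected substructure, hence lies inside a single block, where it would contradict the acyclicity of $\sigma_G$ or of $\sigma_H$. Similarly, $\sigma$ is shortcut-free: in any shortcut, the shortcutting edge $u\rightarrow w$ together with the source-to-sink directed path passing through all of the shortcut's vertices forms a cycle on that vertex set, so the whole shortcut is $2$-connected and is therefore confined to a single block, again contradicting that $\sigma_G$ and $\sigma_H$ are shortcut-free. Hence $\sigma$ is semi-transitive, and $G\circ H$ is word-representable by Theorem~\ref{main-charact}.

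I expect the main point needing care to be the structural block claim, namely the precise argument that every directed cycle and every shortcut of $G\circ H$ is confined to a single copy $H_i$ or to the embedded copy of $G$; once the cut-vertex structure at the roots is made explicit, acyclicity and shortcut-freeness of $\sigma$ are routine. An alternative route that avoids orientations altogether is to build $G\circ H$ starting from $G$ and successively gluing a fresh copy of $H$, in its root, onto each vertex $v_i$; since gluing two word-representable graphs in a single vertex preserves word-representability (see Section~\ref{glueing-connecting-two-graphs}) and this operation can be iterated over all $n$ roots, $G\circ H$ is word-representable.
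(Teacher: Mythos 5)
Your proposal is correct, but your primary argument takes a genuinely different route from the paper. The paper's proof is a two-sentence reduction: it observes that $G\circ H$ is built by repeatedly gluing a fresh copy of $H$, at its root, onto a vertex of $G$, and that gluing two word-representable graphs in a single vertex preserves word-representability (Section~\ref{glueing-connecting-two-graphs}); iterating over all $|V(G)|$ roots finishes the proof. That is precisely the ``alternative route'' you mention in your last sentence, so you did rediscover the paper's argument as a fallback. Your main argument instead constructs an explicit semi-transitive orientation of $G\circ H$ (copying $\sigma_G$ on the embedded $G$ and $\sigma_H$ on each copy $H_i$) and verifies acyclicity and shortcut-freeness by confining every directed cycle and every shortcut to a single block, using that each root is a cut vertex and that the underlying graph of a cycle or of a shortcut (Hamiltonian path plus shortcutting edge) is $2$-connected. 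This is sound: edges between distinct copies $H_i$, $H_j$ do not exist, edges among roots are exactly the $G$-edges, so the induced orientation on any confined vertex set is a restriction of $\sigma_G$ or $\sigma_H$. In effect you have unrolled the proof of the vertex-gluing lemma (which the paper itself justifies via semi-transitive orientations) into a single global verification. What each approach buys: the paper's proof is shorter and leverages an already-established operation-preservation result; yours is self-contained modulo Theorem~\ref{main-charact}, produces the semi-transitive orientation of $G\circ H$ explicitly, and isolates the reusable structural fact that $2$-connected substructures of $G\circ H$ never straddle a root --- which is exactly why no new cycles or shortcuts can appear.
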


\begin{proof} Identifying a vertex $v_i$ in $G$ with the root vertex of the $i$th copy of $H$ in the definition of the rooted product gives a word-representable graph by the discussion in Section~\ref{glueing-connecting-two-graphs}. Thus, identifying the root vertices, one by one, we will keep obtaining word-representable graphs, which gives us at the end word-representability of $G\circ H$.\end{proof}

\subsection{Taking the Line Graph Operation}

Taking the line graph operation has already been considered in Section~\ref{taking-line-sec}. Based on the results presented in that section, we can see that this operation can turn a word-representable graph into either a word-representable graph or non-word-representable graph. Also, there are examples of when the line graph of a non-word-representable graph is non-word-representable. However, it remains an open problem whether a non-word-representable graph can be turned into a word-representable graph by applying the line graph operation.

\section{Computational Complexity Results and Word-Representability of Planar Graphs}

In this section we will present known complexity results and also discuss word-representability of planar graphs.

\subsection{A Summary of Known Complexity Results}

Even though the Maximum Clique problem is polynomially solvable on word-representable graphs (see Theorem~\ref{Max-Click-thm}), many classical optimization problems are NP-hard on these graphs. The latter follows from the problems being NP-hard on 3-colorable graphs and Theorem~\ref{3-col-thm}. 

The justification of the known complexity results presented in Table~\ref{known-complexities}, as well as the definitions of the problems can be found in \cite[Section 4.2]{KL15}. However, below we discuss a proof of the fact that recognizing word-representability is an NP-complete problem. We refer to \cite[Section 4.2]{KL15} for any missed references to the results we use.

\begin{table}
\begin{center}
\begin{tabular}{c|c}
{\bf problem} & {\bf complexity}  \\
\hline
deciding whether a given graph is word-representable & NP-complete\index{NP-complete problem} \\
\hline
approximating the graph representation number  & NP-hard\index{NP-hard problem} \\
within a factor of
$n^{1-\epsilon}$ for any $\epsilon > 0$ & \\
\hline
Clique\index{Clique Covering problem} Covering & NP-hard \\
\hline
deciding whether a given graph is
$k$-word-representable  & NP-complete \\
for any fixed $k$, $3\leq k\leq\lceil n/2 \rceil$ & \\
\hline
Dominating\index{Dominating Set problem} Set &  NP-hard\\
\hline
Vertex Colouring\index{Vertex Colouring problem} &  NP-hard\\
\hline
Maximum\index{Maximum Clique problem} Clique & in P\\
\hline
Maximum\index{Maximum Independent Set problem} Independent Set & NP-hard\\
\hline
\end{tabular}
\caption{Known complexities for problems on word-representable\index{word-representable graph} graphs}\label{known-complexities}
\end{center}
\end{table} 

Suppose that $P$ is a poset and $x$ and $y$ are two of its elements. We say that $x$ {\em covers} $y$ if $x>y$ and there is no element $z$ in $P$ such that $x>z>y$.

The {\em cover graph} $G_P$ of a poset $P$ has $P$'s elements as its vertices, and $\{x,y\}$ is an edge in $G_P$ if and only if either $x$ covers $y$, or vice versa. The {\em diagram} of $P$, sometimes called a {\em Hasse diagram} or {\em order diagram}, is a drawing of the cover graph of $G$ in the plane with $x$ being higher than $y$ whenever $x$ covers $y$ in $P$. The three-dimensional cube in Figure~\ref{prisms-3-4} is an example of a cover graph. 

Vincent Limouzy observed in 2014 that semi-transitive orientations of triangle-free graphs are exactly the {\em $2$-good orientations} considered in~\cite{Pretzel} by Pretzel (we refer to that paper for the definition of a $k$-good orientation). Thus, by Proposition~1 in  \cite{Pretzel} we have the following reformulation of Pretzel's result in our language.

\begin{theorem}[Limouzy] The class of triangle-free word-representable graphs is exactly the class of cover graphs of posets. \end{theorem}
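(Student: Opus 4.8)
The plan is to route the statement through the semi-transitive orientation characterization of Theorem~\ref{main-charact}, and then to show that, on triangle-free graphs, semi-transitive orientations are exactly the orientations realizing a graph as a Hasse diagram. First I would record the orientation description of cover graphs: a graph $H$ equals the cover graph $G_P$ of some poset $P$ if and only if $H$ admits an acyclic orientation with \emph{no redundant edge}, meaning no edge $u\to v$ for which there is also a directed path from $u$ to $v$ of length at least $2$. One direction is the Hasse orientation (orient each edge from the larger to the smaller element); for the other, take the transitive closure of a redundancy-free acyclic orientation, which is a poset whose covering relations are precisely the oriented edges. As a byproduct I would note that every cover graph is triangle-free, since three pairwise covering elements are impossible: $x$ covering $y$ and $y$ covering $z$ forces $x>z$ but with $x$ \emph{not} covering $z$. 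Hence ``cover graph'' already entails ``triangle-free,'' so the two sides of the asserted equality inhabit the same class of graphs.

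The heart of the argument is the following lemma: on a triangle-free graph, an acyclic orientation is semi-transitive if and only if it has no redundant edge. The ``if'' direction needs no triangle-freeness, since a shortcutting edge is exactly a redundant edge, so a redundancy-free acyclic orientation is automatically shortcut-free, i.e.\ semi-transitive. For the ``only if'' direction I would argue by contradiction: suppose a semi-transitive orientation has a redundant edge, and among all redundancies choose an edge $u\to v$ together with a \emph{shortest} directed path $P$ from $u$ to $v$ of length at least $2$. If $P$ has length exactly $2$, its interior vertex together with $u$ and $v$ forms a triangle, contradicting triangle-freeness. If $P$ has length at least $3$, inspect its first two steps $u\to w_1\to w_2$: were the edge $u\to w_2$ present, then $u\to w_2\to\cdots\to v$ would be a strictly shorter redundant path for $u\to v$, contradicting minimality; and were it absent, the subgraph induced by $P$ would be non-transitive, so the edge $u\to v$ is a shortcutting edge over the path $P$ of at least three edges, contradicting semi-transitivity. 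Either way we reach a contradiction, so no redundant edge can exist.

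With the lemma in hand I would assemble the equivalence. If $G$ is triangle-free and word-representable, Theorem~\ref{main-charact} supplies a semi-transitive orientation, which by the lemma is redundancy-free, so its transitive closure is a poset whose cover graph is $G$. Conversely, if $G$ is a cover graph then it is triangle-free, and its Hasse orientation is acyclic and redundancy-free, hence semi-transitive by the ``if'' direction, hence $G$ is word-representable by Theorem~\ref{main-charact}.

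The main obstacle is the ``only if'' direction of the lemma, where everything hinges on the interplay between triangle-freeness (which eliminates length-$2$ redundancies) and the minimal choice of $P$ (which propagates the analysis along longer paths and forces the non-transitive configuration that witnesses a shortcut). I remark that this lemma is precisely Limouzy's observation that, for triangle-free graphs, semi-transitive orientations coincide with Pretzel's $2$-good orientations, so one may alternatively invoke Proposition~1 of \cite{Pretzel} in place of the direct argument above.
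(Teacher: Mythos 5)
Your proposal is correct, but it takes a genuinely different route from the paper: the paper offers no self-contained argument at all. It simply records Limouzy's observation that semi-transitive orientations of triangle-free graphs coincide with Pretzel's $2$-good orientations and then invokes Proposition~1 of \cite{Pretzel}, so the entire mathematical content is outsourced to that reference. You instead prove everything from scratch: (i) a graph is a cover graph of a poset if and only if it admits an acyclic orientation with no redundant edge (Hasse orientation in one direction, transitive closure in the other, with the check that covers are exactly the oriented edges); (ii) on triangle-free graphs, semi-transitive orientations are exactly the redundancy-free acyclic ones, where triangle-freeness kills length-$2$ redundancy witnesses and the shortest-witness argument converts any longer redundancy into a genuine shortcut; and (iii) the assembly via Theorem~\ref{main-charact}. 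Your lemma (ii) is precisely the content of Limouzy's observation, with ``redundancy-free acyclic orientation'' playing the role of Pretzel's ``$2$-good''/diagram orientation, so the two proofs share the same underlying insight; what your version buys is self-containedness (no need for Pretzel's definitions), an explicit proof that cover graphs are triangle-free (which the theorem statement silently requires for the two classes to match), and a transparent view of exactly where triangle-freeness is used --- only to rule out paths of length $2$, since a transitive closure over an intermediate vertex would otherwise rescue the orientation. The paper's citation route buys brevity and a pointer to the historical origin of the result. Two cosmetic points: your phrase ``a shortcutting edge is exactly a redundant edge'' should read ``every shortcutting edge is redundant'' (the converse is false, e.g.\ in a transitive tournament), and in your case analysis the alternative ``$u\to w_2$ present'' is forced to have that orientation by acyclicity, which is worth saying explicitly; neither affects correctness.
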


It was further observed by  Limouzy, that it is an NP-complete problem to recognize the class of cover graphs of posets. This implies the following theorem, which is a key complexity result on word-representable graphs.

\begin{theorem}[Limouzy]\label{recogn-word-repr-NP-complete} It is an NP-complete problem to recognize whether a given graph is word-representable. \end{theorem}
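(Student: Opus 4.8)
The plan is to establish NP-completeness in the two usual steps: membership in NP, and NP-hardness via a reduction. The membership direction is the easy one, and I would simply invoke the consequence of Theorem~\ref{main-length} already recorded above. That corollary tells us that every word-representable graph on $n$ vertices admits a word-representant of length $O(n^2)$. Such a word is a polynomial-size certificate, and verifying that it actually represents the input graph $G$---checking, for each of the $O(n^2)$ pairs of letters, whether they alternate, and comparing this against the edge set of $G$---can be carried out in polynomial time with $O(n^2)$ passes through the word. Hence the recognition problem lies in NP.

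The hardness direction carries the real content, and I would obtain it by a reduction from the problem of recognizing cover graphs of posets, which, as observed by Limouzy just above, is NP-complete. The bridge is Limouzy's preceding characterization: a \emph{triangle-free} graph is word-representable if and only if it is the cover graph of some poset. First I would record the elementary fact that every cover graph is triangle-free---three mutually cover-related elements would form a chain $a<b<c$, but then $c$ cannot cover $a$, since $b$ lies strictly between them, a contradiction. Consequently the cover-graph recognition problem may be taken, without loss of generality, on triangle-free inputs: any graph containing a triangle is immediately a \emph{no}-instance, this preprocessing is polynomial, and it leaves the NP-hardness intact. On triangle-free inputs the characterization makes ``word-representable'' and ``cover graph of a poset'' literally the same property, so the identity map is a polynomial-time reduction from cover-graph recognition to word-representability recognition. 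Together with membership in NP, this yields NP-completeness.

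The main obstacle is not in the reduction itself---which is essentially trivial once the characterization is available---but in the two imported ingredients it rests on. The first is the NP-completeness of recognizing cover graphs of posets, a classical result in order theory reiterated by Limouzy above; the second is the biconditional identification of triangle-free word-representable graphs with cover graphs, which is itself a translation of Pretzel's theorem on $2$-good orientations through the semi-transitive characterization of Theorem~\ref{main-charact}. I would therefore spend most of the care ensuring that the hardness of cover-graph recognition is genuinely available on triangle-free instances, and that the characterization is applied in the correct direction: a polynomial-time algorithm for word-representability would decide cover-graph membership (on triangle-free inputs), which is exactly what contradicts the assumed NP-completeness.
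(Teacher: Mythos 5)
Your proposal is correct and follows essentially the same route as the paper: membership in NP via the $O(n^2)$-length word-representant guaranteed by Theorem~\ref{main-length}, and NP-hardness by transferring the NP-completeness of cover-graph recognition through Limouzy's identification of triangle-free word-representable graphs with cover graphs of posets. The only difference is that you spell out details the paper leaves implicit---that cover graphs are themselves triangle-free, so the reduction may reject (or map to a fixed non-word-representable graph) any input containing a triangle and otherwise act as the identity---which is a welcome but minor elaboration.
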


\subsection{Word-Representability of Planar Graphs}

Recall that not all planar graphs are word-representable. Indeed, for example, wheel graphs $W_{2n+1}$, or graphs in Figure~\ref{2-counterex}, are not word-representable. 

\begin{theorem}[\cite{HKP11}]  Triangle-free planar graphs are word-representable. \end{theorem}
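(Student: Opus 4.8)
The plan is to reduce the statement to the already-established result that every $3$-colorable graph is word-representable, namely Theorem~\ref{3-col-thm}. Thus the entire burden shifts to showing that a triangle-free planar graph is $3$-colorable. This is precisely the content of \emph{Gr\"otzsch's theorem}, a classical result in graph coloring which asserts that every planar graph containing no triangle admits a proper $3$-coloring. I would therefore structure the argument in two clean steps: first invoke Gr\"otzsch's theorem to produce a proper $3$-coloring of the given triangle-free planar graph $G$, and second apply Theorem~\ref{3-col-thm} to conclude that $G$ is word-representable.

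Concretely, let $G$ be an arbitrary triangle-free planar graph. By Gr\"otzsch's theorem there exists a proper coloring $c: V(G) \to \{1,2,3\}$ in which adjacent vertices receive distinct colors. Feeding this coloring into the mechanism of Theorem~\ref{3-col-thm}, one orients each edge according to the order $1 \rightarrow 2 \rightarrow 3$ of its endpoints' colors; as observed there, this yields an acyclic orientation whose longest directed path has only three vertices, hence admits no shortcut, and is therefore semi-transitive. By Theorem~\ref{main-charact}, $G$ is word-representable.

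The only genuinely substantial ingredient is Gr\"otzsch's theorem itself, whose proof is quite involved and which I would cite rather than reprove; everything downstream of it is a direct appeal to Theorem~\ref{3-col-thm}. Consequently, the step I expect to be the crux is not any new combinatorial construction but rather the recognition that $3$-colorability is exactly the bridge between the hypothesis (triangle-free planarity) and the conclusion (word-representability). Once Gr\"otzsch's theorem is in hand, the remainder of the argument is immediate and requires no further analysis of word-representants or semi-transitive orientations beyond what Theorem~\ref{3-col-thm} already supplies.
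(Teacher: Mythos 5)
Your proposal is correct and follows exactly the paper's own proof: invoke Gr\"otzsch's theorem to obtain a proper $3$-coloring of the triangle-free planar graph, then apply Theorem~\ref{3-col-thm}. The additional detail you give about orienting edges $1\rightarrow 2\rightarrow 3$ to get a semi-transitive orientation is just the content of Theorem~\ref{3-col-thm} itself, so the two arguments coincide.
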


\begin{proof} By Gr\"otzch's theorem \cite{Thomassen}, {\em every} triangle-free planar graph is $3$-colorable, and Theorem~\ref{3-col-thm} can be applied.\end{proof}

It remains a challenging open problem to classify word-representable planar graphs. Towards solving the problem, various {\em triangulations} and certain {\em subdivisions} of planar graphs were considered to be discussed next. Key tools to study word-representability of planar graphs are {\em $3$-colorability} and {\em semi-transitive orientations}.

\subsubsection{Word-Representability of Polyomino Triangulations}

A {\em polyomino} is a plane geometric figure formed by joining one or more equal squares edge to edge. 
Letting corners of squares in a polyomino be vertices, we can treat polyominoes as graphs. In particular, well known {\em grid graphs} are obtained from polyominoes in this way. Of particular interest to us are  {\em convex polyominoes}. A polyomino is said to be {\em column convex} if its intersection with any vertical line is convex (in other words, each column has no holes). Similarly, a polyomino is said to be {\em row convex} if its intersection with any horizontal line is convex. A polyomino is said to be {\em convex} if it is row and column convex.

When dealing with word-representability of triangulations of convex polyominoes (such as in Figure~\ref{polyom-triangl}), one should watch for {\em odd wheel graphs} as induced subgraphs (such as the part of the graph in bold in Figure~\ref{polyom-triangl}). Absence of such subgraphs will imply 3-colorability and thus word-representability, which is the basis of the proof of the following theorem. 

\begin{theorem}[\cite{AKM15}]\label{tri-con-pol} A triangulation of a {\em convex} polyomino is word-representable if and only if it is $3$-colorable. There are non-$3$-colorable word-representable {\em non-convex} polyomino triangulations. \end{theorem}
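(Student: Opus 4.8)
The plan is to sandwich word-representability and $3$-colorability around the combinatorial invariant ``every interior vertex has even degree,'' and then to exhibit an explicit non-convex witness for the second statement. One direction of the equivalence is immediate: a $3$-colorable graph is word-representable by Theorem~\ref{3-col-thm}. For the converse I would argue by contraposition, so assume the triangulation $T$ of a convex polyomino is \emph{not} $3$-colorable and aim to produce an induced odd wheel $W_{2k+1}$ with $k\geq 2$. Since such wheels are not word-representable (see Section~\ref{perm-rep-graphs-sec}) and word-representability is hereditary (Remark~\ref{hereditary}), the mere presence of such an induced subgraph forces $T$ to be non-word-representable.

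The heart of the matter is to localize non-$3$-colorability to a single vertex. In any proper $3$-coloring the neighbours of an interior vertex $v$ (a grid point not on the boundary of the polyomino) form a cycle, its \emph{link}, which must be $2$-coloured by the two colours other than $c(v)$, so the link has even length; hence $3$-colorability forces every interior vertex to have even degree. Conversely, in a polyomino each grid point is surrounded by exactly four unit squares and each square contributes a single diagonal, so an interior vertex has degree in $\{4,5,6,7,8\}$, and since no square offers both diagonals its link is always chordless. Consequently an interior vertex of \emph{odd} degree (necessarily $5$ or $7$) is the hub of an \emph{induced} $W_5$ or $W_7$, exactly the obstruction sought. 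The remaining, and main, step is the reverse implication, that all interior degrees being even forces a global $3$-coloring to exist, and this is precisely where convexity enters: the local ``even'' colourings of the triangular fans around interior vertices must be glued consistently over the whole region, and I expect the simple convex shape to rule out the global monodromy obstruction to such gluing. I would establish this by induction, peeling off a boundary row of squares (or an ear) of the convex polyomino while checking that doing so neither destroys convexity nor creates a new odd interior vertex, down to a trivially colorable base case; controlling the boundary under this peeling is the delicate part.

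For the second statement I would display a concrete non-convex polyomino triangulation in which every interior vertex still has even degree, so that the local obstruction above is absent and no induced odd wheel occurs, yet a proper $3$-coloring cannot be closed up around the non-convex part of the region, making the graph non-$3$-colorable. To certify that it is nonetheless word-representable I would bypass $3$-colorability entirely and instead construct a \emph{semi-transitive orientation} directly, orienting the square diagonals and boundary edges so that no directed path can short-circuit a triangle fan, and then invoke Theorem~\ref{main-charact}. Verifying acyclicity and, above all, shortcut-freeness of this orientation is the technical crux of this half, and it is exactly the absence of odd wheels that makes such a valid orientation available in the first place.
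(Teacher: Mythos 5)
Your overall strategy coincides with the proof idea the paper attributes to \cite{AKM15}: the only obstructions are induced odd wheels, $3$-colorability gives word-representability via Theorem~\ref{3-col-thm}, and the hereditary property (Remark~\ref{hereditary}) together with non-word-representability of $W_5$ and $W_7$ gives the converse. Your local analysis is correct and is a genuine elaboration of the paper's sketch: the link of an interior vertex is a chordless cycle of length between $4$ and $8$, a proper $3$-coloring forces every such link to be even, and an odd link yields an induced $W_5$ or $W_7$. The remaining lemma (all interior degrees even implies $3$-colorable) is only sketched, but your monodromy idea is the right one and can be completed. Note, however, that what this argument really uses is that the region is \emph{simply connected}, not that it is convex: every closed dual loop is contractible, hence decomposes into loops around interior vertices, and an even interior vertex has trivial color monodromy. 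Convexity is just a convenient special case of simple connectivity. This imprecision is not cosmetic; it undermines the second half of your proposal.

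The genuine gap is in the second statement. You plan to exhibit a non-convex polyomino triangulation with no induced odd wheels that fails to be $3$-colorable ``around the non-convex part of the region.'' But by your own reasoning, if the polyomino is simply connected --- and non-convex polyominoes such as L-shapes or staircases are simply connected --- then even interior degrees already force $3$-colorability, so no such example exists there: any non-$3$-colorable triangulation of a simply connected polyomino has an odd interior vertex, hence an induced odd wheel, hence is \emph{not} word-representable. Consequently, any witness for the second statement must be a polyomino with a \emph{hole}, so that the color monodromy obstruction can live on a non-contractible loop; this is exactly how the counterexample of \cite{AKM15} is built (and a polyomino with a hole is automatically non-convex, which is why ``non-convex'' appears in the statement at all). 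As written, your plan would have you searching among simply connected non-convex shapes, where the search is provably futile, and you also never actually produce the example or verify its semi-transitive orientation via Theorem~\ref{main-charact}, which is the entire content of that half of the theorem.
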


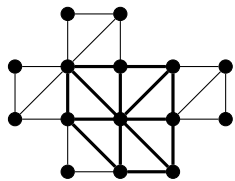
\begin{figure} 
\begin{center}
\begin{tikzpicture}[node distance=0.7cm,auto,main node/.style={fill,circle,draw,inner sep=0pt,minimum size=5pt}]

\node[main node] (1) {};
\node[main node] (2) [right of=1] {};
\node[main node] (3) [below of=1] {};
\node[main node] (4) [right of=3] {};
\node[main node] (5) [left of=3] {};
\node[main node] (6) [below of=5] {};
\node[main node] (7) [right of=6] {};
\node[main node] (8) [right of=7] {};
\node[main node] (9) [right of=4] {};
\node[main node] (10) [right of=9] {};
\node[main node] (11) [right of=8] {};
\node[main node] (12) [right of=11] {};
\node[main node] (13) [below of=7] {};
\node[main node] (14) [right of=13] {};
\node[main node] (15) [right of=14] {};

\path
(1) edge node  {} (13)
     edge node  {} (2);

\path
(2) edge node  {} (14)
     edge node  {} (3);

\path
(5) edge node  {} (10)
     edge node  {} (6);

\path
(6) edge node  {} (3)
     edge node  {} (12);

\path
(15) edge node  {} (13)
       edge node  {} (9)
       edge node  {} (3);

\path
(7) edge node  {} (14);

\path
(8) edge node  {} (9);

\path
(10) edge node  {} (11)
     edge node  {} (12);

\path
(3) [very thick] edge node  {} (9)
      [very thick] edge node  {} (15)
     [very thick] edge node  {} (7);

\path
(14) [very thick] edge node  {} (4)
      [very thick] edge node  {} (15)
     [very thick] edge node  {} (7);

\path
(9) [very thick] edge node  {} (8)
      [very thick] edge node  {} (15);

\path
(7) [very thick] edge node  {} (11);

\end{tikzpicture}

\vspace{-5mm}
\caption{A triangulation of a poyomino}\label{polyom-triangl}
\end{center}

\end{figure}

The case of rectangular polyomino triangulations with a single domino tile (such as in Figure~\ref{rec-polyom-triangl}) is considered in the next theorem.

\begin{theorem}[\cite{GK18}]\label{thm-rec-po-tri} A triangulation of a rectangular polyomino with a single domino tile is 
word-representable if and only if it is $3$-colorable. \end{theorem}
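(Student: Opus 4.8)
The forward implication is immediate and requires no new work: if the triangulation is $3$-colorable, then it is word-representable by Theorem~\ref{3-col-thm}. So the entire content of the proof lies in the reverse implication, which I would establish in its contrapositive form: if a triangulation $T$ of a rectangular polyomino with a single domino tile is \emph{not} $3$-colorable, then $T$ is \emph{not} word-representable. The driving mechanism here is the combination of Remark~\ref{hereditary} (word-representability is hereditary) with the fact, recalled in Section~\ref{perm-rep-graphs-sec}, that the odd wheels $W_{2k+1}$ ($k\geq 2$) are non-word-representable. Consequently, it suffices to locate an \emph{induced} odd wheel inside any non-$3$-colorable $T$.

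The technical heart of the argument is therefore a structural lemma: \emph{a triangulation of a rectangular polyomino with a single domino tile fails to be $3$-colorable if and only if it contains an induced odd wheel.} One direction is trivial, since an odd wheel is not $3$-colorable and $3$-colorability is hereditary. For the substantive direction, I would exploit the fact that, away from the domino, $T$ restricts to a triangulation of a rectangular (hence \emph{convex}) polyomino, to which Theorem~\ref{tri-con-pol} and the $3$-colorability analysis underlying it apply directly; in that region the only possible obstruction to $3$-colorability is already an odd wheel. The only genuinely new phenomenon must thus be localized at the domino tile, whose presence removes the interior grid vertex that a pair of unit cells would otherwise contribute. I would then carry out a finite case analysis on the local configuration around the domino: the domino admits only a bounded number of diagonal triangulations, and each of the finitely many cells bordering it admits only two diagonal orientations, so the neighborhood of the domino has a bounded number of combinatorial types. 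For each type I would trace the local $3$-coloring constraints (equivalently, parities of vertex degrees in the triangulated disk) and verify that whenever a proper $3$-coloring cannot be propagated across the domino, an induced odd wheel is forced among the domino's corner vertices and their common neighbors.

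The step I expect to be the main obstacle is exactly this local analysis around the domino, and specifically ruling out the possibility that deleting the midpoint vertex manufactures a \emph{new} kind of obstruction---a configuration that is non-$3$-colorable yet contains no odd wheel---which would break the equivalence and invalidate the whole approach. A clean way to tame this is to introduce the auxiliary triangulation $T'$ obtained by reinserting the midpoint vertex and splitting the domino into two unit cells: $T'$ is an honest convex-polyomino triangulation governed by Theorem~\ref{tri-con-pol}, so I would compare the $3$-colorings and semi-transitive orientations (in the sense of Theorem~\ref{main-charact}) of $T$ and $T'$, reducing the entire question to controlling the single vertex and the few edges by which $T$ and $T'$ differ. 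If the comparison shows that $T$ is non-$3$-colorable precisely when $T'$ is, the lemma follows from the convex case; the residual work is to handle the boundary cases where the domino sits against the outer rectangle, where the missing vertex can locally change degree parities, and to confirm that these cases too produce an induced odd wheel rather than an exotic obstruction.
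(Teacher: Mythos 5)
Your high-level reduction is the right one, and it is the only strategy the paper itself hints at: the survey contains no proof of Theorem~\ref{thm-rec-po-tri} (it is quoted from \cite{GK18}), and the sole guidance in the text is the remark before Theorem~\ref{tri-con-pol} that for polyomino triangulations one should watch for induced odd wheels, whose absence yields $3$-colorability and hence word-representability via Theorem~\ref{3-col-thm}. Your scaffolding --- the easy direction from Theorem~\ref{3-col-thm}, the hard direction from Remark~\ref{hereditary} plus non-word-representability of $W_{2k+1}$ ($k\geq 2$), all reduced to the structural lemma that non-$3$-colorability forces an induced odd wheel --- matches that intended route exactly.

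The genuine gap is that this structural lemma \emph{is} the theorem, and neither of the two devices you propose for proving it works as described. First, your claim that ``away from the domino, $T$ restricts to a triangulation of a rectangular (hence convex) polyomino'' is false in general: a rectangle minus a domino need not be convex (a domino flush with the middle of a side splits a row or column of cells, and an interior domino leaves a region with a hole), so Theorem~\ref{tri-con-pol} cannot be invoked on the complement. Worse, the second assertion of Theorem~\ref{tri-con-pol} says there exist \emph{non-$3$-colorable, word-representable} non-convex polyomino triangulations; by Remark~\ref{hereditary} such graphs contain no induced odd wheel, so the principle ``in that region the only obstruction to $3$-colorability is an odd wheel'' is precisely what breaks on non-convex regions --- the very regions your decomposition produces. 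Second, the comparison with the auxiliary triangulation $T'$ transfers nothing: $T$ and $T'$ differ in their edge sets over the shared vertices (the diagonals triangulating the domino face in $T$ are not edges of $T'$, which instead carries the splitting edge and new diagonals of the two restored cells), so neither proper $3$-colorings nor semi-transitive orientations (Theorem~\ref{main-charact}) restrict from one graph to the other in either direction; the equivalence ``$T$ is non-$3$-colorable iff $T'$ is'' depends on how the restored cells are triangulated and is essentially as hard as the theorem itself, not a consequence of the convex case. Finally, the danger you flag --- an exotic non-word-representable obstruction that is not a wheel --- is not hypothetical in this circle of problems: Theorem~\ref{thm-3} and Figure~\ref{non-repr-induced-subgraphs} exhibit exactly such minimal obstructions for triangulations of grid-covered cylinders with three sectors. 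Ruling them out for rectangular polyominoes with a single domino tile is the actual technical content of \cite{GK18}; your proposal correctly locates where that work must happen, but defers it rather than carrying it out.
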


\begin{figure} 
\begin{center}
\begin{tikzpicture}[node distance=0.7cm,auto,main node/.style={fill,circle,draw,inner sep=0pt,minimum size=5pt}]

\node[main node] (1) {};
\node[main node] (2) [right of=1] {};
\node[main node] (3) [below of=1] {};
\node[main node] (4) [right of=3] {};
\node[main node] (5) [left of=3] {};
\node[main node] (6) [below of=5] {};
\node[main node] (7) [right of=6] {};
\node[main node] (8) [right of=7] {};
\node[main node] (9) [right of=4] {};
\node[main node] (10) [right of=9] {};
\node[main node] (11) [right of=8] {};
\node[main node] (12) [right of=11] {};
\node[main node] (13) [below of=7] {};
\node[main node] (14) [right of=13] {};
\node[main node] (15) [right of=14] {};
\node[main node] (16) [left of=13] {};
\node[main node] (17) [right of=15] {};
\node[main node] (18) [left of=1] {};
\node[main node] (19) [right of=2] {};
\node[main node] (20) [right of=19] {};

\path
(1) edge node  {} (13)
     edge node  {} (2);

\path
(20) edge node  {} (2)
     edge node  {} (10);

\path
(18) edge node  {} (1)
      edge node  {} (3)
     edge node  {} (5);

\path
(16) edge node  {} (6)
     edge node  {} (7)
     edge node  {} (13);

\path
(17) edge node  {} (11)
     edge node  {} (12)
     edge node  {} (15);

\path
(2) edge node  {} (14)
     edge node  {} (3);

\path
(5) edge node  {} (10)
     edge node  {} (6);

\path
(6) edge node  {} (3)
     edge node  {} (12);

\path
(15) edge node  {} (13)
       edge node  {} (11)
       edge node  {} (3);

\path
(7) edge node  {} (14);

\path
(11) edge node  {} (10);

\path
(8) edge node  {} (20)
     edge node  {} (10);

\path
(10) edge node  {} (11)
     edge node  {} (12);

\path
(19) edge node  {} (4)
     edge node  {} (9);

\path
(4) [very thick] edge node  {} (10)
       [very thick] edge node  {} (8);

\path
(12) [very thick] edge node  {} (10)
       [very thick] edge node  {} (8);

\end{tikzpicture}

\vspace{-5mm}
\caption{A triangulation of a rectangular polyomino with a single domino tile}\label{rec-polyom-triangl}
\end{center}

\end{figure}
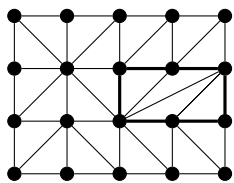

\subsubsection{Word-Representability of Near-Triangulations}

A {\em near-triangulation} is a planar graph in which each {\em inner bounded face} is a triangle (where the {\em outer face} may possibly not be a triangle).

The following theorem is a far-reaching generalization of Theorems~\ref{tri-con-pol} and~\ref{thm-rec-po-tri}.

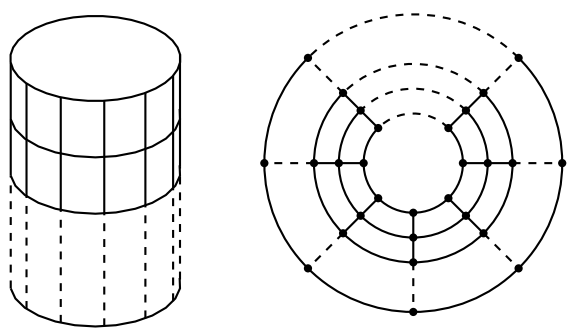
\begin{figure}
 \begin{center}
\begin{tikzpicture}[scale=1,x={(-0.8cm,-0.4cm)},y={(0.8cm,-0.4cm)},
    z={(0cm,1cm)},font=\large]
	\def\h{1.5}

	\foreach \t in {-39,-9,...,154}
		\draw[thick] ({cos(\t)},{sin(\t)},{1*\h}) --({cos(\t)},{sin(\t)},{2.0*\h});
	\foreach \t in {-39,-9,...,154}
		\draw[thick,dashed] ({cos(\t)},{sin(\t)},0) --({cos(\t)},{sin(\t)},{1*\h});

	\draw[,thick] ({cos(-39)},{sin(-39)},0) 
		\foreach \t in {-39,-25,...,154}
			{--({cos(\t)},{sin(\t)},0)};
			
	\draw[,thick] (1,0,{2*\h}) 
		\foreach \t in {10,20,...,360}
			{--({cos(\t)},{sin(\t)},{2*\h})}--cycle;
			
	\draw[,thick] ({cos(-39)},{sin(-39)},{1.5*\h}) 
		\foreach \t in {-39,-25,...,154}
			{--({cos(\t)},{sin(\t)},{1.5*\h})};
				
	\draw[,thick] ({cos(-39)},{sin(-39)},{1*\h}) 
		\foreach \t in {-39,-25,...,154}
			{--({cos(\t)},{sin(\t)},{1*\h})};
		
\end{tikzpicture}
\qquad
\begin{tikzpicture} [scale=.66, line join=round, >=latex,thick]
\foreach \r in {1,1.5,2,3}
{
\draw[dashed] ([shift=(45:\r cm)]0,0) arc (45:135:\r cm);
\draw[] ([shift=(45:\r cm)]0,0) arc (45:-225:\r cm);
}
\foreach \a in {-2,-1,0,1,-3,-4,-5}
{
\draw (45*\a:1 cm)-- (45*\a:2 cm);
\draw[dashed] (45*\a:2 cm)-- (45*\a:3 cm);
 \fill[black!100] (45*\a:1 cm) circle(0.5ex)
                 (45*\a:1.5 cm) circle(0.5ex)
                 (45*\a:2 cm) circle(0.5ex)
                 (45*\a:3 cm) circle(0.5ex)
                ;
}
\end{tikzpicture}
\caption{Grid-covered cylinder}\label{grid-covered-cylind-pic}
\end{center}
\end{figure}

\begin{theorem}[\cite{G16}] A $K_4$-free near-triangulation is $3$-colorable if and only if it is word-representable. \end{theorem}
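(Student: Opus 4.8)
The statement is an equivalence, and one direction is immediate: if a ($K_4$-free) near-triangulation $G$ is $3$-colorable, then $G$ is word-representable by Theorem~\ref{3-col-thm}, a conclusion that uses neither planarity nor the near-triangulation hypothesis. So the entire content lies in the converse, which I would prove in contrapositive form: a $K_4$-free near-triangulation that is \emph{not} $3$-colorable is \emph{not} word-representable. By Theorem~\ref{main-charact} this is the same as showing that such a graph admits no semi-transitive orientation, and the route I would take is indirect: exhibit inside it a small non-word-representable subgraph, namely an odd wheel, and then invoke the hereditary nature of the class (Remark~\ref{hereditary}).

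The first real step is a purely coloring-theoretic claim about near-triangulations: the only obstruction to $3$-colorability is an interior vertex of odd degree. Concretely, in a near-triangulation the neighbours of any interior vertex $v$ close up into a cycle $u_1u_2\cdots u_d$, since consecutive neighbours are adjacent because every inner face is a triangle; hence $\{v\}\cup\{u_1,\dots,u_d\}$ spans a wheel $W_d$. I would use the classical fact that a triangulated disc is $3$-colorable if and only if every interior vertex has even degree (reducing to the $2$-connected case so the outer boundary is a cycle), so that if $G$ is not $3$-colorable then some interior vertex $v$ has odd degree $d$. Because $G$ is $K_4$-free we cannot have $d=3$ (a degree-$3$ interior vertex whose three neighbours form a triangle would complete a $K_4$), so in fact $d\ge 5$ is odd, giving a wheel $W_{2k+1}$ with $k\ge 2$ carried on the closed neighbourhood of $v$.

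It then remains to upgrade this wheel to an \emph{induced} odd wheel, and this is where I expect the main difficulty. The rim $u_1\cdots u_d$ might carry chords. A ``short'' chord $u_iu_{i+2}$ need not be worried about, since together with $v$ it would complete a $K_4$ and is therefore excluded; but a ``long'' chord $u_iu_j$ is forced to be drawn \emph{outside} the disc bounded by the rim (inside it would have to cross a spoke, violating planarity), and such a chord, while creating no $K_4$, does destroy inducedness. The plan here is a minimal-counterexample reduction: among all $K_4$-free non-$3$-colorable near-triangulations pick one with fewest vertices, and show that a long rim chord would allow one to cut the outer region along the chord and extract a strictly smaller $K_4$-free non-$3$-colorable near-triangulation, contradicting minimality. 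Hence the chosen wheel is chordless, i.e.\ $\{v\}\cup\{u_1,\dots,u_d\}$ induces $W_{2k+1}$ with $k\ge 2$.

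Finally I would close the argument with the already-established fact that the odd wheels $W_{2n+1}$, $n\ge 2$, are non-word-representable: their rims are odd cycles $C_{2n+1}$, which are not comparability graphs, so by Theorem~\ref{perm-versus-wr} together with Theorem~\ref{Kit-Sei-thm} the wheel is not word-representable. Since word-representability is hereditary (Remark~\ref{hereditary}), any graph containing an induced $W_{2k+1}$ fails to be word-representable, which completes the contrapositive and hence the theorem. The one step that genuinely needs care, and that I expect to occupy most of the write-up, is the planar surgery in the minimal-counterexample reduction: removing long rim chords while simultaneously preserving $K_4$-freeness, the near-triangulation property, and non-$3$-colorability.
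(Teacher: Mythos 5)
The survey states this theorem without proof (it is imported from \cite{G16}), so your proposal can only be judged on its own merits. Its skeleton is sound: the forward direction is indeed immediate from Theorem~\ref{3-col-thm}, and the contrapositive of the converse can be completed via the classical even-degree criterion for triangulated discs, an interior vertex $v$ of odd degree $d\geq 5$ (degree $3$ being excluded by $K_4$-freeness), an induced odd wheel, Theorems~\ref{perm-versus-wr} and~\ref{Kit-Sei-thm}, and Remark~\ref{hereditary}. The genuine gap is exactly the step you flagged: the minimal-counterexample surgery. There are two problems. First, the logical frame is off: the vertex-minimal $K_4$-free non-$3$-colorable near-triangulation is just $W_5$ itself (any $5$-vertex graph with chromatic number at least $4$ contains $K_4$), so showing that \emph{this} minimal graph carries a chordless wheel proves only that $W_5$ is non-word-representable; to reach all graphs in the class you must minimize over counterexamples to the implication itself, and then the extracted graph has to be an \emph{induced} subgraph, since otherwise heredity cannot be invoked to keep it word-representable. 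Second, the surgery itself fails. Let $G'$ be $W_7$ (hub $v$, rim $u_1u_2\cdots u_7$) with the long chord $u_1u_4$ drawn outside the rim, plus a vertex $w$ placed in the quadrilateral bounded by $u_1u_2,u_2u_3,u_3u_4,u_4u_1$ and joined to $u_1,u_2,u_3,u_4$. Then $G'$ is a $K_4$-free, non-$3$-colorable near-triangulation, but the chord lies on the outer boundary, so cutting off the region beyond it removes nothing, while cutting the other way (deleting $w$) leaves a quadrilateral inner face, so the smaller graph is no longer a near-triangulation. Note also that a chord-cut separates the graph along a cycle of length at least $4$ on each side (distance-$2$ chords are excluded by $K_4$-freeness), not along a clique, so non-$3$-colorability need not localize to either piece: two $3$-colorable graphs glued along a $4$-cycle can fail to be $3$-colorable.

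The step can be repaired, and in a way that makes the proof shorter: no surgery and no minimality are needed. The induced subgraph $G[N(v)]$ contains the odd link cycle, hence contains an odd cycle; a \emph{shortest} odd cycle $C'$ in $G[N(v)]$ is automatically chordless, since a chord would split it into two cycles whose lengths sum to $|C'|+2$, one of which would be a shorter odd cycle. Thus $C'$ is induced in $G[N(v)]$ and therefore in $G$; by $K_4$-freeness $|C'|\neq 3$, so $|C'|\geq 5$ is odd; and since every vertex of $C'$ is adjacent to $v$, the set $\{v\}\cup V(C')$ induces a wheel $W_{2k+1}$ with $k\geq 2$. In the example $G'$ above this yields the induced $W_5$ on $\{v,u_1,u_4,u_5,u_6,u_7\}$: the troublesome chord is not an obstacle but a rim edge of a shorter odd wheel. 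With this replacement, the rest of your outline (reduction to $2$-connected blocks, the even-degree criterion, non-word-representability of odd wheels, and heredity) goes through and completes the argument.
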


Characterization of word-representable near-triangulations (containing $K_4$) is still an open problem.

\subsubsection{Triangulations of Grid-covered Cylinder Graphs}

A {\em grid-covered cylinder}, {\em GCC} for brevity, is a 3-dimensional figure formed by drawing vertical lines and horizontal circles on the surface of a cylinder, each of which are parallel to the generating line and the upper face of the cylinder, respectively. A GCC can be thought of as the object obtained by gluing the left and right sides of a rectangular grid. See the left picture in Figure~\ref{grid-covered-cylind-pic} for a schematic way to draw a GCC. The vertical lines and horizontal circles are called the {\em grid lines}. The part of a GCC between two consecutive vertical lines defines a {\em sector}.

Any GCC defines a graph, called {\em grid-covered cylinder graph}, or {\em GCCG}, whose set of vertices is given by intersection of the grid lines, and whose edges are parts of grid lines between the respective vertices.  A typical triangulation of a GCCG is presented schematically in Figure~\ref{triang-grid-cover-cylinder-general}.

Word-representability of triangulations of any GCCG is completely characterized by the following two theorems, which take into consideration the number of sectors in a GCCG.

\begin{theorem}[\cite{CKS16A}]\label{thm-morethan-3} A triangulation of a GCCG with {\em more than three} sectors is word-representable if and only if it contains no $W_5$ or $W_7$ as an induced subgraph. \end{theorem}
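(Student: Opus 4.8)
The plan is to prove the two implications separately, with necessity being immediate and sufficiency carrying essentially all of the weight.

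For the \emph{only if} direction I would argue purely by heredity. Recall from Section~\ref{perm-rep-graphs-sec} that every odd wheel $W_{2n+1}$ with $n\geq 2$ fails to be word-representable; in particular $W_5$ and $W_7$ are non-word-representable. Since word-representability is a hereditary property (Remark~\ref{hereditary}), any triangulation containing $W_5$ or $W_7$ as an induced subgraph is itself non-word-representable. I would also record here \emph{why} the obstruction list is exactly $\{W_5,W_7\}$ rather than the whole family $W_5,W_7,W_9,\dots$: in any triangulation of a GCCG an interior vertex is a corner of at most four grid cells, each of which contributes at most one diagonal through it, so every vertex has degree at most $8$. An induced $W_d$ forces its apex to have at least $d$ neighbours, whence $d\leq 8$, and the only non-word-representable odd wheels that can possibly occur are $W_5$ and $W_7$. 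Thus ``$W_5$-free and $W_7$-free'' coincides with ``no non-word-representable induced wheel.''

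For the \emph{if} direction I would invoke Theorem~\ref{main-charact} and construct a semi-transitive orientation of a $W_5$- and $W_7$-free triangulation $T$. It is worth stressing at the outset that, unlike the convex polyomino case of Theorem~\ref{tri-con-pol}, the present characterization is \emph{not} $3$-colorability, so one cannot simply appeal to Theorem~\ref{3-col-thm}: a genuine orientation must be produced even for non-$3$-colorable instances. I would fix coordinates, writing $v_{i,j}$ for the vertex on horizontal circle $i$ and vertical line $j$, with the column index $j$ read modulo the number of sectors $s$ (this modular reading is where the cyclic wrap-around of the cylinder enters). I would orient every vertical edge and every diagonal edge from the lower circle to the upper circle, so that all of these edges are monotone in the row index $i$; such edges can create neither a directed cycle nor a shortcut among themselves.

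The delicate edges are the horizontal ones, which within each circle form a cycle $C_s$, and this is exactly where the hypothesis $s>3$ is used. For $s\geq 4$ one may orient $C_s$ acyclically with its unique source and sink placed at \emph{non-adjacent} columns (for instance columns $0$ and $2$, whose two arcs then have lengths $2$ and $s-2$); because no edge of $C_s$ joins a non-adjacent pair, $C_s$ carries no shortcut of its own, whereas for $s=3$ the circle degenerates to a triangle and this freedom is lost, which is precisely why that regime is treated separately. Acyclicity of the whole orientation then follows from the row-monotonicity of the vertical and diagonal edges together with the acyclicity chosen inside each circle.

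The remaining, and main, task is to show the orientation is \emph{shortcut-free}, and this is where the absence of $W_5$ and $W_7$ must be exploited. In a triangulation the neighbourhood (link) of an interior vertex $v$ is a cycle, so $v$ together with its link is a wheel $W_{\deg(v)}$, and it is an \emph{induced} odd wheel exactly when $\deg(v)$ is odd and the link is chordless; forbidding $W_5$ and $W_7$ therefore forces a chord into every such link and constrains the admissible local diagonal patterns around each vertex. To rule out a candidate shortcut $u\rightarrow z$ I would inspect each directed path of length at least three from $u$ to $z$: since every bounded face of $T$ is a triangle, the path together with the edge $u\rightarrow z$ spans a small induced subgraph whose triangular faces force the missing transitivity edges, so the only way transitivity can fail is to expose a chordless odd neighbourhood, i.e.\ an induced $W_5$ or $W_7$, which is excluded. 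The hardest part will be the \emph{global} part of this verification: shortcuts need not be local, and a directed path may wind partway around the cylinder, mixing horizontal, vertical and diagonal edges across several circles. Controlling these long shortcuts, and confirming that every dangerous arrangement of diagonals meeting at a vertex genuinely produces a forbidden $W_5$ or $W_7$ rather than slipping through, is the crux; I would organise it as a finite case analysis over the possible diagonal patterns within a sector, kept finite by the degree bound established in the first paragraph.
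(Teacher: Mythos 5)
Your \emph{only if} direction is sound: it is exactly the heredity argument (Remark~\ref{hereditary}) combined with the non-word-representability of the odd wheels $W_5$ and $W_7$, and your degree-$8$ observation correctly explains why no larger wheels can occur as induced subgraphs. Note that the paper states this theorem without proof (it is imported from \cite{CKS16A}), so your sufficiency argument has to stand on its own --- and it does not. The gap is not merely that you defer the shortcut-freeness verification to an unexecuted case analysis which you yourself call ``the crux''; it is that the orientation you fix (all vertical and diagonal edges pointing up, each horizontal circle oriented acyclically with a \emph{unique} source and sink at non-adjacent columns) provably admits shortcuts on $W_5$- and $W_7$-free inputs. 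Hence your central claim that ``the only way transitivity can fail is to expose an induced $W_5$ or $W_7$'' is false for this orientation, and no case analysis can rescue it.

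Concretely, take two circles and $s=4$ sectors: bottom cycle $b_0b_1b_2b_3$, top cycle $t_0t_1t_2t_3$, vertical edges $b_jt_j$, and diagonals $b_0t_1$, $b_2t_1$, $b_2t_3$, $b_0t_3$. The neighbourhood of every degree-$5$ vertex here is an induced path, so there is no induced $W_5$ (and no $W_7$, the maximum degree being $5$); indeed the graph is $3$-colorable ($b_0,b_2\mapsto 1$; $b_1,b_3,t_0,t_2\mapsto 2$; $t_1,t_3\mapsto 3$), hence word-representable by Theorem~\ref{3-col-thm}, consistent with the theorem being reviewed. Now run your scheme. If the top source/sink pair is $\{t_1,t_3\}$, say with $t_1\to t_2\to t_3$, then the path $b_0\to t_1\to t_2\to t_3$ together with the edge $b_0\to t_3$ is a shortcut: the induced subgraph on these four vertices has no further edges (since $b_0\not\sim t_2$ and $t_1\not\sim t_3$), so it is exactly a directed path plus a shortcutting edge. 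If instead the top pair is $\{t_0,t_2\}$, first observe that the bottom pair is forced to be $\{b_1,b_3\}$, since otherwise $b_0\to b_1\to b_2\to t_1$ with the edge $b_0\to t_1$ (or a mirror image of this configuration) is a shortcut, as $b_0\not\sim b_2$; but then, with say $t_0$ the top source and $b_1$ the bottom source, the path $b_1\to b_0\to t_0\to t_1$ with the edge $b_1\to t_1$ is a shortcut, since $b_1\not\sim t_0$, and the remaining placements fail symmetrically. So \emph{every} orientation of your prescribed form has a shortcut, although the graph is word-representable. A semi-transitive orientation it does admit is the one induced by the $3$-coloring, in which each circle has \emph{two} sources and \emph{two} sinks --- precisely the configuration your ``unique source and sink'' prescription excludes. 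The moral is that the circle orientations must adapt to the diagonal pattern; the actual proof in \cite{CKS16A} is a substantially more involved structural analysis, and the sufficiency half of your proposal needs to be redone from scratch.
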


 \begin{figure}
 \begin{center}
\begin{tikzpicture} [scale=0.7, line join=round, >=latex, thick]
\foreach \r in {1,1.5,2,3}
{
\draw[dashed] ([shift=(45:\r cm)]0,0) arc (45:135:\r cm);
\draw[] ([shift=(45:\r cm)]0,0) arc (45:-225:\r cm);
}
\foreach \a in {-2,-1,0,1,-3,-4,-5}
{
\draw (45*\a:1 cm)-- (45*\a:2 cm);
\draw[dashed] (45*\a:2 cm)-- (45*\a:3 cm);
 \fill[black!100] (45*\a:1 cm) circle(0.5ex)
                 (45*\a:1.5 cm) circle(0.5ex)
                 (45*\a:2 cm) circle(0.5ex)
                 (45*\a:3 cm) circle(0.5ex)
                ;
}

\draw (45:1 cm)--(0:1.5 cm)--(45:2 cm)
      (0:1.5 cm)--(-45:1 cm)
      (0:2 cm)--(-45:1.5 cm)
      ;
\draw (-45:1 cm)--(-90:1.5 cm)--(-45:2 cm)
      (-90:1.5 cm)--(-135:1 cm)
      (-90:2 cm)--(-135:1.5 cm)
      --(-180:1 cm)--(-225:1.5 cm)-- (-180:2 cm)--(-135:1.5 cm)
      ;
\end{tikzpicture}
\caption{A triangulation of a GCCG}\label{triang-grid-cover-cylinder-general}
\end{center}
\end{figure}
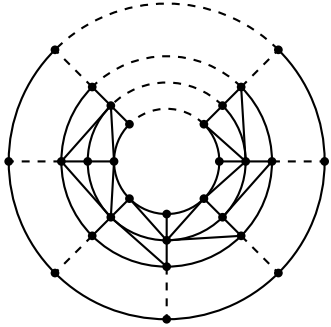

\begin{theorem}[\cite{CKS16A}]\label{thm-3} A triangulation of a GCCG with three sectors is word-representable if and only if it contains no graph in Figure~{\em \ref{non-repr-induced-subgraphs}} as an induced subgraph.\end{theorem}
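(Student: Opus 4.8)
The plan is to use the semi-transitive orientation criterion of Theorem~\ref{main-charact} throughout, reducing word-representability to the existence of an acyclic, shortcut-free orientation, and to mirror the strategy behind Theorem~\ref{thm-morethan-3} while accounting for the extra rigidity caused by the fact that, with exactly three sectors, every horizontal circle induces a triangle rather than a longer cycle. This is precisely why the clean two-obstruction list ($W_5$, $W_7$) of the many-sector case is replaced here by the larger finite family in Figure~\ref{non-repr-induced-subgraphs}.

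For the necessity direction (word-representable $\Rightarrow$ no forbidden subgraph), I would first verify, one graph at a time, that each member of Figure~\ref{non-repr-induced-subgraphs} is itself non-word-representable. By Theorem~\ref{main-charact} it suffices to show that each admits no semi-transitive orientation, i.e.\ that every acyclic orientation contains a shortcut; this is a finite check carried out by the edge-by-edge procedure described after the definition of semi-transitive orientation (for each oriented edge $u\to v$, examine the induced subgraph on the vertices lying on directed $u$--$v$ paths and confirm it cannot be made transitive). Once each forbidden graph is known to be non-word-representable, the hereditary property (Remark~\ref{hereditary}) immediately gives that any triangulation containing one of them as an induced subgraph is non-word-representable.

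For the sufficiency direction (no forbidden subgraph $\Rightarrow$ word-representable) I would construct a semi-transitive orientation explicitly. The natural decomposition is by horizontal layers: a three-sector GCCG is a stack of triangular cross-sections, so each layer between consecutive horizontal circles is a triangular prism whose three quadrilateral side faces have been triangulated in one of finitely many ways. I would fix a canonical base orientation (for example, all vertical edges directed one way and the three triangle edges of each circle oriented consistently), then complete the orientation of the diagonals using the forced local rules illustrated in Figure~\ref{completingParal}. Where $3$-colorability happens to be available one can instead appeal directly to Theorem~\ref{3-col-thm}; the role of the hypothesis that none of the graphs in Figure~\ref{non-repr-induced-subgraphs} occurs is exactly to exclude the local configurations that would otherwise force a shortcut or a directed cycle.

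The hard part will be the sufficiency direction, and within it the interaction between layers around the cyclic wrap-around. Because the three-sector case closes up on a triangle, a locally valid orientation of each prism can still combine into a directed cycle encircling the cylinder, or into a shortcut spanning several layers; controlling this requires a careful case analysis over the possible triangulation patterns of each sector together with a propagation argument showing that the forbidden-subgraph hypothesis always leaves enough freedom to orient every remaining diagonal consistently. Establishing that the list in Figure~\ref{non-repr-induced-subgraphs} is \emph{exhaustive} — that no further minimal obstruction hides in the layered structure — is the crux, and is what genuinely separates this statement from Theorem~\ref{thm-morethan-3}.
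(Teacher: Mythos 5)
First, a point of reference: the paper itself does not prove this theorem; it is stated as a citation to \cite{CKS16A}, so your proposal can only be judged against what a complete proof would have to contain, not against an argument in the text.

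Your necessity direction is sound: checking that each of the six graphs in Figure~\ref{non-repr-induced-subgraphs} admits no semi-transitive orientation is a finite computation (enumerate acyclic orientations, test each edge for a shortcut via Theorem~\ref{main-charact}), and Remark~\ref{hereditary} then transfers non-word-representability to any triangulation containing one of them as an induced subgraph. The genuine gap is in sufficiency, and you have essentially flagged it yourself without closing it. The claim that ``the forbidden-subgraph hypothesis always leaves enough freedom to orient every remaining diagonal consistently'' is not an argument --- it is a restatement of the theorem. A proof must either (a) exhibit, for every triangulation of a $3$-sector GCCG avoiding all six graphs, an explicit semi-transitive orientation, which requires classifying the possible triangulation patterns of the layers and proving that the wrap-around cycle and inter-layer diagonals never force a cycle or shortcut, or (b) show conversely that any triangulation with no semi-transitive orientation must contain one of the six configurations, which requires a structural analysis identifying where in the layered cylinder an obstruction must localize. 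You name this as ``the crux'' but supply neither the case analysis nor the propagation lemma it would rest on; as written, the sufficiency half is a plan, not a proof. A further small weakness: the fallback appeal to Theorem~\ref{3-col-thm} has very limited reach here, since a triangulation of a $3$-sector GCCG of height at least two contains $K_4$ (a triangular cross-section plus an adjacent apex-like vertex in a triangulated side face), and such graphs are never $3$-colorable, so essentially all nontrivial cases must go through the explicit-orientation route you left unexecuted.
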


\begin{figure}[!htbp]
\begin{center}
\begin{tikzpicture}[line width=.6pt,scale=0.8,dot/.style={circle,inner sep=1.5pt,fill,name=#1}]
\node[dot=L10] at (-0.5, -0.289) {} ;
\node[dot=L12] at (0, 0.577) {} ;
\node[dot=L20] at (-1, -0.577) {} ;
\node[dot=L22] at (0, 1.154) {} ;
\node[dot=L30] at (-1.5, -0.867) {} ;
\node[dot=L21] at (1.5, -0.867) {} ;
\node[dot=L32] at (0, 1.731) {} ;
\draw[] (L12)-- (L10) ;
\draw[] (L10)-- (L20) ;
\draw[] (L20)-- (L21) ;
\draw[] (L10)-- (L21) ;
\draw[] (L21)-- (L22) ;
\draw[] (L22)-- (L20) ;
\draw[] (L12)-- (L22) ;
\draw[] (L10)-- (L22) ;
\draw[] (L20)-- (L30) ;
\draw[] (L22)-- (L30) ;
\draw[] (L32)-- (L30) ;
\draw[] (L22)-- (L32) ;
\draw[] (L21)-- (L32) ;
\end{tikzpicture}
\quad
\begin{tikzpicture}[line width=.6pt,scale=0.8,dot/.style={circle,inner sep=1.5pt,fill,name=#1}]
\node[dot=L10] at (-0.5, -0.289) {} ;
\node[dot=L11] at (0.5, -0.289) {} ;
\node[dot=L12] at (0, 0.577) {} ;
\node[dot=L22] at (0, 1.154) {} ;
\node[dot=L20] at (-1.5, -0.867) {} ;
\node[dot=L21] at (1.5, -0.867) {} ;
\node[dot=L32] at (0, 1.731) {} ;
\draw[] (L10)-- (L11) ;
\draw[] (L11)-- (L12) ;
\draw[] (L12)-- (L10) ;
\draw[] (L10)-- (L20) ;
\draw[] (L12)-- (L20) ;
\draw[] (L20)-- (L21) ;
\draw[] (L11)-- (L21) ;
\draw[] (L10)-- (L21) ;
\draw[] (L21)-- (L22) ;
\draw[] (L22)-- (L20) ;
\draw[] (L12)-- (L22) ;
\draw[] (L11)-- (L22) ;
\draw[] (L22)-- (L32) ;
\draw[] (L21)-- (L32) ;
\draw[] (L20)-- (L32) ;
\end{tikzpicture}
\quad
\begin{tikzpicture}[line width=.6pt,scale=0.8,dot/.style={circle,inner sep=1.5pt,fill,name=#1}]
\node[dot=L21] at (0.5, -0.289) {} ;
\node[dot=L12] at (0, 0.577) {} ;
\node[dot=L20] at (-1, -0.577) {} ;
\node[dot=L31] at (1, -0.577) {} ;
\node[dot=L22] at (0, 1.154) {} ;
\node[dot=L30] at (-1.5, -0.867) {} ;
\node[dot=L41] at (1.5, -0.867) {} ;
\node[dot=L32] at (0, 1.731) {} ;
\draw[] (L12)-- (L20) ;
\draw[] (L20)-- (L21) ;
\draw[] (L21)-- (L22) ;
\draw[] (L22)-- (L20) ;
\draw[] (L12)-- (L22) ;
\draw[] (L20)-- (L30) ;
\draw[] (L30)-- (L31) ;
\draw[] (L21)-- (L31) ;
\draw[] (L20)-- (L31) ;
\draw[] (L31)-- (L32) ;
\draw[] (L32)-- (L30) ;
\draw[] (L22)-- (L32) ;
\draw[] (L21)-- (L32) ;
\draw[] (L20)-- (L32) ;
\draw[] (L31)-- (L41) ;
\draw[] (L32)-- (L41) ;
\end{tikzpicture}

\begin{tikzpicture}[line width=.6pt,scale=0.8,dot/.style={circle,inner sep=1.5pt,fill,name=#1}]
\node[dot=L10] at (-0.5, -0.289) {} ;
\node[dot=L22] at (0, 0.577) {} ;
\node[dot=L20] at (-1, -0.577) {} ;
\node[dot=L21] at (1, -0.577) {} ;
\node[dot=L32] at (0, 1.154) {} ;
\node[dot=L30] at (-1.5, -0.867) {} ;
\node[dot=L31] at (1.5, -0.867) {} ;
\node[dot=L42] at (0, 1.731) {} ;
\draw[] (L10)-- (L20) ;
\draw[] (L20)-- (L21) ;
\draw[] (L10)-- (L21) ;
\draw[] (L21)-- (L22) ;
\draw[] (L22)-- (L20) ;
\draw[] (L10)-- (L22) ;
\draw[] (L20)-- (L30) ;
\draw[] (L30)-- (L31) ;
\draw[] (L21)-- (L31) ;
\draw[] (L20)-- (L31) ;
\draw[] (L31)-- (L32) ;
\draw[] (L32)-- (L30) ;
\draw[] (L22)-- (L32) ;
\draw[] (L21)-- (L32) ;
\draw[] (L20)-- (L32) ;
\draw[] (L32)-- (L42) ;
\draw[] (L30)-- (L42) ;
\end{tikzpicture}
\quad
\begin{tikzpicture}[line width=.6pt,scale=0.8,dot/.style={circle,inner sep=1.5pt,fill,name=#1}]
\node[dot=L10] at (-0.5, -0.289) {} ;
\node[dot=L11] at (0.5, -0.289) {} ;
\node[dot=L20] at (-1, -0.577) {} ;
\node[dot=L21] at (1, -0.577) {} ;
\node[dot=L22] at (0, 1.154) {} ;
\node[dot=L30] at (-1.5, -0.867) {} ;
\node[dot=L31] at (1.5, -0.867) {} ;
\node[dot=L32] at (0, 1.731) {} ;
\draw[] (L10)-- (L11) ;
\draw[] (L10)-- (L20) ;
\draw[] (L20)-- (L21) ;
\draw[] (L11)-- (L21) ;
\draw[] (L10)-- (L21) ;
\draw[] (L21)-- (L22) ;
\draw[] (L22)-- (L20) ;
\draw[] (L11)-- (L22) ;
\draw[] (L10)-- (L22) ;
\draw[] (L20)-- (L30) ;
\draw[] (L21)-- (L30) ;
\draw[] (L30)-- (L31) ;
\draw[] (L21)-- (L31) ;
\draw[] (L31)-- (L32) ;
\draw[] (L32)-- (L30) ;
\draw[] (L22)-- (L32) ;
\draw[] (L21)-- (L32) ;
\draw[] (L20)-- (L32) ;
\end{tikzpicture}
\quad
\begin{tikzpicture}[line width=.6pt,scale=0.8,dot/.style={circle,inner sep=1.5pt,fill,name=#1}]
\node[dot=L11] at (0.5, -0.289) {} ;
\node[dot=L12] at (0, 0.577) {} ;
\node[dot=L20] at (-1, -0.577) {} ;
\node[dot=L21] at (1, -0.577) {} ;
\node[dot=L22] at (0, 1.154) {} ;
\node[dot=L40] at (-1.5, -0.867) {} ;
\node[dot=L31] at (1.5, -0.867) {} ;
\node[dot=L32] at (0, 1.731) {} ;
\draw[] (L11)-- (L12) ;
\draw[] (L12)-- (L20) ;
\draw[] (L20)-- (L21) ;
\draw[] (L11)-- (L21) ;
\draw[] (L21)-- (L22) ;
\draw[] (L22)-- (L20) ;
\draw[] (L12)-- (L22) ;
\draw[] (L11)-- (L22) ;
\draw[] (L21)-- (L31) ;
\draw[] (L20)-- (L31) ;
\draw[] (L31)-- (L32) ;
\draw[] (L22)-- (L32) ;
\draw[] (L21)-- (L32) ;
\draw[] (L20)-- (L32) ;
\draw[] (L31)-- (L40) ;
\draw[] (L32)-- (L40) ;

\end{tikzpicture}
\caption{All minimal non-word-representable induced subgraphs in triangulations of GCCG's with three sectors}\label{non-repr-induced-subgraphs}
\end{center}
\end{figure}

\subsubsection{Subdivisions of Triangular Grid Graphs}

The {\em triangular tiling graph} $T^{\infty}$  is the {\em Archimedean tiling} $3^6$ (see Figure~\ref{Tinfty}). By a {\it triangular grid graph} $G$ we mean a graph obtained from $T^{\infty}$ as follows. Specify a finite number of triangles, called {\em cells}, in $T^{\infty}$. The edges of $G$ are then all the edges surrounding the specified cells, while the vertices of $G$ are the endpoints of the edges (defined by intersecting lines in $T^{\infty}$). We say that the specified cells, along with any other cell whose all edges are from $G$, {\em belong} to $G$.

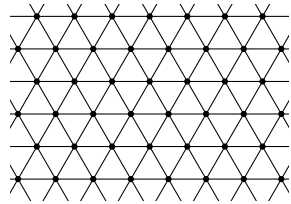
\begin{figure}
\begin{center}
\begin{tikzpicture}[scale=0.5]
\clip (0.3,0.3) rectangle (7.7,5.5);
\foreach \x in {-10,...,10} 
 \foreach \y in {-10,...,10}
 { \fill[black!100] (\x + .5* \y, 0.866*\y) circle(0.5ex);
 }
\foreach \x in {-10,...,10} 
{ \draw (\x,0)-- +(10,17.32);
  \draw (\x,0)-- +(-10,17.32);
}
\foreach \y in {-10,...,10} 
{ \draw (-1,0.866*\y)-- +(10,0);

}
\end{tikzpicture}
\caption{\label{Tinfty} A fragment of the graph  $T^{\infty}$}
\end{center}
\end{figure}

The operation of {\it face subdivision of a cell} is putting a new vertex inside the cell and making it to be adjacent to every vertex of the cell. Equivalently, face subdivision of a cell is replacing the cell (which is the complete graph $K_3$) by a plane version of the complete graph $K_4$.
A {\it face subdivision of a set $S$ of cells} of a triangular grid graph $G$ is a graph obtained from $G$ by subdividing each cell in $S$. The set $S$ of subdivided cells is called a {\it subdivided set}. For example, Figure~\ref{subdiv} shows $K_4$, the face subdivision of a cell, and $A'$, a face subdivision of $A$.

If a face subdivision of $G$ results in a word-representable graph, then the face subdivision is called a {\em word-representable face subdivision}. Also, we say that a word-representable face subdivision of a triangular grid graph $G$ is {\em maximal}
if subdividing any other cell results in a non-word-representable graph.

\newsavebox{\Triangle}
\savebox{\Triangle}
{\begin{tikzpicture}[scale=0.5]
  \draw(0,0)--++(0.5,0.866)-- ++(0.5,-0.866)-- ++(-1,0);
  \fill[black!100] (0,0) circle(0.3ex)
                 ++(0.5,0.866)circle(0.3ex)
                 ++(0.5,-0.866)circle(0.3ex);
 \end{tikzpicture}
}
\newsavebox{\Aa}
\savebox{\Aa}
{\begin{tikzpicture}[scale=0.5]
  \draw (0,0)-- ++(0.5,0.866)-- ++(0.5,0.866)-- ++(0.5,-0.866)-- ++(0.5,-0.866)-- ++(-1,0) 
             -- ++(0.5,0.866)-- ++(-1,0)-- ++(0.5,-0.866)-- ++(-1,0);
  \fill[black!100] (0,0) circle(0.3ex)
                ++(0.5,0.866) circle(0.4ex)
                ++(0.5,0.866) circle(0.4ex)
                ++(0.5,-0.866) circle(0.4ex)
                ++(0.5,-0.866) circle(0.4ex)
                ++(-1,0) circle(0.3ex) ;
 \end{tikzpicture}
}

\begin{figure}
 \begin{center}
\begin{tikzpicture}[scale=0.5]
\clip (0.5,0.5) rectangle (10.5,4.5);
\foreach \x in {-10,...,15} 
 \foreach \y in {-10,...,15}
 { \fill[gray!100] (\x + .5* \y, 0.866*\y) circle(0.3ex);
 }
\foreach \x in {-10,...,15} 
{ \draw[gray,very thin] (\x,0)-- +(10,17.32);
  \draw[gray,very thin] (\x,0)-- +(-10,17.32);
}
\foreach \y in {-10,...,15} 
{ \draw[gray,very thin] (-1,0.866*\y)-- +(15,0);
}

\draw (2,1.732)  node[anchor=south west,inner sep=-0.3pt,outer sep=-0.3pt] {\usebox{\Triangle}}
      --++(0.5,0.2887)
      --++(0.5,-0.2887)
      ++(-0.5,0.2887)--+(0,0.5774);
\fill[black!100] (2,1.732)+(0.5,0.2887) circle(0.3ex);
\draw (5,1.732)  node[anchor=south west,inner sep=-0.3pt,outer sep=-0.3pt] {\usebox{\Aa}}
      +(3,0)  node[anchor=south west,inner sep=-0.3pt,outer sep=-0.3pt]  {\usebox{\Aa}};
\draw (8,1.732)--++(0.5,0.2887)--++(0.5,-0.2887)++(-0.5,0.2887)--++(0,0.5774)
      --++(0.5,-0.2887)--++(0.5,0.2887)++(-0.5,-0.2887)--++(0,-0.5774);
\fill[black!100] (8,1.732)+(0.5,0.2887) circle(0.3ex)
                 +(1,0.5774) circle(0.3ex);
\path (2.5,1) node {$K_4$}
      ++(3.5,0) node {$A$}
      ++(3,0) node {$A'$};
\end{tikzpicture}
\caption{\label{subdiv} Examples of face subdivisions: $K_4$ is the face subdivision of a cell, and $A'$ is a face subdivision of $A$}
\end{center}
\end{figure}
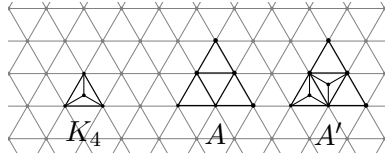

An edge of a triangular grid graph $G$ shared with a cell in $T^{\infty}$ that does not belong to $G$ is called a {\em boundary edge}.  
A cell in $G$ that is incident to at least one boundary edge is called a {\em boundary cell}. A non-boundary cell in $G$ is called an {\em interior cell}.
For example, the boundary edges in the graphs $H$ and $K$ in Figure~\ref{BoundaryEdge} are in bold.

 \newsavebox{\Abig}
 \sbox{\Abig}
 {\begin{tikzpicture}[scale=1]
  \draw[thin](0,0)--++(0.5,0.866)-- ++(0.5,-0.866)-- ++(-1,0);
  \fill[black!100] (0,0) circle(0.3ex)
                 ++(0.5,0.866)circle(0.3ex)
                 ++(0.5,-0.866)circle(0.3ex);
 \end{tikzpicture}
 }
 \newsavebox{\Vbig}
 \sbox{\Vbig}
 {\begin{tikzpicture}[scale=1]
  \draw[thin](0,0)--++(0.5,0.866)-- ++(-1,0)-- ++(0.5,-0.866);
  \fill[black!100] (0,0) circle(0.3ex)
                 ++(0.5,0.866)circle(0.3ex)
                 ++(-1,0)circle(0.3ex);
 \end{tikzpicture}
 }

\begin{figure}
 \begin{center}
\begin{tikzpicture}
\clip (-0.5,0.3) rectangle (7.5,4);
\foreach \x in {-10,...,10} 
 \foreach \y in {-10,...,10}
 { \fill[gray!100] (\x + .5* \y, 0.866*\y) circle(0.3ex);
 }
\foreach \x in {-10,...,10} 
{ \draw[gray,very thin] (\x,0)-- +(10,17.32);
  \draw[gray,very thin] (\x,0)-- +(-10,17.32);
}
\foreach \y in {-10,...,10} 
{ \draw[gray,very thin] (-1,0.866*\y)-- +(10,0);
}
\draw (1,1.732)  node[anchor=south west,inner sep=-0.6pt,outer sep=-0.6pt] {\usebox{\Abig}}
      ++(0.5,0)  node[anchor=south west,inner sep=-0.6pt,outer sep=-0.6pt] {\usebox{\Vbig}}
      ++(0,0.866)  node[anchor=south west,inner sep=-0.6pt,outer sep=-0.6pt] {\usebox{\Abig}}
      ++(-0.5,0)  node[anchor=south west,inner sep=-0.6pt,outer sep=-0.6pt] {\usebox{\Vbig}}
      ++(-0.5,0)  node[anchor=south west,inner sep=-0.6pt,outer sep=-0.6pt] {\usebox{\Abig}}
      ++(0,-0.866)  node[anchor=south west,inner sep=-0.6pt,outer sep=-0.6pt] {\usebox{\Vbig}};
\draw[very thick] (1,1.732) node[left]{$1$}--++(1,0)node[right]{$2$}--++(0.5,0.866)node[right]{$3$}--++(-0.5,0.866) node[right]{$4$}
               --++(-1,0)node[left]{$5$}--++(-0.5,-0.866)node[left]{$6$}--++(0.5,-0.866)+(0.3,0.7) node{$7$};

\draw (3+0.5,1.732)  node[anchor=south west,inner sep=-0.6pt,outer sep=-0.6pt] {\usebox{\Vbig}}
      ++(0.5,-0.866)  node[anchor=south west,inner sep=-0.6pt,outer sep=-0.6pt] {\usebox{\Vbig}}
      ++(1,0.866)  node[anchor=south west,inner sep=-0.6pt,outer sep=-0.6pt] {\usebox{\Abig}}
      ++(-0.5,0.866)  node[anchor=south west,inner sep=-0.6pt,outer sep=-0.6pt] {\usebox{\Abig}}
      ++(-1,0)  node[anchor=south west,inner sep=-0.6pt,outer sep=-0.6pt] {\usebox{\Abig}}
      ++(0,-0.866)  node[anchor=south west,inner sep=-0.6pt,outer sep=-0.6pt] {\usebox{\Vbig}};

\draw[very thick] (4,1.732) node[left]{$1$}--++(0.5,-0.866)node[left]{$2$}--++(0.5,0.866)node[below]{$3$}--++(1,0)node[right]{$4$}
                 --++(-0.5,0.866)node[right]{$5$}--++(-0.5,0.866)node[right]{$6$}
                 --++(-0.5,-0.866) node[below]{$7$}--++(-0.5,0.866)node[right]{$8$}--++(-0.5,-0.866) node[left]{$9$}
                 --++(0.5,-0.866);
\draw[very thick] (4,1.732)--++(1,0)--++(0.5,0.866)--++(-1,0)--++(-0.5,-0.866);
\draw(1.5,1.2) node[below] {$H$} +(3.7,0) node[below] {$K$} ;

\end{tikzpicture}

\caption{\label{BoundaryEdge} Graphs $H$ and $K$, where boundary edges are in bold}
\end{center}
\end{figure}
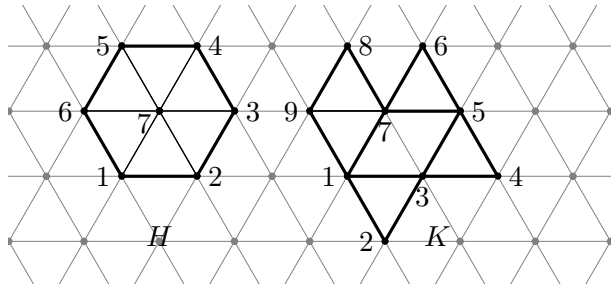

A face subdivision of a triangular grid graph that involves face subdivision of just boundary cells is called a {\it boundary face subdivision}.  The following theorem was proved using the notion of a {\em smart} orientation (see \cite{CKS16} for details).

\begin{theorem}[\cite{CKS16}]\label{subdivtrigrid}
A face subdivision of a triangular grid graph $G$ is word-representable if and only if it has no induced subgraph
 isomorphic to $A''$ in Figure~\ref{nonRepTri}, that is, $G$ has no subdivided interior cell.
\end{theorem}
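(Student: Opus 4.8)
The plan is to prove the two implications separately, treating the combinatorial heart of the problem — the non-word-representability of the forbidden configuration — and the constructive direction in turn.

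First I would pin down the graph $A''$. Subdividing an interior cell replaces a central triangle, say on vertices $u_1,u_2,u_3$, by a new apex $v$ adjacent to $u_1,u_2,u_3$; since the cell is interior, the three cells sharing its edges $u_1u_2,u_2u_3,u_3u_1$ all belong to $G$ and contribute third vertices $w_{12},w_{23},w_{31}$, each adjacent to the two endpoints of the corresponding edge. The induced subgraph on $\{u_1,u_2,u_3,v,w_{12},w_{23},w_{31}\}$ is precisely $A''$, independently of whether the neighbouring cells are themselves subdivided (their apices lie outside this vertex set). For the necessity direction (word-representable $\Rightarrow$ no subdivided interior cell) it then suffices, by the hereditary property (Remark~\ref{hereditary}), to show that $A''$ itself is not word-representable, and by Theorem~\ref{main-charact} this amounts to showing that $A''$ admits no semi-transitive orientation. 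I would argue this by a bounded case analysis: the four vertices $u_1,u_2,u_3,v$ induce a $K_4$, so in any acyclic orientation they are linearly ordered; fixing this order (up to the symmetries of $A''$ permuting $u_1,u_2,u_3$ and reversing all edges) and then ranging over the three acyclic orientations of each degree-two vertex $w_{ij}$ relative to its triangle, one checks that in every case some directed path of length three between two adjacent vertices is forced whose induced subgraph is non-transitive, i.e.\ a shortcut. Because $A''$ is fixed and small, this is a finite verification rather than a genuine difficulty.

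For the sufficiency direction I would build an explicit semi-transitive (smart) orientation under the hypothesis that every subdivided cell is a boundary cell. Start from the canonical proper $3$-colouring of the triangular tiling $T^{\infty}$ (colours assigned by a linear function of the lattice coordinates modulo $3$, so that every cell receives all three colours) and orient each edge from the smaller to the larger colour; as in the proof of Theorem~\ref{3-col-thm}, the longest directed path then has only three vertices, so the restriction of this orientation to the unsubdivided grid graph is acyclic and shortcut-free. I would then extend the orientation across each subdivided boundary cell by orienting its apex as a source (or sink) of the cell's triangle, the choice being dictated ``smartly'' by the global colour order so as not to introduce cycles. The crucial point is that the only candidate shortcuts created by an apex are directed paths that wrap from one vertex of the subdivided triangle around through a neighbouring cell back to another vertex of that triangle; for a boundary cell one of the three adjacent cells is absent (this is exactly what the boundary edge records), so the vertex needed to close such a path does not exist and no shortcut is formed — mirroring the fact that deleting any one of $w_{12},w_{23},w_{31}$ from $A''$ restores word-representability.

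The main obstacle is the sufficiency direction. Unlike necessity, which reduces to a single finite check, here one must produce a coherent orientation simultaneously for an \emph{arbitrarily large} grid carrying an \emph{arbitrary} family of subdivided boundary cells, and must rule out shortcuts globally — a priori a shortcut could be a long directed path spanning many cells rather than a local one. Making the ``smart'' extension well-defined everywhere and proving that no such long shortcut can arise (so that shortcut-freeness can be certified locally, cell by cell) is where the real work lies; this is precisely the role of the smart-orientation machinery developed in \cite{CKS16}.
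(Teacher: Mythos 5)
Your necessity argument is sound and matches how this direction is actually handled: a subdivided interior cell induces exactly $A''$ (the apices of neighbouring cells, subdivided or not, lie outside the seven chosen vertices), heredity (Remark~\ref{hereditary}) reduces everything to $A''$ itself, and by Theorem~\ref{main-charact} one is left with a finite check that $A''$ has no semi-transitive orientation. That check does work as you describe: fixing the linear order on the $K_4$ $\{u_1,u_2,u_3,v\}$, whichever of its three acyclic placements a suitable $w_{ij}$ takes, a directed path of length three with a shortcutting edge and a missing chord appears (e.g.\ with $v$ the source and $u_1\to u_2\to u_3$, all three placements of the vertex $w$ adjacent to $u_1,u_3$ give shortcuts; with $v$ second in the order, all three placements of the vertex adjacent to $u_1,u_2$ do).

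The gap is in your sufficiency construction. You orient each apex of a subdivided boundary cell as a source or sink of its triangle and claim that the absence of \emph{some} neighbouring cell kills all candidate shortcuts. That is false: the shortcut created by a source/sink apex runs specifically through the neighbour across the edge joining the colour-$0$ and colour-$2$ vertices of the cell. Concretely, if the cell $\{c_0,c_1,c_2\}$ (colours $0,1,2$, base orientation $c_0\to c_1\to c_2$, $c_0\to c_2$) has apex $v$ oriented as a source, and the cell $\{c_0,y,c_2\}$ across the edge $c_0c_2$ belongs to $G$, then $v\to c_0\to y\to c_2$ together with the edge $v\to c_2$ is a shortcut, since $v\not\sim y$; a sink apex gives the mirror-image shortcut $c_0\to y\to c_2\to v$ against the edge $c_0\to v$. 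So a boundary cell whose missing neighbours lie only across its $0$--$1$ or $1$--$2$ edges defeats the construction, and this cannot be repaired by choosing the colouring cleverly: take the graph $A$ of Figure~\ref{subdiv} with all three corner cells subdivided (word-representable by the theorem, as only the central cell is interior). In any proper $3$-colouring of $A$ exactly one midpoint gets the middle colour $1$, and the corner cell \emph{not} containing that midpoint then has its $0$--$2$ edge facing the central cell, which belongs to $G$; hence for every colouring and every source/sink choice some apex produces a shortcut. The actual content of the smart orientations of \cite{CKS16} is precisely the extra freedom you discarded: the apex may be inserted at any of the four positions of the linear order $c_0<c_1<c_2$, and inserting it between $c_i$ and $c_j$ (with source/sink counting as the position flanking the pair $c_0,c_2$) is dangerous exactly when the neighbouring cell across the edge $c_ic_j$ belongs to $G$. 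For a boundary cell one therefore places the apex against a boundary edge; with that correction your local analysis does go through, and one can also dispose of your worry about long shortcuts, since colours increase along any directed path, forcing every potential shortcut to be one of these local configurations.
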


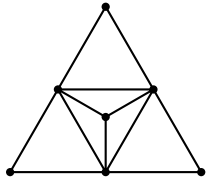
\begin{figure}
\begin{center}
\begin{tikzpicture}[scale=1.1]
\draw[thick] (0,0) node[anchor=east]{}-- ++(0.5774,1) node[anchor=east] {}--++(0.5774,1) node[anchor=west] {}
                   --++(0.5774,-1) node[anchor=west] {}--++(0.5774,-1) node[anchor=west] {}
                   --++(-2*0.5774,0)--++(-0.5774,1)--++(2*0.5774,0)
                   --++(-0.5774,-1)--+(-2*0.5774,0)
                   (2*0.5774,0)--++(0,2/3) --+(-0.5774,1/3)
                   (2*0.5774,2/3)--+(0.5774,1/3) ;
\path (1.2,0.14) node [anchor=west]{}
     +(0,0.45) node [anchor= west]{};

\fill[black!100] (0,0) circle(0.3ex)
                 ++(0.5774,1) circle(0.3ex)
                 ++(0.5774,1) circle(0.3ex)
                 ++(0.5774,-1) circle(0.3ex)
                 ++(0.5774,-1) circle(0.3ex)
                   (2*0.5774,0) circle(0.3ex)
                 ++(0,2/3) circle(0.3ex);
\end{tikzpicture}
\caption{\label{nonRepTri} The graph $A''$}
\end{center}
\end{figure}

Theorem~\ref{subdivtrigrid} can be applied to the {\em two-dimensional Sierpi\'{n}ski gasket graph $SG(n)$} to find its maximum word-representable subdivision (see \cite{CKS16} for details).

\section{Directions for Further Research}\label{sec6} 

In this section we list some of open problems and directions for further research related to word-representable graphs. The first question though the Reader should ask himself/herself is ``Which graphs in their favourite class of graphs are word-representable?''.

\begin{itemize}
\item Characterize (non-)word-representable planar graphs.
\item Characterize word-representable near-triangulations (containing $K_4$). 
\item Describe graphs representable by words avoiding a pattern $\tau$, where the notion of a ``pattern'' can be specified in any suitable way, e.g. it could be a {\em classical pattern}, a {\em vincular pattern}, or a {\em bivincular pattern} (see \cite{K11} for definitions).
\item Is it true that out of all bipartite graphs on the same number of vertices, {\em crown graphs} require the {\em longest} word-representants?
\item Are there any graphs on $n$ vertices whose representation requires {\em more} than $\lfloor n/2 \rfloor$ copies of each letter?
\item Is the {\em line graph} of a non-word-representable graph {\em always} non-word-representable?
\item Characterize word-representable graphs in terms of {\em forbidden subgraphs}.  
\item Translate a known to you problem on {\em graphs} to {\em words} representing these graphs (assuming such words exist), and find an {\em efficient algorithm} to solve the {\em obtained problem}, and thus the {\em original problem}.  
\end{itemize}

The last two problems are of  fundamental importance.

\end{document}